\newcommand{\RR}{\mathbb{R}}
\newcommand{\SYSTEMWR}{(\ref{eq:general_system})-(\ref{eq:reset_condition})}
\newcommand{\SYSTEMWRLinear}{(\ref{eq:general_system})-(\ref{eq:reset_condition})}
\newcommand{\s}{\mathfrak{s}}
\newcommand{\ts}{\tilde{\s}}
\newcommand{\tS}{\tilde{S}}
\newcommand{\tp}{\tilde{\phi}}
\newcommand{\tpsi}{\tilde{\psi}}
\newcommand{\T}{\mathbb{T}}
\newcommand{\Tt}{\T_T}
\newcommand{\tz}{\tilde{z}}
\newcommand{\bx}{\bar{x}}
\newcommand{\R}{{\mathcal{R}}}
\newcommand{\LL}{{\mathcal{L}}}
\newcommand{\tx}{\tilde{x}}
\newcommand{\conds}{H.1--H.2}
\newtheorem{lem}{Lemma}[section]
\newtheorem{remark}{Remark}[section]
\newtheorem{prop}{Proposition}[section]
\newtheorem{definition}{Definition}[section]
\newtheorem{theorem}{Theorem}[section]
\numberwithin{equation}{section}
\begin{document}
\title{Border collision bifurcations of stroboscopic
maps in periodically driven spiking models\thanks{This work has been financially
supported by the Large Scale Initiative Action REGATE,  by the Spanish
MINECO-FEDER Grants MTM2009-06973, MTM2012-31714 and the Catalan Grant
2009SGR859}}

\author{A. Granados \and M. Krupa \and F. Cl\'ement}
\date{}
\maketitle
\begin{abstract}
In this work we consider a general non-autonomous hybrid system based on the
integrate-and-fire model, widely used as simplified version of neuronal models
and other types of excitable systems. Our unique assumption is that the system
is monotonic, possesses an attracting subthreshold equilibrium point and is
forced by means of periodic pulsatile (square wave) function.\\
In contrast to classical methods, in our approach we use the stroboscopic map
(time-$T$ return map) instead of the so-called firing-map. It becomes a
discontinuous map potentially defined in an infinite number of partitions. By
applying theory for piecewise-smooth systems, we avoid relying on particular
computations and we develop a novel approach that can be easily extended to
systems with other topologies (expansive dynamics) and higher dimensions.\\
More precisely, we rigorously study the bifurcation structure in the
two-dimensional parameter space formed by the amplitude and the duty cycle of
the pulse. We show that it is covered by regions of existence of periodic
orbits given by period adding structures. They do not only completely describe
all the possible spiking asymptotic dynamics but also the behavior of the firing
rate, which is a devil's staircase as a function of the parameters.
\end{abstract}
\section{Introduction}
In the context of neuronal modeling (and similarly for hormone release), one
relies on systems that are able to exhibit large-amplitude responses under
certain stimuli.  An example of such behavior is an \emph{action potential} (AP)
or \emph{spike} of a neuron.
There exist accurate but quite complex models of APs, e.g. the Hudgkin-Huxley
equations~\cite{HodHux52}. Simpler models are the Morris-Lecar~\cite{MorLec81}
or the FitzHugh and Nagumo~\cite{FitHug61,NagAriYos62}. All these models use the
principle of excitability, and often exhibit slow/fast dynamics.\\
Hybrid systems with resets are used as approximate models of excitability, with
the discontinuity, that is the reset, used to mimic the spike.  In many contexts
a spike occurs when the system receives a sufficient amount of stimulus.\\
One can distinguish between different types of systems with resets. The simplest
are \emph{integrate-and-fire} models, for which the differential equation
exhibits very simple dynamics (it is constant or linear). Other models include
more complicated subthreshold dynamics including instabilities (which introduce
expanding dynamics) in order to provide more realistic representations of the
underlying phenomena. This is the case of the so-called Izhikevich
model~\cite{Izh03}. Other approaches~\cite{BreGer05,TouBre09,MenHugRin12} let
the threshold depend on the state variable (thus increasing the dimension of the
system) in order to obtain a more accurate model with possibly more complex
properties, as is the case of type III neurons~\cite{ClaPayFor08,Izh07}.

Due to the discontinuity in the trajectories given by the reset, one cannot
directly apply classical theory for smooth systems in order to study these type
of excitable systems. A typical approach to overcome this problem is to consider
the so-called \emph{adaption}, \emph{firing} or \emph{impact} map, which is a
Poincar\'e map onto the
threshold~\cite{KeeHopRin81,CooBre99,CooThuWed12,TouBre09,JimMihBroNieRub13,DipKruTorGut12};
that is, one takes an initial condition on the threshold (which leads to a
reset) and one then integrates the system until the threshold is reached again.
This is a well known technique also in other disciplines involving
piecewise-smooth dynamics (see~\cite{BerBudChaKow08} for examples); this map
becomes the composition of smooth maps and hence provides a regular version of
the system which can be studied by means of classical tools for smooth
systems.\\
Note that, in order to compute the firing map, one needs to know the time needed
by the system to perform the next spike (firing times). Even assuming linearity,
these times need to be computed numerically, as they involve transcendental
equations.  Hence, one also relies on numerical computations in order to derive
properties of the firing map.

When introducing a $T$-periodic forcing to the system (a periodic current in the
neuronal context), the firing map must include time as a variable. In fact, for
the one-dimensional case, it becomes a map that returns the next firing time.
Hence, in order to study the existence of periodic orbits, explicit knowledge of
the behaviour of these times becomes crucial, as one needs to check whether the
difference between consecutive spiking times is congruent (rational multiple)
with $T$. Results in the context of firing maps have been obtained in specific
cases. For example, it was shown in~\cite{KeeHopRin81} that, for a linear
system, by using the explicit expression for the flow, one can approximate the
firing map by a certain a circle map and derive information about rotation and
firing numbers of existing periodic orbits. By combining this same technique
with numerical simulations, it was shown in~\cite{CooOweSmi01} that one obtains
similar results when introducing a periodic forcing to a two-dimensional
integrate-and-fire-or-bursting system. There authors studied  in more detail the
existence of periodic orbits (mode locking) and showed numerically that the
firing number may follow a devil's staircase as a function of parameters.

In this work we present a new approach to study periodically forced systems with
resets based on the use of the so-called stroboscopic map (time-$T$ return map).
One of the advantages of our approach is that it leads to a general setting for
periodically forced systems.  Already in the smooth case such systems are better
understood by means of the stroboscopic map than with Poincar\'e maps onto
transverse sections in the state space. In the latter, as mentioned above, one
has to check for congruency between the passage times and the period of the
forcing, which requires explicit knowledge of the solutions of the system or
approximations by circle maps in order to study the existence of periodic
orbits. However, when dealing the with stroboscopic map, this is reduced to the
study of existence of fixed (or periodic) points of this map, which comes by
applications of classical results as the implicit function theorem (when
studying perturbations) or Brouwer's theorem.\\
The use of the stroboscopic maps has been avoided in systems with resets because
it becomes a discontinuous (piecewise-smooth) map. The sets of initial condition
for which this map becomes smooth are identified by the number of spikes
exhibited by their trajectories when flowed for a time $T$, and hence it is
potentially defined in an arbitrarily large number of partitions. However, in
this work we use recent results in non-smooth systems to face these
discontinuities and rigorously study a general and large class of hybrid systems
with resets.\\
More precisely, we consider a one-dimensional monotonic system with an
attracting subthreshold equilibrium point under a $T$-periodic forcing
consisting of a periodic square wave function. This is given by a pulse of
amplitude $A$ and duration $dT$, with $0\le d\le1$ (duty cycle), and is $0$ for
the rest of the period $T$. Such a periodic stimulus is used in  many contexts,
as the stimulation of an excitable cell (e.g., an hormone release) frequently
occurs in a pulsatile form. Such a periodic forcing adds indeed an extra
non-smoothness, as one switches from one autonomous system to another one at the
switching times, when the pulse is switched on or off.  However, this
singularity consists of a discontinuity on the field that does not add nor
remove extra dynamical objects nor bifurcations.  As usual when dealing with
piecewise-smooth fields, in this case the solutions of the system are continuous
and obtained by proper concatenation of the solutions of the both fields, as
long as the threshold is not reached.\\
Our results consist of a rigorous and detailed description of the bifurcation
scenario formed by the parameters describing the input pulse: its amplitude $A$
and its duty cycle $d$. We show that there exists an infinite number of regions
in this two-dimensional parameter space for which the stroboscopic map exhibits
a fixed point. The regions in between are covered by regions of existence
periodic orbits following the so-called period adding structure. The rotation
number associated with these periodic orbits is a devil's staircase as a
function of the parameters.  This allows us to show that the asymptotic firing
rate of the system also follows a devil's staircase when the mentioned
parameters are varied.\\

\noindent This work is organized as follows.\\
In section~\ref{sec:results} we describe the system, announce our results and
describe the state of the art in piecewise-smooth maps relevant for our work.
In section~\ref{sec:proof_of_results} we prove our results, and in
section~\ref{sec:examples} we validate them by providing numerical computations
for three examples.

\section{Background and results}\label{sec:results}
Let us consider a non-autonomous periodic system given by
\begin{equation}
\dot{x}=f(x)+I(t),\,x\in\RR
\label{eq:general_system}
\end{equation}
with $f(x)\in C^\infty(\RR)$ and $I(t)$ a $T$-periodic square-wave function
\begin{equation}
I(t)=\left\{
\begin{aligned}
&A&&\text{if }t\in\left(nT,nT+dT\right]\\
&0&&\text{if }t\in(nT+dT,(n+1)T],\\
\end{aligned}\right.
\label{eq:forcing}
\end{equation}
with $0\le d\le 1$. Let us submit system~\eqref{eq:general_system} to the reset
condition
\begin{equation}
x=\theta \longrightarrow x=0;
\label{eq:reset_condition}
\end{equation}
that is, the trajectories of system~\eqref{eq:general_system} are
instantaneously reset to $0$ whenever they reach the threshold given by
$x=\theta$. This provides a new system which is in the class of \emph{hybrid}
systems, as it combines an algebraic condition with a differential equation. Due
to the generality of the function $f$, this system represents a large class of the
\emph{spiking} models which are used as simplification of slow-fast systems
modeling excitable cells as neurons. Hence, we will refer to the discontinuities
given by the reset condition as spikes.\\

Let us assume that the system
\begin{equation}
\dot{x}=f(x)
\label{eq:autonomous_system}
\end{equation}
satisfies the following conditions
\begin{enumerate}[H.1]
\item it possesses an attracting equilibrium point
\begin{equation}
0<\bx<\theta,
\label{eq:critical_point}
\end{equation}
\item $f(x)$ is monotonic decreasing function in $[0,\theta]$:
\begin{equation*}
f'(x)<0,\;0\le x\le \theta.
\end{equation*}
\end{enumerate}

\unitlength=\textwidth
\begin{figure}
\begin{center}
\begin{picture}(1,0.5)
\put(0,0.35){
\subfigure[\label{fig:d-invA_generic_regions}]
{\includegraphics[angle=-90,width=0.5\textwidth]{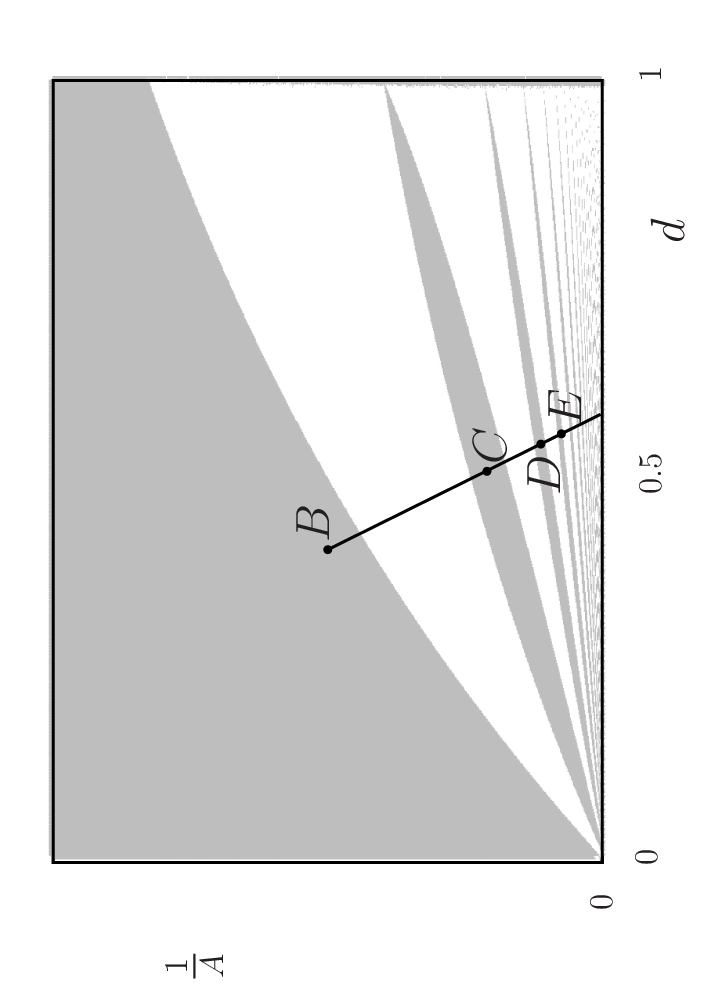}}
}
\put(0.5,0.35){
\subfigure[\label{fig:d-invA_generic_1dscann}]
{\includegraphics[angle=-90,width=0.5\textwidth]{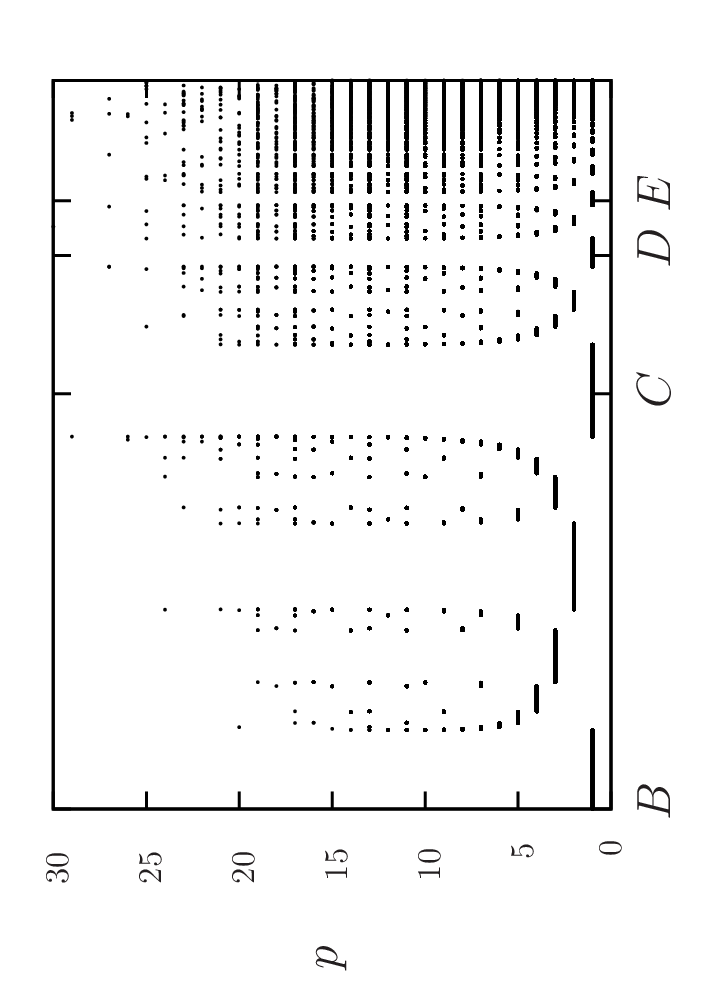}}
}
\end{picture}
\end{center}
\caption{(a) Bifurcation scenario for system~\SYSTEMWR{.} In gray regions there exist
$T$-periodic orbits, in white ones higher periodic orbits following and adding
structure (see text). In $B$, $C$, $D$ and $E$ one finds $T$-periodic orbits
spiking $0$, $1$, $2$ and $3$ times per period, respectively. (b) periods of the
periodic orbits found along the line shown in (a).}
\label{fig:d-invA_generic}
\end{figure}
The reset condition~\eqref{eq:reset_condition}, in combination with
conditions~\conds{}, implies that the dynamics we are interested in is located in
the interval $[0,\theta)$. In fact, by identifying $0\sim \theta$, this interval
can be seen as a circle. However, for our convenience, we will consider from now
on that the state space will be given by the interval $[0,\theta)$.\\
Note that the fact that the equilibrium point $\bx$ is globally attracting in
$[0,\theta)$ prevents the system to exhibit spikes without the input. It can
hence be considered as the limit of a slow/fast system.\\

One of the most relevant features of the asymptotic dynamics of
system~\SYSTEMWR{} is the \emph{firing rate}, i.e. the average number of spikes exhibited
by the system per unit time:
\begin{equation}\label{eq:defirate}
r(x_0)=\lim_{\tau\to\infty}\frac{\#(\mbox{spikes performed by } \phi(t;x_0)
\mbox{ for } t\in[0,\tau])}{\tau},
\end{equation}
where $\phi(t;x_0)$ is a trajectory of the system with $\phi(0;x_0)$.  When the
system possesses an attracting periodic orbit, this quantity does not depend on
the initial condition $x_0$ and becomes the number of spikes performed by this
periodic orbit along one period divided by the length of the period.\\
Hence, to obtain qualitative and quantitative description of the firing rate we
study the existence of periodic orbits for system~\SYSTEMWR{} and their
bifurcations under conditions~\conds{}. In particular, we provide a full
description of the bifurcation scenario for the two-dimensional parameter space
$d\times1/A$, which is shown in fig.~\ref{fig:d-invA_generic}. More precisely, under
conditions~\conds{} and for any $T>0$, we prove the following.
\begin{itemize}
\item The parameter space $d\times 1/A$, $d\in(0,1)$, contains an infinite
number of disjoint bounded regions (gray regions in
fig.~\ref{fig:d-invA_generic_regions}) for which the system~\SYSTEMWR{}
possesses a unique globally attracting $T$-periodic orbit (fixed point of the
stroboscopic map). These regions are clockwise ordered in such a way that the
$T$-periodic orbit corresponding to the next region exhibits one spike more per
period than the previous one (propositions~\ref{pro:existence_of_fp}
and~\ref{pro:1T_po}).
\item In between the regions corresponding to the existence of fixed points of
the stroboscopic map (grey regions) are the regions defined by the existence of
periodic orbits of arbitrarily large periods (white regions). These periodic
orbits are organized by the period adding structure
(proposition~\ref{pro:adding}).
\item Along any invertible and continuous curve
\begin{equation*}
\lambda\in\RR\longmapsto(d(\lambda),1/A(\lambda))
\end{equation*}
satisfying $d(\lambda)\in(0,1)$, $(1/A(\lambda))'<0$, $1/A(\lambda)\to0$ when
$\lambda\to\infty$ and crossing all the regions mentioned above (as the straight
line labeled in fig.~\ref{fig:d-invA_generic_regions}), the firing rate evolves
following a devil's staircase from $0$ to $\infty$. This is explained in
section~\ref{sec:symbolic}, and is a consequence of proposition~\ref{pro:adding}.
\end{itemize}

The main tool that we will use to prove the previous rsults is  the stroboscopic
map at return time $T$. As it will be shown, this is a piecewise-smooth map
defined on an infinite number of domains containing initial conditions for which
the system~\SYSTEMWR{} exhibits different number of spikes when flowed for a
time $T$. For each domain, there exist parameter values $d$ and $A$ for which
this map has a fixed point located in this domain; these parameter values
correspond to the gray regions in fig.~\ref{fig:d-invA_generic}.

Between two consecutive regions where fixed points exist (white regions in
fig.~\ref{fig:d-invA_generic_regions}), the stroboscopic map is discontinuous at
the boundary between two consecutive domains, and can be written as the normal
form map
\begin{equation}
g(x)=\left\{
\begin{aligned}
&\mu_\LL+g_\LL(x)&&\text{if }x<0\\
&-\mu_\R+g_\R(x)&&\text{if }x>0,
\end{aligned}\right.
\label{eq:normal_form}
\end{equation}
with $g_\LL,g_\R$ smooth functions satisfying $g_\LL(0)=g_\R(0)=0$. As the
stroboscopic map is given by the integration of a flow, its fixed points have
positive associated eigenvalues and hence $g_\LL(x)$ and $g_\R(x)$ are
increasing functions near the origin.  Moreover, due to the assumption $f'<0$,
the flow is everywhere contracting, so that $|g_\LL'|<1$ and $|g_\R'|<1$.

\unitlength=\textwidth
\begin{figure}
\begin{center}
\begin{picture}(1,0.5)
\put(0,0.35){
\subfigure[\label{fig:adding_regions}]{
\includegraphics[angle=-90,width=0.5\textwidth]{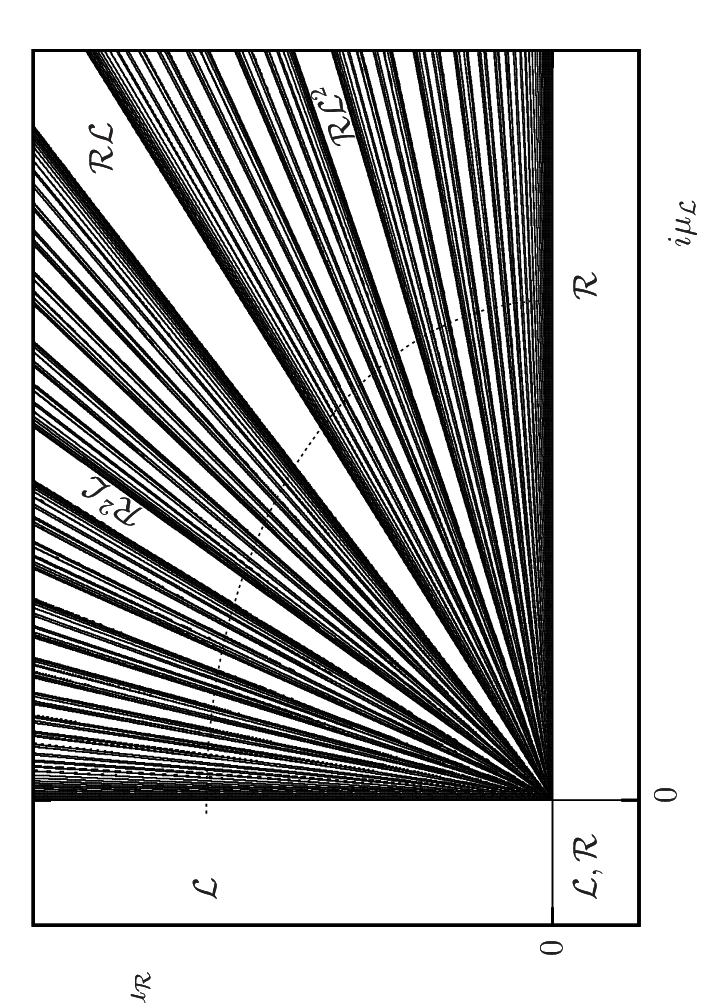}}
}
\put(0.5,0.35){
\subfigure[\label{fig:adding_periods}]{
\includegraphics[angle=-90,width=0.5\textwidth]{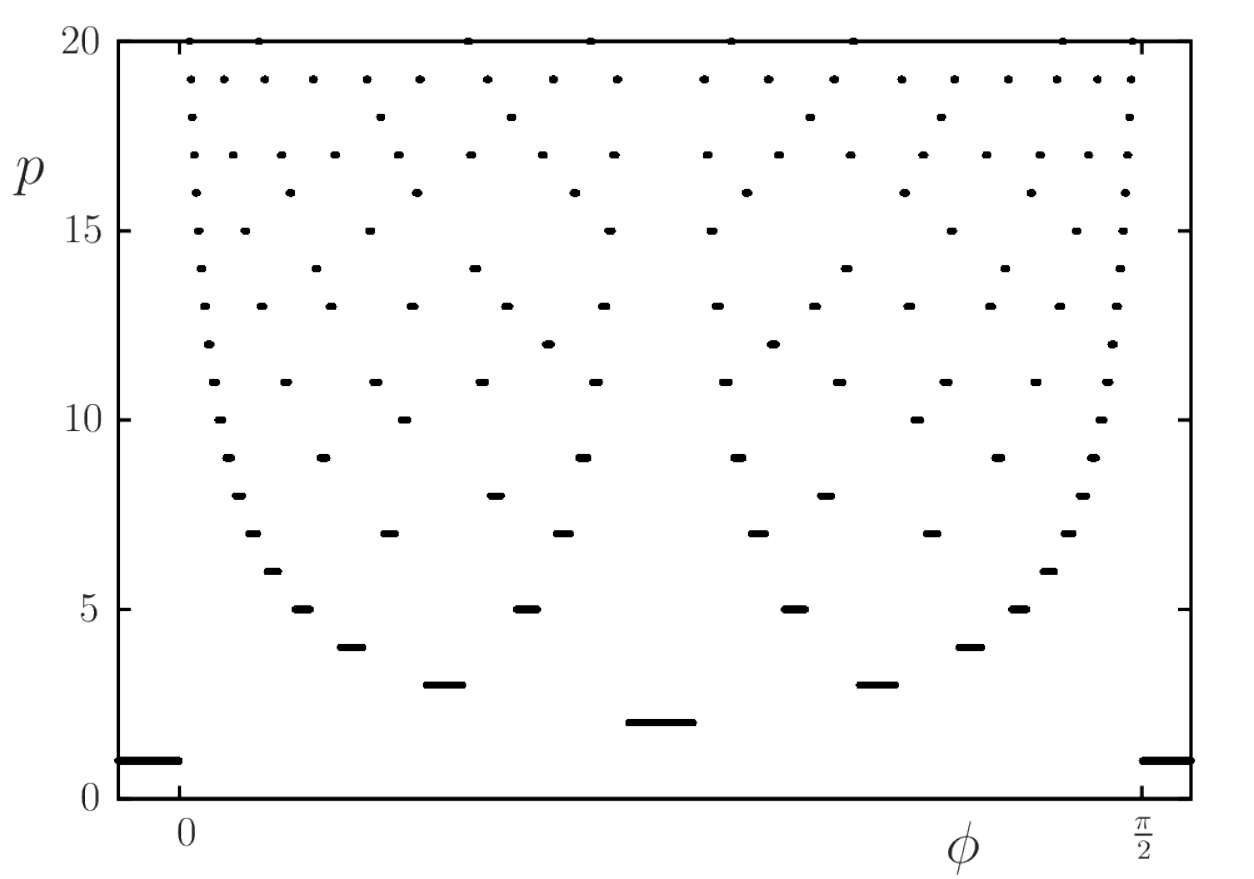}}
}
\end{picture}
\end{center}
\caption{The period adding big bang or gluying bifurcation. (a) $\mu_\LL\times\mu_\R$
parameter space, (b) periods of the periodic orbits along the curve shown in
(a).}
\label{fig:adding_bb}
\end{figure}

We now describe the bifurcation scenario for the map~\eqref{eq:normal_form},
which is shown in fig.~\ref{fig:adding_bb}.\\
Note that, for $\left| \mu_\LL \right|,\left| \mu_\R \right|$ small enough, if
$\mu_\LL<0$ and $\mu_\R<0$ the map~\eqref{eq:normal_form} possesses two fixed
points, each at each side of the boundary $x=0$, which undergo border
collision bifurcations when $\mu_\LL=0$ and $\mu_\R=0$, respectively. Hence, for
$\mu_\LL=\mu_\R=0$ both fixed points simultaneously collide with the boundary,
undergo border collision bifurcations and no longer exist. For
$\mu_\LL,\mu_\R>0$ there exist an infinite number of border collision
bifurcation curves separating regions of existence of periodic orbits with
arbitrarily high periods. These are organized by the so-called \emph{period
adding} phenomenon. To explain this, we first introduce the usual symbolic
dynamics.\\
Let us assume that $(x_1,..,x_n)$ is a $n$-periodic orbit of the
map~\eqref{eq:normal_form}. To this periodic orbit we assign a symbolic encoding
given by
\begin{equation}
\begin{aligned}
&x_i\to \LL&&\text{if }x_i<0\\
&x_i\to\R&&\text{if }x_i>0.
\end{aligned}
\label{eq:encoding}
\end{equation}
For each periodic orbit we obtain a symbolic sequence of $\LL$'s and
$\R$'s depending on whether the orbit steps on the left or on the right of the
boundary. Obviously, the length of the symbolic sequence is the period of the
periodic orbit.\\
Then, the period adding structure is described as follows. Between the regions
of existence of periodic orbits with symbolic sequences $\sigma_1$ and
$\sigma_2$ and periods $n_{\sigma_1}$ and $n_{\sigma_2}$ there exists a region
corresponding to the existence of a periodic orbit with symbolic sequence
$\sigma_1\sigma_2$. Clearly, the period of this periodic orbit is
$n_{\sigma_1}+n_{\sigma_1}$. This is occurs ad infinitum as illustrated in
fig.~\ref{fig:adding_periods}, where one can see the periods of the periodic
orbits obtained when crossing all these regions along a curve as the one shown
in fig.~\ref{fig:adding_regions}.\\
Beyond the evolution of the periods of the periodic orbits, one of the most
relevant quantities associated with the adding scheme is the \emph{rotation
number} corresponding to the periodic orbits obtained when crossing all these
regions in the parameter space. As shown in fig.~\ref{fig:farey_tree}, the
rotation number associated to each periodic orbit is obtained by the dividing
the number of $\R$ symbols contained in its symbolic sequence by the period of
the periodic orbit (see~\cite{GamGleTre84,GamLanTre84}). Hence, the rotation
number follows a \emph{devil's staircase} from $0$ to $1$ when para\-meters are
varied along curves as the one shown in fig.~\ref{fig:adding_regions}. A devil's
staircase is a monotonically increasing continuous function which is constant
almost every\-where, except for a Cantor set of zero measure. In this Cantor set,
which consists of the border collision bifurcation curves shown in
fig.~\ref{fig:adding_regions}, the rotation number becomes irrational and no
periodic orbit exists but a quasi-periodic one.

\begin{figure}
\begin{center}
\begin{picture}(1,0.5)
\put(0,0.015){
\subfigure[\label{fig:farey_tree}]{
\begin{picture}(0.5,0.5)
\put(-0.01,0.06){
\scalebox{0.65}{
\begin{tikzpicture}[->,>=stealth',shorten >=1pt,auto,node distance=1cm]
\node[] (a1){};
\node[] (a2) [right of=a1] {};
\node[] (a3) [right of=a2] {};
\node[] (a4) [right of=a3] {$\LL/0$};
\node[] (a5) [right of=a4] {};
\node[] (a6) [right of=a5] {$\R/1$};
\node[] (a7) [right of=a6] {};
\node[] (a8) [right of=a7] {};
\node[] (a9) [right of=a8] {};
\node[] (b1) [above of=a1] {};
\node[] (b2) [right of=b1] {};
\node[] (b3) [right of=b2] {};
\node[] (b4) [right of=b3] {};
\node[] (b5) [right of=b4] {$\LL\R/\frac{1}{2}$};
\node[] (b6) [right of=b5] {};
\node[] (b7) [right of=b6] {};
\node[] (b8) [right of=b7] {};
\node[] (b9) [right of=b8] {};
\node[] (c1) [above of=b1] {};
\node[] (c2) [right of=c1] {};
\node[] (c3) [right of=c2] {$\LL^2\R/\frac{1}{3}$};
\node[] (c4) [right of=c3] {};
\node[] (c5) [right of=c4] {};
\node[] (c6) [right of=c5] {};
\node[] (c7) [right of=c6] {$\LL\R^2/\frac{2}{3}$};
\node[] (c8) [right of=c7] {};
\node[] (c9) [right of=c8] {};
\node[] (d1) [above of=c1] {};
\node[] (d2) [right of=d1] {$\LL^3\R/\frac{1}{4}$};
\node[] (d3) [right of=d2] {};
\node[] (d4) [right of=d3] {};
\node[] (d5) [right of=d4] {};
\node[] (d6) [right of=d5] {};
\node[] (d7) [right of=d6] {};
\node[] (d8) [right of=d7] {$\LL\R^3/\frac{3}{4}$};
\node[] (d9) [right of=d8] {};
\node[] (e1) [above of=d1] {$\LL^4\R/\frac{1}{5}$};
\node[] (e2) [right of=e1] {};
\node[] (e3) [right of=e2] {};
\node[] (e4) [right of=e3] {$\LL^2\R\LL\R/\frac{2}{5}$};
\node[] (e5) [right of=e4] {};
\node[] (e6) [right of=e5] {$\LL\R\LL\R^2/\frac{3}{5}$};
\node[] (e7) [right of=e6] {};
\node[] (e8) [right of=e7] {};
\node[] (e9) [right of=e8] {$\LL\R^4/\frac{4}{5}$};
\path[]
(a4) edge node [left] {} (b5)
(a6) edge node [left] {} (b5)
(b5) edge node [left] {} (c3)
(a4) edge node [left] {} (c3)
(b5) edge node [left] {} (c7)
(a6) edge node [] {} (c7)
(c7) edge node [] {} (d8)
(a6) edge node [] {} (d8)
(a4) edge node [left] {} (d2)
(c3) edge node [left] {} (d2)
(a4) edge node [left] {} (e1)
(d2) edge node [left] {} (e1)
(a6) edge node [] {} (e9)
(d8) edge node [] {} (e9)
(b5) edge node [] {} (e4)
(c3) edge node [] {} (e4)
(b5) edge node [] {} (e6)
(c7) edge node [] {} (e6)
;
\end{tikzpicture}
}
}
\end{picture}
}
}
\put(0.5,0.35){
\subfigure[\label{fig:devils_staircase}]{
\includegraphics[angle=-90,width=0.5\textwidth]{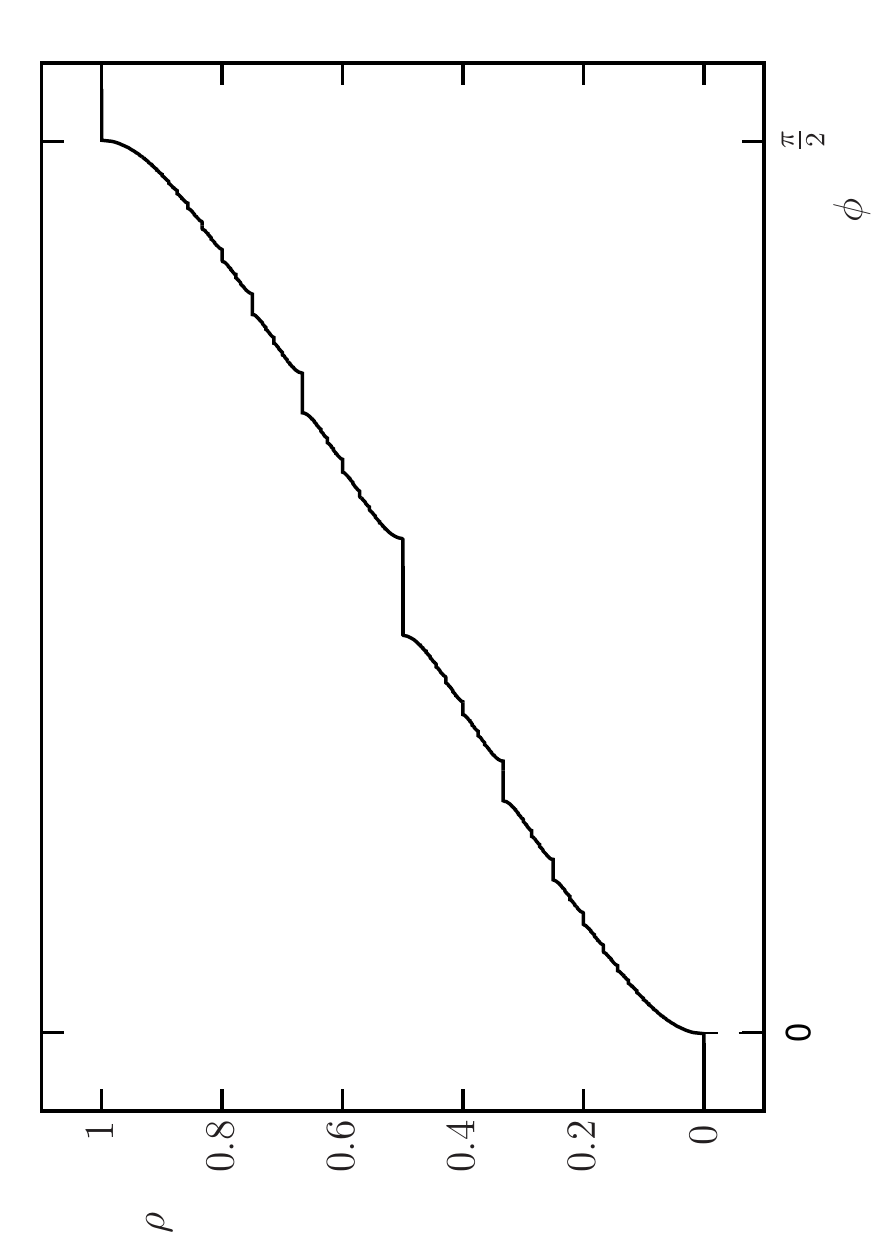}
}
}
\end{picture}
\caption{(a) Symbolic sequences and rotation numbers forming a Farey tree
structure for the period adding. (b) Devil's staircase associated with  the
rotation numbers along the Farey tree, obtained obtained along a curve as the
one labeled in fig.~\ref{fig:adding_regions}.}
\end{center}
\end{figure}
The period adding phenomenon described above was first described\linebreak
in~\cite{GamGleTre84}, and later studied in more detail
in~\cite{GamProThoTre86,LyuPikZak89,TurShi87,ProThoTre87,Gam87,GamTre88}.
Subsequently, this was rediscovered in~\cite{AvrSch06} when numerically
investigating the map~\eqref{eq:normal_form} with $g_\LL$ and $g_\R$ linear
functions. The authors of~\cite{AvrSch06} provided an accurate description of
the bifurcation scenario in terms of non-smooth language (border collision
bifurcations). A special interest was given to the codimension-two bifurcation
at the origin of the $\mu_\LL\times\mu_\R$ parameter space, which was called
\emph{period-adding big bang} bifurcation, to distinguish it from the so-called
\emph{period-incrementing big bang} bifurcation, being both a special type of
the \emph{gluing} bifurcation reported
in~\cite{CouGamTre84,GamGleTre88}. The period incrementing case, which exhibits
a completely different bifurcation scenario, occurs when one of the functions
$g_\LL$ or $g_\R$ is decreasing and the other one increasing near the origin
(being both contracting), as was rigorously proven in~\cite{AvrGraSch11}.
The result in the adding case (relevant to this paper) is stated below. A
complete proof of this result has not been published, but it can be pieced
together from partial results presented in \cite{Gam87,GamTre88,GamGleTre88}.

\begin{theorem}\label{theo:adding}
Let $g$ be a map as in eq.~\eqref{eq:normal_form} such that $g_\LL$ and
$g_\R$ are increasing contracting functions,
\begin{equation}
\begin{aligned}
&0<(g_\LL(x))'<1&& x\in(-\infty,0)\\
&0<(g_\R(x))'<1&&x\in(0,\infty)
\end{aligned}
\label{eq:increasing_contracting_condition}
\end{equation}
Consider a $C^1$ curve in the parameter space $\mu_\LL\times\mu_\R$
\begin{equation*}
\begin{array}{cccc}
\gamma:&[0,1]&\longrightarrow&\RR^2\\
&\lambda&\longmapsto &(\mu_\LL(\lambda),\mu_\R(\lambda))
\end{array}
\end{equation*}
satisfying
\begin{enumerate}[C.1]
\item $\mu_\LL(\lambda)> 0$ and $\mu_\R(\lambda)>0$ for $\lambda\in(0,1)$
\item $\left( \mu_\LL(\lambda) \right)'>0$ and $\left( \mu_\R(\lambda) \right)'<0$
for $\lambda\in[0,1]$
\item $\mu_\LL(0)=0$, $\mu_\R(1)=0$
\end{enumerate}
Then,
%
the bifurcation diagram exhibited by the map $g_\lambda$ obtained from
eq.~\eqref{eq:normal_form} after performing the reparametrization given by
$\gamma$, follows a period adding strucutre for $\lambda\in[0,1]$. Moreover, the
rotation number $\rho(g_\lambda)$ of the map $g_\lambda$ is a devil's staircase
as a function of $\lambda$ such that $\rho(g_0)=0$ and $\rho(g_1)=1$.
\end{theorem}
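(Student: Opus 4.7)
The plan is to view the map $g$ (for $\mu_\LL,\mu_\R>0$ small) as a degree-one monotone circle map with a ``gap'' at the discontinuity, and then apply the classical combination of rotation-number theory with the contraction hypothesis~\eqref{eq:increasing_contracting_condition} to obtain both the mode-locking structure and the devil's staircase.

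First I would establish an attracting invariant interval. Since $g_\LL$ and $g_\R$ are contracting with $g_\LL(0)=g_\R(0)=0$, for $\mu_\LL,\mu_\R>0$ small enough there is a compact interval $I\supset[-\mu_\R,\mu_\LL]$ with $g(I\setminus\{0\})\subset I$, and no fixed point of $g$ exists in $I$. Identifying the two endpoints of $I$ via the jump of size $\mu_\LL+\mu_\R$ at $x=0$, I would define a lift $\tilde g\colon\RR\to\RR$ that is continuous, strictly increasing, and satisfies $\tilde g(x+|I|)=\tilde g(x)+|I|$. This ``gap map'' construction is standard for injective discontinuous maps whose one-sided limits at the discontinuity have the right order, and it is exactly what is ensured by $\mu_\LL,\mu_\R>0$ together with $|g_\LL'|,|g_\R'|<1$.

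Next I would define $\rho(\lambda)=\lim_{n\to\infty}\tilde g_\lambda^n(x)/n$, use subadditivity to show the limit exists and is independent of $x$, and show by a pointwise monotonicity argument that $\rho$ depends continuously and monotonically on $\lambda$ along $\gamma$: by C.2, increasing $\lambda$ raises the left branch and lowers the discontinuity on the right, so $\tilde g_\lambda$ is a pointwise increasing family of lifts. The endpoint values $\rho(g_0)=0$ and $\rho(g_1)=1$ come from the border collisions imposed by C.3: at $\lambda=0$ the left branch has a fixed point at $0^-$, pinning $\rho=0$, and symmetrically at $\lambda=1$.

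The heart of the proof is to show that for every rational $p/q\in[0,1]$ the preimage $\{\lambda\in[0,1]:\rho(g_\lambda)=p/q\}$ is a nondegenerate closed interval on which $g_\lambda$ has a unique attracting periodic orbit of period $q$, and that the symbolic sequence~\eqref{eq:encoding} of this orbit is the unique Sturmian word of slope $p/q$. Existence of the periodic orbit on some nondegenerate interval follows from the contraction assumption: along any periodic orbit the stability multiplier is a product of factors in $(0,1)$, so the orbit is exponentially attracting and persists under small parameter changes; combined with the continuity of $\rho$, this yields a full mode-locking ``tongue''. Uniqueness of the Sturmian sequence, and with it the Farey-mediant identity $\sigma_{(p_1+p_2)/(q_1+q_2)}=\sigma_{p_1/q_1}\sigma_{p_2/q_2}$ for Farey neighbors, is the main obstacle: it must be established by combinatorial analysis of the order of iterates relative to the discontinuity, exploiting that $g_\LL$ and $g_\R$ are both increasing so that the itinerary map preserves the circular order. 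This yields exactly the period adding pattern displayed in fig.~\ref{fig:farey_tree}.

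Finally the devil's staircase claim is a direct consequence. The mode-locking intervals, one for each rational in $(0,1)$, form a dense open subset of $[0,1]$ on which $\rho$ is locally constant; by monotonicity and continuity $\rho$ is therefore constant almost everywhere, takes every rational value on an interval and every irrational value on a single parameter point, and climbs continuously from $\rho(g_0)=0$ to $\rho(g_1)=1$. The complementary Cantor set of zero Lebesgue measure supports quasi-periodic dynamics by Denjoy-type arguments applied to the circle lift $\tilde g_\lambda$.
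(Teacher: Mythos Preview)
The paper does not prove Theorem~\ref{theo:adding}. Immediately before the statement it explicitly says that ``a complete proof of this result has not been published, but it can be pieced together from partial results presented in~\cite{Gam87,GamTre88,GamGleTre88}.'' The theorem is invoked as a known black-box result and only \emph{applied} later (in the proof of Proposition~\ref{pro:adding}), so there is no in-paper argument to compare your proposal against.

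That said, your sketch is precisely the standard route taken in the cited literature: the gap-map/circle-lift construction, existence and parameter monotonicity of the rotation number, contraction-induced persistence of periodic orbits giving nondegenerate mode-locking intervals, and the Sturmian/Farey combinatorics identifying the symbolic sequences. A couple of minor remarks. First, your ``for $\mu_\LL,\mu_\R>0$ small enough'' is not needed here: condition~\eqref{eq:increasing_contracting_condition} is assumed globally, so the absorbing interval $[-\mu_\R,\mu_\LL]$ works for all $\mu_\LL,\mu_\R>0$ (the smallness restriction is only relevant under the local relaxation of Remark~\ref{rem:local_contractiveness}). Second, your monotonicity claim is correct but the phrasing ``lowers the discontinuity on the right'' is imprecise; what C.2 gives is that \emph{both} branches move up pointwise as $\lambda$ increases (since $\mu_\LL$ increases and $-\mu_\R$ also increases), which is exactly what makes the lift a pointwise increasing family. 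Finally, you correctly flag the Sturmian uniqueness/Farey concatenation step as the genuine technical core; this is indeed the part that the cited references (notably~\cite{GamTre88,GamGleTre88}) supply and that requires real work beyond the rotation-number formalism.
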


\begin{remark}\label{rem:local_contractiveness}
The increasing and contractive
condition~\eqref{eq:increasing_contracting_condition} can be relaxed to be
satisfied locally near $x=0$ if $\mu_\LL(\lambda)$ and $\mu_\R(\lambda)$ are
small enough for $\lambda\in(0,1)$. This is because the periodic orbits are
located in the aboserving interval $[-\mu_\R(\lambda),\mu_\LL(\lambda)]$.
\end{remark}

By means of theorem~\ref{theo:adding} we will show that the stroboscopic map
follows period adding bifurcation structures when parameters are varied along
curves as the straight line shown in fig.~\ref{fig:d-invA_generic_regions}: the
white regions in fig.~\ref{fig:d-invA_generic} surrounded by gray ones are
filled by the bifurcation structres shown in fig.~\ref{fig:adding_bb}.
We will use the properties of the devil's staircase of the rotation number to
deduce properties of the firing rate defined in
eq.~\eqref{eq:defirate}.

\section{Dynamics of the stroboscopic map}\label{sec:proof_of_results}

\subsection{Notation, properties and subthreshold dynamics}\label{sec:system_description}
Let
\begin{equation}
\varphi(t;x_0;A)
\label{eq:flow_A}
\end{equation}
be the flow associated with system
\begin{equation}
\dot{x}=f(x)+A
\label{eq:autonomous_system_d1}
\end{equation}
such that $\varphi(0;x_0;A)=x_0$. As usual in piecewise-defined systems, by
properly concatenating the flows $\varphi(t;x_0;A)$, $\varphi(t;x_0;0)$ and the
reset condition~\eqref{eq:reset_condition} one obtains the flow
\begin{equation}
\phi(t;x_0)
\label{eq:flow_non-auto}
\end{equation}
associated with system~\SYSTEMWR{} such that $\phi(0;x_0)=x_0$. Due to the reset
conditions this flow is discontinuous at the times for which spikes occur. In
addition, due to the discontinuities of the forcing $I(t)$ defined
in~\eqref{eq:forcing}, the flow $\phi$ is non differentiable at
$t=dT\mod{T}$.\\
Note that for the definition of the flow $\phi$ associated with the
non-autonomous system~\eqref{eq:general_system} we consider that the initial
condition $x_0$ occurs at $t_0=0$. This will be assumed in the rest of this work
and, as we are interested on the existence of periodic orbits, does not present
any loss generality.\\

Under assumptions~\conds{} we first study the subthreshold dynamics; that is,
invariant objects of system~\SYSTEMWR{} that do not interact with the threshold
(do not exhibit spikes).\\

On one hand, if $d=0$, then $I(t)=0$ and the system~\SYSTEMWR{} has
the same dynamics as the autonomous system~(\ref{eq:autonomous_system}). That
is, it has an attracting equilibrium point at $x=\bx$.

On the other hand, if $d=1$, then $I(t)=A$. Hence system~\SYSTEMWR{} becomes the
autonomous system~\eqref{eq:autonomous_system_d1} subject to the reset
condition~\eqref{eq:reset_condition} and, as for $d=0$, the $T$-periodic forcing
does not play any role. By the implicit function theorem, if $A>0$ is small
enough the system possesses an attracting equilibrium point at
\begin{equation*}
x^*=\bx-\frac{A}{f'(\bx)}+O(A^2)<\theta.
\end{equation*}
Note that, as $f(x)$ is a monotonically decreasing function (condition H.2,) and
hence this equilibrium point increases with $A$ towards the boundary $x=\theta$.
Thus, for $d=1$, when $A$ is large enough this equilibrium point collides with
the boundary $x=\theta$, undergoes a border collision bifurcation. After that,
the system has a periodic orbit that spikes once per period. The period of this
periodic orbit, $\delta>0$, becomes the time needed by system $\dot{x}=f(x)+A$
with initial condition $x_0=0$ to reach the threshold $x=\theta$. This is the
smallest $\delta>0$ such that
\begin{equation}
\varphi(\delta;0;A)=\theta,
\label{eq:delta}
\end{equation}
if it exists ($A$ is large enough).\\
Let us now study the subthreshold dynamics of the system when $0<d<1$. We have
the next
%
\begin{prop}\label{prop:periodic_orbit}
Let
\begin{equation*}
Q:=\frac{1}{T}\int_0^T I(t)dt=Ad
\end{equation*}
be small enough. Then, if $T>0$ is small enough,
system~(\ref{eq:general_system})-(\ref{eq:reset_condition}) has a $T$-periodic
orbit that does not hit the boundary $x=\theta$.
\end{prop}
\begin{proof}
By averaging the system. After the time rescale $\tau=\frac{t}{T}$
system~(\ref{eq:general_system}) becomes
\begin{equation}
\dot{x}=T\left(f(x)+I(\tau T)\right),
\label{eq:general_system_rescaled}
\end{equation}
where $\dot{}$ means now derivative with respect to $\tau$ and $I(\tau T)$ is
$1$-periodic. We now consider the averaged version of
system~(\ref{eq:general_system_rescaled}),
\begin{equation}
\dot{y}=T\left( f(x)+Q \right).
\label{eq:averaged_system}
\end{equation}
By the averaging theorem, as system~\eqref{eq:general_system_rescaled} is
Lipschitz in $x$, if system~(\ref{eq:averaged_system}) possesses a hyperbolic
equilibrium point then, for $T>0$ small enough,
system~(\ref{eq:general_system_rescaled}) possesses a  hyperbolic periodic orbit
(see~\cite{BogMit61,GucHol83}). This will occur if $Q>0$ is small enough, and
the equilibrium point of~(\ref{eq:averaged_system}) will be of the form
\begin{equation*}
\bar{y}=\bx-\frac{Q}{f'(\bx)}+O\left(  Q^2 \right)<\theta,
\end{equation*}
where $\bx$ is the hyperbolic equilibrium point of
system~(\ref{eq:autonomous_system}).\\
Then, the periodic orbit of system~(\ref{eq:general_system_rescaled}) will be
$T$-close to $\bar{y}$.
\end{proof}

When $A$ is large, the periodic orbit given by the previous lemma may undergo a
border collision bifurcation and lead to spiking dynamics. This new dynamics
will be studied in sections~\ref{sec:bif_struct} and~\ref{sec:period_adding} by
means of the stroboscopic map, i.e. the time-$T$ return map
\begin{equation}
\begin{array}{cccc}
\s:&[0,\theta)&\longrightarrow &[0,\theta)\\
&x_0&\longmapsto&\phi(T;x_0).
\end{array}
\label{eq:strobo_map}
\end{equation}
The main results in this work rely on the fact that $\s$ is a piece-smooth map.
To see this, we define the sets
\begin{equation}
\begin{aligned}
S_n=\Big\{ x_0\in [0,\theta)\;\text{s.t. }&\phi(t;x_0)\text{ reaches
 the threshold }\left\{ x=\theta\right\}\\
&n\text{ times for }0\le t\le T \Big\},\, n\ge0.
\end{aligned}
\label{eq:sets}
\end{equation}
When restricted to some set $S_n$, the flow $\phi(t;x_0)$ becomes a certain
combination of the smooth flows $\varphi(t;x_0;A)$, $\varphi(t;x_0;0)$ and the
smooth mapping $\theta\mapsto0$ (given by the reset condition) which does not
depend on the initial condition $x_0\in S_n$. Hence, the stroboscopic map $\s$
becomes a smooth map in each of the sets $S_n$ and is as regular as the flow
$\varphi(t;A)$.  Given $m\neq n$, the flow, when restricted to $S_n$ and $S_m$,
performs a different number of spikes in the time window $[0, T]$. Hence $\s$ is
given by a different combination of the mentioned maps.  Thus, $\s$ is smooth on
the interiors of $S_n$ and $S_m$ but will typically be discontinuous at its boundaries.

Note that some of the sets $S_n$ may be empty. The next lemma tells us that at
most two of them are non empty and they must be consecutive (see
fig.~\ref{fig:boundary} for $S_2$ and $S_3$). Moreover, let $\Sigma_n\in S_n$ be
defined by the requirement
\begin{equation}
\phi(dT;\Sigma_n)=\theta.
\label{eq:boundaries}
\end{equation}
That is, the (unique) initial condition whose trajectory spikes $n$ times and
performs its last spike precisely at $t=dT$. The next lemma tells us also that
$\Sigma_n$ is the value at which the map $\s$ exhibits a discontinuity and
separates the sets $S_{n-1}$ and $S_n$.

\begin{lem}\label{lem:unique_Sigma_n}
Assume that there exists $\Sigma_n\in S_n$ as given in
eq.~\eqref{eq:boundaries} for some $n\ge1$.
Then the following statements hold:
\begin{enumerate}[i)]
\item If $\Sigma_i,\Sigma_j\in[0,\theta)$, then $i=j$.
\item If $\Sigma_n\in(0,\theta)$, then $[0,\Sigma_n)=S_{n-1}$ and
$[\Sigma_n,\theta)=S_n$.
\item If $\Sigma_n=0$, then $[0,\theta)=S_n$.
\end{enumerate}
\end{lem}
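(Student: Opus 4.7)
The plan is to reduce the combinatorial statement about the partition $\{S_n\}$ to the monotonicity of a single scalar quantity, the first-spike time
\[
\tau_1(x_0) := \inf\{t>0 \colon \varphi(t;x_0;A) = \theta\}.
\]
Because $\dot x = f(x)+A$ is a one-dimensional smooth ODE, uniqueness of solutions prevents trajectories from crossing, so $x_0\mapsto \tau_1(x_0)$ is continuous and strictly decreasing on its effective domain, with $\tau_1(\theta^-)=0$. Under the hypothesis of the lemma spikes do occur, so $\delta$ from~\eqref{eq:delta} exists and $\tau_1(0)=\delta$.

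Next I would show that all spikes of $\phi(\cdot;x_0)$ in $[0,T]$ lie in $[0,dT]$ and are equispaced by $\delta$. The first claim is immediate because on $(dT,T]$ the forcing vanishes and, by H.1, $\bx<\theta$ attracts all of $[0,\theta]$ under $\dot x = f(x)$, so the trajectory cannot reach the threshold. The second claim is just that every reset returns the state to $0$ while $I\equiv A$ is still active, so consecutive spikes are separated by exactly $\delta$. Consequently the spike count admits the explicit formula
\[
n(x_0) := \#\{\text{spikes of } \phi(\cdot;x_0)\text{ in }[0,T]\} = \max\!\left(0,\ \left\lfloor\frac{dT-\tau_1(x_0)}{\delta}\right\rfloor+1\right).
\]
As $x_0$ runs over $[0,\theta)$, $\tau_1$ sweeps $(0,\delta]$ monotonically, so $(dT-\tau_1(x_0))/\delta$ traverses a half-open interval of length exactly one and its floor takes at most two consecutive integer values. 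Therefore $n(x_0)$ takes at most two consecutive values, and the jump, if any, occurs at the unique $x_0$ satisfying $\tau_1(x_0)=dT-(n-1)\delta$, which is equivalent to $\phi(dT;x_0)=\theta$ with the $n$-th spike at $t=dT$. This point is exactly $\Sigma_n$, which proves part~(i).

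Parts~(ii) and~(iii) then follow by localizing around $\Sigma_n$. If $\Sigma_n\in(0,\theta)$, strict monotonicity of $\tau_1$ gives $\tau_1(x_0)<dT-(n-1)\delta$ for $x_0>\Sigma_n$ and $\tau_1(x_0)>dT-(n-1)\delta$ for $x_0<\Sigma_n$, whence $[\Sigma_n,\theta)=S_n$ and $[0,\Sigma_n)=S_{n-1}$. If $\Sigma_n=0$, then $dT=n\delta$ and every $x_0\in(0,\theta)$ has $\tau_1(x_0)<\delta=dT-(n-1)\delta$, so $n(x_0)=n$ throughout, while at the boundary value $x_0=0$ one still spikes exactly $n$ times. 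The main delicate point is the endpoint convention at the switching time $t=dT$: one must verify that a spike realized precisely at the closure of the forcing interval is counted (so that $\Sigma_n\in S_n$ rather than $S_{n-1}$) and that after the immediate reset the subthreshold flow $\dot x = f(x)$ cannot produce any additional spike before $T$; both issues are settled by the two preliminary observations above.
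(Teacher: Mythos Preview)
Your argument is correct and rests on the same three ingredients as the paper's proof: strict monotonicity of the first-spike time in $x_0$, the $\delta$-equispacing of subsequent spikes while $I\equiv A$, and the impossibility of spikes on $(dT,T]$ because $\bar x<\theta$ attracts under $\dot x=f(x)$. The only difference is one of packaging: you condense these observations into the closed formula $n(x_0)=\max\bigl(0,\lfloor (dT-\tau_1(x_0))/\delta\rfloor+1\bigr)$ and read off (i)--(iii) from the fact that $\tau_1$ bijects $[0,\theta)$ onto $(0,\delta]$, whereas the paper argues the same comparison more informally by tracking the spike-time sequence of $\Sigma_n$ directly.
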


\begin{figure}
\begin{center}
\begin{picture}(1,0.5)
\put(0,0.35){
\subfigure[]
{\includegraphics[angle=-90,width=0.5\textwidth]{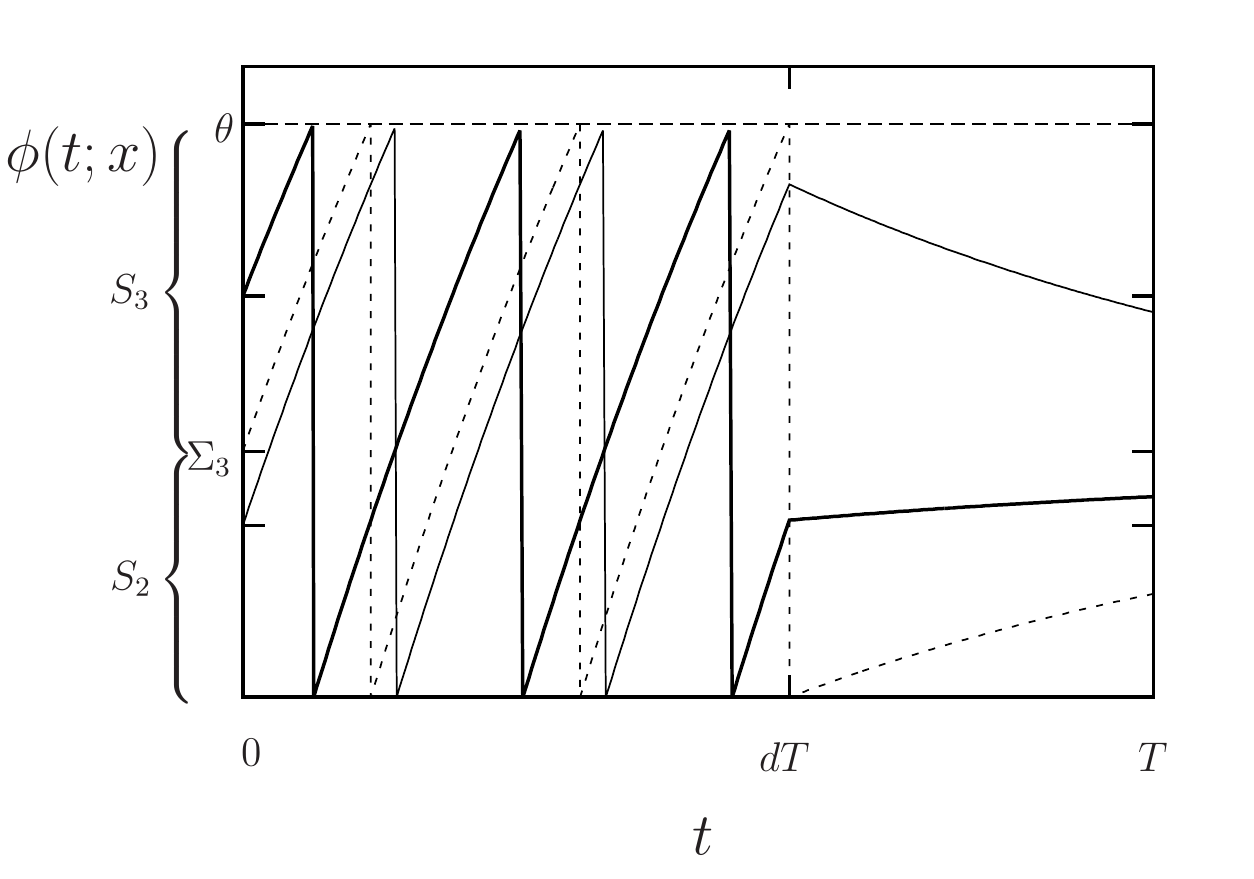}}
}
\put(0.5,0.35){
\subfigure[\label{fig:boundary_map}]
{\includegraphics[angle=-90,width=0.5\textwidth]{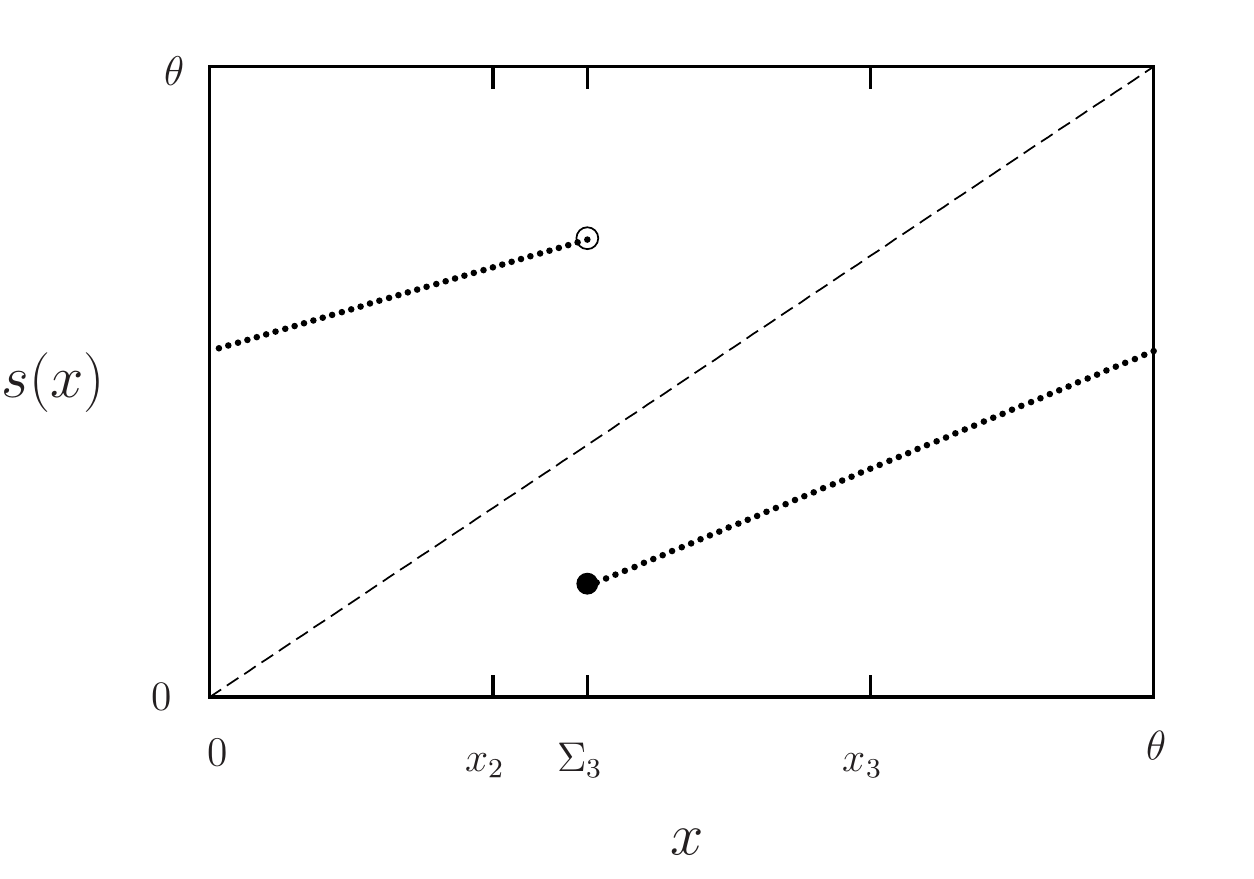}}
}
\end{picture}
\end{center}
\caption{In (a) the trajectories of systems~\SYSTEMWR{}. Dashed line: trajectory
with $\Sigma_3$ as initial condition. Thick line: trajectory with $x_3>\Sigma_3$
as initial condition, which spikes $3$ times. Normal line: trajectory with
$x_2<\Sigma_3$ as initial condition, which spikes $2$ times. In (b) the
stroboscopic map, with a discontinuity at $x=\Sigma_3$.}
\label{fig:boundary}
\end{figure}

\begin{proof}
The first part \emph{i)} comes from the fact that the flow $\phi$ is invertible.
Hence, if it exists, the initial condition that makes $\phi$ reach $\theta$ at
$t=dT$ is unique.\\
We now see \emph{ii)}. Let
\begin{equation}
t_n<t_n+\delta<\dots<t_n+(n-2)\delta<t_n+(n-1)\delta=dT
\label{eq:spiking_times}
\end{equation}
be the sequence of times for which the trajectory $\phi(t;\Sigma_n)$ exhibits
spikes.  Let us now take an initial condition $x_0<\Sigma_n$.  For $t<dT$ the
flow $\phi(t;x_0)$ becomes the flow of the autonomous system $\dot{x}=f(x)+A$
plus the reset condition~\eqref{eq:reset_condition}. Hence, the first spike of
the trajectory $\phi(t;x_0)$ occurs for $t>t_n$.  By induction, the $n$th spike
for such a trajectory occurs for $t\in(dT-\delta,dT)$, with $\delta$ as in
eq.~\eqref{eq:delta}. For $t>dT$ the trajectory $\phi(t;x_0)$ becomes the flow
of system $\dot{x}=f(x)$, which has an attractor $\bx\in(0,\theta)$; hence, no
other spikes can occur for $t>dT$ and the trajectory $\phi(t;x_0)$ exhibits
exactly $n-1$ spikes for $0\le t\le T$.  Using \emph{i)} we get that,
$[0,\Sigma_n)= S_{n-1}$. Arguing similarly, any trajectory $\phi(t;x_0)$ with
$x_0\in[\Sigma_n,\theta)$ exhibits exactly $n$ spikes for $0\le t\le T$ and thus
$[\Sigma_n,\theta)=S_n$.\\
Proceeding similarly, if $\Sigma_n=0$, then all initial conditions in
$[0,\theta)$ lead to trajectories exhibiting a $n$th spike for
$t\in(dT-\delta,dT)$.  Hence, $S_n=[0,\theta)$.
\end{proof}

\begin{remark}
The values $\Sigma_n$ become upper and lower boundaries of the sets
$S_{n-1}$ and $S_n$, respectively.
\end{remark}
The following results give us properties of the boundaries $\Sigma_n$ which we
will use in section~\ref{sec:bif_struct}.
\begin{lem}\label{lem:monotonicity}
Let $d\in(0,1)$, $A>0$ and assume $\Sigma_n\in(0,\theta)$. Then $\Sigma_n$ is a
monotonically decreasing function of $A$.
\end{lem}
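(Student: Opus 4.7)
The plan is to write $\Sigma_n$ as the implicit solution of a single equation involving the autonomous flow $\varphi(t;\,\cdot\,;A)$ of $\dot x=f(x)+A$, and then apply the implicit function theorem. Concretely, a trajectory starting at $\Sigma_n$ under the pulse follows $\dot x=f(x)+A$, first reaches $\theta$ at some time $\tau_0>0$, and after each reset the time between consecutive spikes is exactly $\delta(A)$ (defined by $\varphi(\delta(A);0;A)=\theta$). Requiring the $n$-th spike to occur precisely at $t=dT$ gives $\tau_0=dT-(n-1)\delta(A)$, so $\Sigma_n$ is determined by
\begin{equation*}
G(\Sigma_n,A):=\varphi\!\left(dT-(n-1)\delta(A);\,\Sigma_n;\,A\right)-\theta=0.
\end{equation*}

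Next I would compute the signs of $\partial_A G$ and $\partial_{x_0} G$ via the variational equations for $\varphi$. Writing $\xi(t)=\partial\varphi/\partial x_0$ and $\eta(t)=\partial\varphi/\partial A$, one has $\dot\xi=f'(\varphi)\xi$ with $\xi(0)=1$, giving $\xi(t)=\exp\!\int_0^t f'(\varphi(s))\,ds>0$, and $\dot\eta=f'(\varphi)\eta+1$ with $\eta(0)=0$, giving $\eta(t)=\int_0^t\exp\!\int_s^t f'(\varphi(u))\,du\,ds>0$ for $t>0$. Applied at $t=\delta(A)$ and combined with $\partial_t\varphi(\delta;0;A)=f(\theta)+A>0$ (positive because the trajectory crosses $\theta$ transversally), implicit differentiation of $\varphi(\delta(A);0;A)=\theta$ yields
\begin{equation*}
\delta'(A)=-\frac{\eta(\delta(A))}{f(\theta)+A}<0,
\end{equation*}
so $\delta$ decreases in $A$, and consequently $\tau_0'(A)=-(n-1)\delta'(A)\ge 0$.

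Putting the pieces together,
\begin{equation*}
\frac{\partial G}{\partial A}=(f(\theta)+A)\,\tau_0'(A)+\eta(\tau_0;\Sigma_n;A)>0,\qquad
\frac{\partial G}{\partial x_0}=\xi(\tau_0;\Sigma_n;A)>0,
\end{equation*}
both being strictly positive (the first because each summand is nonnegative and $\eta(\tau_0;\Sigma_n;A)>0$ since $\tau_0>0$). The implicit function theorem then gives
\begin{equation*}
\frac{d\Sigma_n}{dA}=-\frac{\partial G/\partial A}{\partial G/\partial x_0}<0,
\end{equation*}
which is the desired monotonicity.

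The step I expect to require the most care is justifying that the above implicit description is actually valid throughout the range of $A$ under consideration, i.e.\ that as $A$ varies the same combinatorial scheme of one pre-reset arc followed by $n-1$ full inter-spike arcs continues to realize $\Sigma_n$, that $\tau_0>0$ persists (equivalently $(n-1)\delta(A)<dT$ in the relevant range), and that $f(\theta)+A>0$ so the flow crosses $\theta$ transversally; assumption H.2 together with the hypothesis $\Sigma_n\in(0,\theta)$ ensures these, but they should be pointed out explicitly before invoking the implicit function theorem. The variational computations themselves are routine.
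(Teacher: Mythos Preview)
Your argument is correct and follows essentially the same route as the paper: both set up the implicit equation $\varphi(dT-(n-1)\delta(A);\Sigma_n;A)=\theta$ and apply the implicit function theorem, first establishing $\delta'(A)<0$ and then concluding $d\Sigma_n/dA<0$. The only difference is computational: the paper exploits the one-dimensional separation-of-variables formula $\int_{\Sigma_n}^\theta\frac{dx}{f(x)+A}=dT-(n-1)\delta$ and differentiates the integral directly, whereas you compute the same partial derivatives via the variational equations for $\xi=\partial_{x_0}\varphi$ and $\eta=\partial_A\varphi$; your version is slightly heavier but has the advantage of not being tied to dimension one.
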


\begin{proof}
To avoid confusions with the differential and derivative, in this proof we
rename the duty cycle $d$ by $a$.\\
The boundary $\Sigma_n$ is determined by the initial condition such that the
flow $\phi(t;x_0)$ collides with the boundary for the $n$th time at $t=aT$. Let
$\varphi(t;x_0;A)$ be the flow defined in eq.~\eqref{eq:flow_A}, and let
$\delta>0$ be the time needed by with initial condition $x_0=0$ to reach the
threshold $\theta$, as defined in eq.~\eqref{eq:delta}. Note that $\delta$ is a
decreasing function of $A$. In fact, rewriting eq.~\eqref{eq:delta} as
\begin{equation*}
\int_0^\theta\frac{dx}{f(x)+A}=\delta,
\end{equation*}
we get that
\begin{equation*}
\frac{d\delta}{dA}=-\int_0^\theta\frac{dx}{\left( f(x)+A \right)^2}<0.
\end{equation*}
The boundary $\Sigma_n$ is thus determined by the equation
\begin{equation*}
\varphi(aT-(n-1)\delta;\Sigma_n,A)=\theta,
\end{equation*}
or, equivalently, by the equation
\begin{equation*}
\int_{\Sigma_n}^\theta\frac{dx}{f(x)+A}=aT-(n-1)\delta.
\end{equation*}
Differentiating we get that
\begin{equation*}
\frac{d\Sigma_n}{dA}=-\left( f(\Sigma_n)+A \right)\left( \int_{\Sigma_n}^\theta
\frac{dx}{\left( f(x)+A \right)^2}-(n-1)\frac{d\delta}{dA} \right).
\end{equation*}
Noting that $f(\Sigma_n)+A>0$ because the system $\dot{x}=f(x)+A$ possesses an
attracting equilibrium point for $x>\theta$ (which permits to spike), we get
that
\begin{equation*}
\frac{d\Sigma_n}{dA}<0.
\end{equation*}
\end{proof}

The next lemma provides the lateral values of the stroboscopic map $\s$ at the
discontinuities.
\begin{lem}\label{lem:lateral_values}
Let
\begin{align}
\s_-&=\varphi(T(1-d);\theta;0)\label{eq-defsmin}\\
\s_+&=\varphi(T(1-d);0;0)\label{eq-defsplus},
\end{align}
where $\varphi(t;x_0;A)$ is the flow defined in~\eqref{eq:flow_A}.\\
If $\Sigma_n\in[0,\theta)$, then the lateral images of $\Sigma_n$ by $\s$ do not
depend on $A$ or $n$ and become
\begin{align}
\s(\Sigma_n^-)=\liminf_{x\to\Sigma_n}\s(x)&=\s_-\label{eq:leftimage}\\
\s(\Sigma_n^+)=\limsup_{x\to\Sigma_n}\s(x)&=\s_+\label{eq:rightimage},
\end{align}
\end{lem}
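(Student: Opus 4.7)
The plan is to use the spike pattern of the trajectory through $\Sigma_n$ given in eq.~\eqref{eq:spiking_times}, together with continuous dependence of $\varphi$ on its initial condition. By Lemma~\ref{lem:unique_Sigma_n}, the discontinuity of $\s$ at $\Sigma_n$ is caused by a jump in the number of spikes in $[0,T]$, so the two one-sided limits have to be treated separately. In each case the key quantity is $\phi(dT;x_0)$, since on $(dT,T]$ the input is off and the trajectory simply follows $\dot x=f(x)$ for the fixed time $T(1-d)$; the announced independence of $\s_-$ and $\s_+$ from $A$ and $n$ will then drop out automatically, as only $T(1-d)$, $\theta$ and $f$ appear in the resulting expressions.

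For the left limit, I would take $x_0\in[0,\Sigma_n)=S_{n-1}$ and denote by $t^\ast(x_0)$ the time of its $(n-1)$-th spike. Within the open set $S_{n-1}$ this time is smooth in $x_0$ by the implicit function theorem applied to $\varphi(t;0;A)=\theta$, and letting $x_0\to\Sigma_n^-$ one reads $t^\ast(x_0)\to dT-\delta$ off the penultimate entry of eq.~\eqref{eq:spiking_times}. Between $t^\ast(x_0)$ and $dT$ the trajectory obeys $\dot x=f(x)+A$ started at $0$, so $\phi(dT;x_0)=\varphi(dT-t^\ast(x_0);0;A)\to\varphi(\delta;0;A)=\theta$ by eq.~\eqref{eq:delta}; composing with the autonomous flow on $(dT,T]$ yields $\s(x_0)\to\varphi(T(1-d);\theta;0)=\s_-$, which is eq.~\eqref{eq:leftimage}. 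For the right limit, I would take $x_0\in(\Sigma_n,\theta)=S_n$ and let $t^{\ast\ast}(x_0)$ be the time of its $n$-th spike. Since $\varphi(\,\cdot\,;\,\cdot\,;A)$ is monotone increasing in the initial condition, every spike occurs earlier than at $\Sigma_n$, so $t^{\ast\ast}(x_0)<dT$ and $t^{\ast\ast}(x_0)\to dT^-$ as $x_0\to\Sigma_n^+$. Then $\phi(dT;x_0)=\varphi(dT-t^{\ast\ast}(x_0);0;A)\to\varphi(0;0;A)=0$, and the autonomous flow on $(dT,T]$ gives $\s(x_0)\to\varphi(T(1-d);0;0)=\s_+$, which is eq.~\eqref{eq:rightimage}.

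The main technical point, and really the only obstacle, is justifying that $t^\ast(x_0)$ and $t^{\ast\ast}(x_0)$ converge to their claimed limits as $x_0$ approaches $\Sigma_n$. Inside $S_{n-1}$ and $S_n$ this is standard because $\s$ is a composition of smooth maps (the flow $\varphi$ before each spike and the reset). The one-sided continuity up to the boundary $\Sigma_n$ rests on transversal crossing of the threshold, guaranteed by $f(\theta)+A>0$ (a necessary condition for a spike to occur at all, as already used in Lemma~\ref{lem:monotonicity}), together with uniform continuity of $\varphi$ on the compact interval $[0,dT]$. Writing this transversality argument carefully is the step I would devote most attention to.
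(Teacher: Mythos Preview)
Your proposal is correct and follows essentially the same approach as the paper: both arguments reduce the computation of the lateral limits to identifying $\phi(dT;x_0)$ as $x_0\to\Sigma_n^\pm$ (namely $\theta$ from the left, $0$ from the right) and then flowing forward under $\dot x=f(x)$ for the remaining time $T(1-d)$. The paper's version is terser---it simply says the left limit is obtained by ``assuming the flow does not spike at $t=dT$'' and the right limit by ``adding a new spike at $t=dT$''---while you make the underlying continuity of the spike times explicit; your added care about transversality ($f(\theta)+A>0$) and the edge case $n=1$ (where there is no $(n-1)$-th spike and $\phi(dT;x_0)=\varphi(dT;x_0;A)$ directly) would round out the argument nicely.
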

\begin{proof}
The trajectory of $\Sigma_n$ is such that the flow $\phi(t;\Sigma_n)$ reaches
the threshold for the $n$th time exactly at $t=dT$. The limiting value 
$\s(\Sigma_n^-)$ is obtained assuming that the flow does not spike at $t=dT$
(the threshold is almost reached). Hence, one just needs to integrate the flow
$\varphi(t;x_0;0)$ from $t=dT$ to $t=T$ with initial condition $x_0=\theta$.
This gives eq.~\eqref{eq:leftimage}.\\
On the other hand, the right image is computed adding a new spike at $t=dT$.
Hence, one has only to proceed identically but replacing the initial condition
by the one given by the reset, $x_0=0$.
\end{proof}

\subsection{Border collision bifurcations of spiking fixed points}\label{sec:bif_struct}
As noted in the previous section, non spiking periodic orbits (without
interaction with the threshold) exist for $A>0$ small enough and $d=0$ or $d=1$.
Hence, the natural parameter space where to study the possible bifurcations
leading to spiking dynamics is $d\times A$.  Equivalently, we will consider
$d\times 1/A$ instead. This is because, in this space, the bifurcation curves
will be bounded.

In order to show that the bifurcation scenario in the parameter space
$d\times1/A$ for system~\SYSTEMWR{} is equivalent to the one shown in
fig.~\ref{fig:d-invA_generic} and described in~\S\ref{sec:results}, we first
show the existence of an infinite number of regions for which $\s$ possesses
fixed points. These fixed points are located at different domains $S_n$ defined
in~\eqref{eq:sets}. These regions in the parameter space are the gray regions in
fig.~\ref{fig:d-invA_generic}. As shown in the next result, when they exist,
these fixed points are unique and are located at one side of some discontinuity
$\Sigma_n$. By varying parameter values, they bifurcate when entering the white
regions in figure~\ref{fig:d-invA_generic_regions}.\\
The next proposition provides the existence of unique fixed points, $\bx_n\in
S_n$ for any $n\ge0$.

\begin{prop}\label{pro:existence_of_fp}
Let $d\in(0,1)$ and $n\ge 0$. Then, there exists some value of $A$ for which the
stroboscopic map $\s$ defined in~\eqref{eq:strobo_map} possesses a fixed point
$\bx_n\in S_n$. When it exists, this fixed point becomes the only invariant
object of $\s$.
\end{prop}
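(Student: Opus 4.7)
The plan is to single out, for each $n\ge 1$, one privileged value $A=A_n^*$ of the amplitude at which the state space $[0,\theta)$ collapses to a single partition $S_n$. At that amplitude, $\s$ becomes globally smooth on $[0,\theta)$ and a single application of Banach's contraction principle delivers, in one stroke, both existence of $\bx_n$ and its uniqueness as the only invariant object of $\s$.

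For $n=0$, Proposition~\ref{prop:periodic_orbit} already does the job: for $A$ small enough, $Q=Ad$ is small and there is a $T$-periodic subthreshold orbit, which is a fixed point of $\s$ in $S_0=[0,\theta)$. For $n\ge 1$, I would first study the firing time $\delta(A)$ defined by~\eqref{eq:delta}. Rewriting this as $\int_0^\theta dx/(f(x)+A)=\delta$, exactly as in the proof of Lemma~\ref{lem:monotonicity}, $\delta$ is smooth and strictly decreasing in $A$ on $(-f(\theta),\infty)$ with range $(0,\infty)$. The intermediate value theorem then produces a unique $A_n^*$ with $n\,\delta(A_n^*)=dT$. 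At this amplitude, the trajectory from $x_0=0$ spikes at $t=k\delta=kdT/n$ for $k=1,\dots,n$, its last spike landing precisely at the switching time $t=dT$; hence $\Sigma_n=0$ by~\eqref{eq:boundaries}, and Lemma~\ref{lem:unique_Sigma_n}~iii) gives $S_n=[0,\theta)$.

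On this single partition, $\s$ is a fixed composition of the smooth flows $\varphi(\cdot;\cdot;A)$, $\varphi(\cdot;\cdot;0)$ and the $n$ resets; the first-spike time $t_1(x_0)$ depends smoothly on $x_0$ by the implicit function theorem, so $\s\in C^\infty([0,\theta))$. Hypothesis H.2 provides two ingredients: each free-flow variational $\varphi_x=\exp\int f'\,ds$ lies in $(0,1)$, and after the last spike the flow $\dot x=f(x)$ attracts $\bx<\theta$, so $\s([0,\theta])\subseteq[0,\theta]$. The key step is to show the combined effect yields $0<\s'<1$ uniformly on $[0,\theta]$; once this is in hand, $\s$ is a strict contraction of a compact interval into itself, Banach's theorem produces a unique $\bx_n\in(0,\theta)\subset S_n$, and being the unique fixed point of a global contraction, $\bx_n$ is automatically the only invariant object of $\s$.

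The main obstacle is the derivative estimate $\s'<1$. Differentiating through the composition, only $t_1$ depends on $x_0$, yielding a formula of the shape $\s'(x_0)=\varphi_x(T-dT;x^*;0)\,\varphi_x(t_1;x_0;A)\,\dfrac{f(x^*)+A}{f(\theta)+A}$ with $x^*=\varphi(dT-t_n;0;A)$, in which the spike-timing factor $(f(x^*)+A)/(f(\theta)+A)$ itself exceeds $1$ since $f$ is decreasing and $x^*<\theta$. It must therefore be dominated by the accumulated exponential contraction over the two free-flow segments of total length $T-(n-1)\delta$. In the linear case $f(x)=-\alpha x$ this is transparent: the factors telescope to $\s'=e^{-\alpha(T-n\delta)}=e^{-\alpha T(1-d)}<1$ at $A_n^*$. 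In the general monotone setting, a time-change along the reference trajectory together with the constraint $n\delta=dT<T$ should deliver the same bound. Once this uniform contraction is secured, existence and uniqueness at $A=A_n^*$ are immediate, and the persistence of $\bx_n$ over an open region of $(d,1/A)$ — the gray regions of fig.~\ref{fig:d-invA_generic_regions} — follows from the implicit function theorem applied to $\s(x;A)-x=0$.
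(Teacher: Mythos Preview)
Your strategy coincides with the paper's: locate the special amplitude at which $[0,\theta)=S_n$ (the paper calls it $A_n^{\cal C}(d)$ and reaches it by induction on $n$ combined with Lemma~\ref{lem:monotonicity}; your direct characterisation via $n\,\delta(A_n^*)=dT$ yields the same value) and then invoke contraction on the single smooth branch.

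Your ``main obstacle'' dissolves once you notice that at $A_n^*$ the state at $t=dT$ is exactly $x_0$ again. Since $\varphi(t_1;x_0;A)=\theta=\varphi(\delta;0;A)$ and the flow is autonomous, uniqueness of solutions gives $x_0=\varphi(\delta-t_1;0;A)$; and because $n\delta=dT$, the time between the $n$-th reset and the switch is $dT-(t_1+(n-1)\delta)=\delta-t_1$, whence $x^*=\varphi(\delta-t_1;0;A)=x_0$. Thus $\s(x_0)=\varphi(T(1-d);x_0;0)$ is simply the time-$T(1-d)$ map of $\dot x=f(x)$, and $\s'(x_0)=\exp\int_0^{T(1-d)}f'(\varphi(s;x_0;0))\,ds\in(0,1)$ by H.2. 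Equivalently, the one-dimensional flow identity $\varphi_x(t;x_0;A)=\dfrac{f(\varphi(t;x_0;A))+A}{f(x_0)+A}$ makes your expansion factor $(f(x^*)+A)/(f(\theta)+A)$ cancel exactly against $\varphi_x(t_1;x_0;A)=(f(\theta)+A)/(f(x_0)+A)$. No time-change argument is needed.

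There remains a genuine gap in the uniqueness clause. The proposition asserts that $\bx_n$ is the only invariant object of $\s$ \emph{whenever it exists}, not merely at $A=A_n^*$. Away from $A_n^*$ the domain splits into two partitions ($S_{n-1}\cup S_n$ or $S_n\cup S_{n+1}$) with a discontinuity, and your Banach argument no longer applies globally; the implicit function theorem gives persistence of $\bx_n$ but says nothing about possible additional periodic orbits straddling $\Sigma_n$ or $\Sigma_{n+1}$. The paper closes this with the negative gap $\s(\Sigma_n^-)>\s(\Sigma_n^+)$ from Lemma~\ref{lem:lateral_values}: with an increasing contraction on each piece and a downward jump at the boundary, the piece not containing the fixed point is mapped into the basin of $\bx_n$, ruling out any further invariant set. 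You should add this argument.
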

\unitlength=\textwidth
\begin{figure}
\begin{center}
\begin{picture}(1,0.8)
\put(0,0.8){
\subfigure[\label{fig:T-per_R-bif}]
{\includegraphics[angle=-90,width=0.5\textwidth]{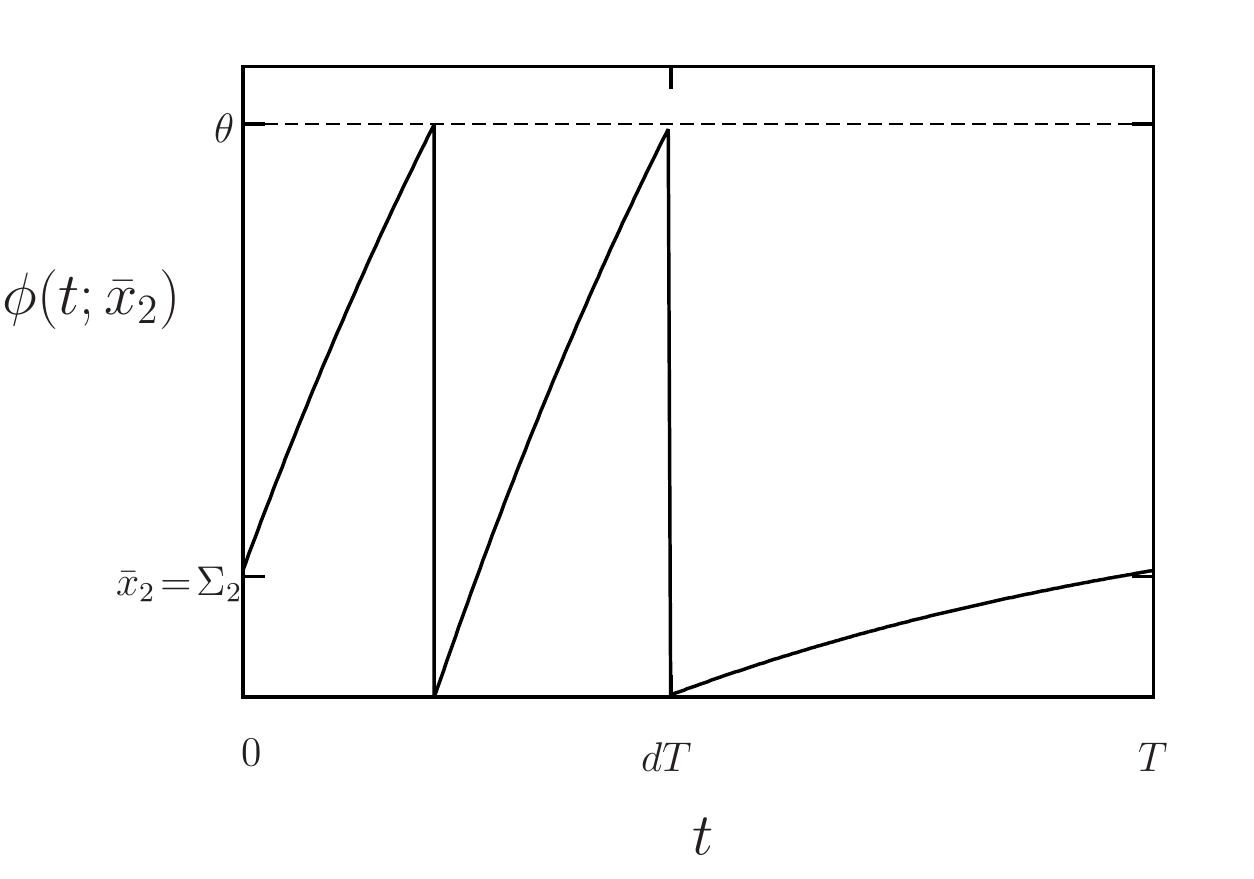}}
}
\put(0.5,0.8){
\subfigure[\label{fig:T-per_2}]
{\includegraphics[angle=-90,width=0.5\textwidth]{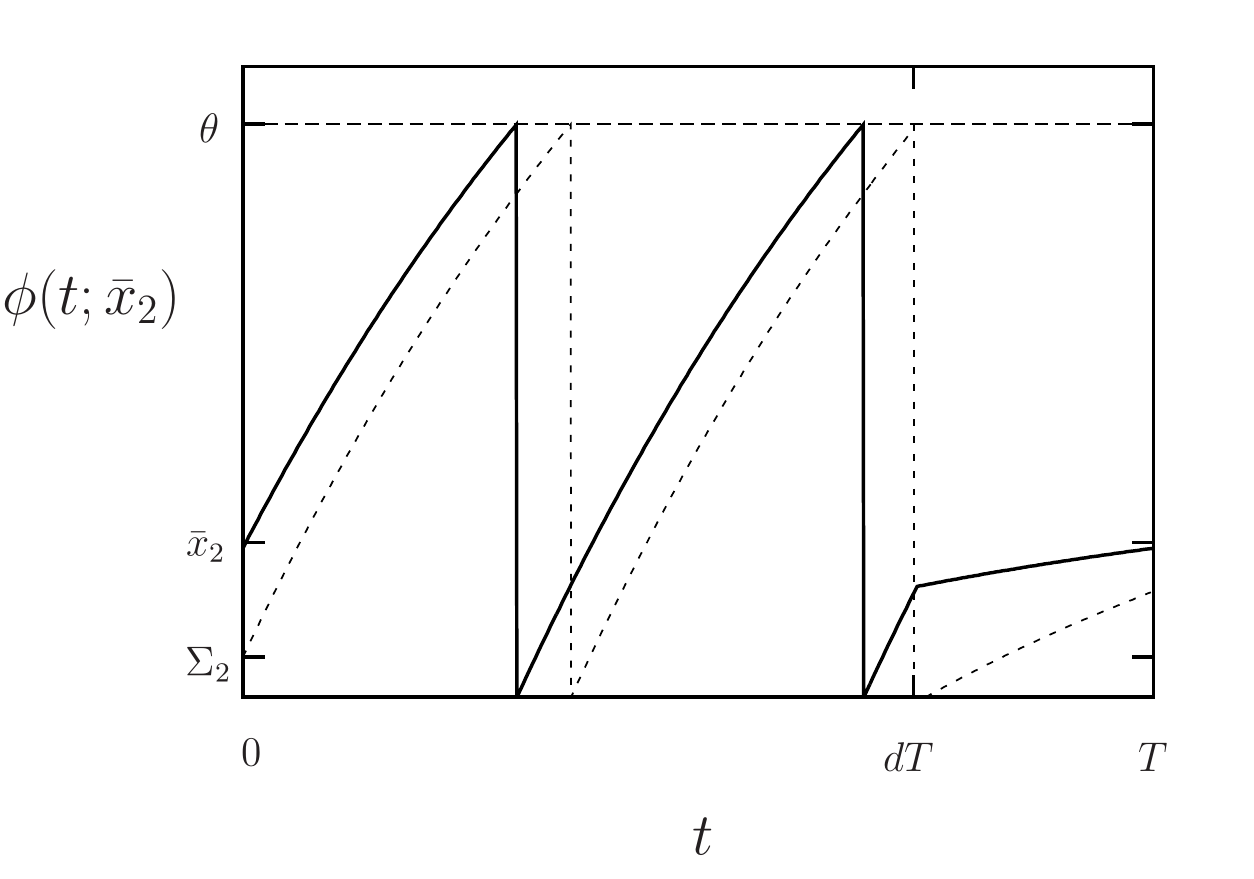}}
}
\put(0,0.4){
\subfigure[\label{fig:T-per_3}]
{\includegraphics[angle=-90,width=0.5\textwidth]{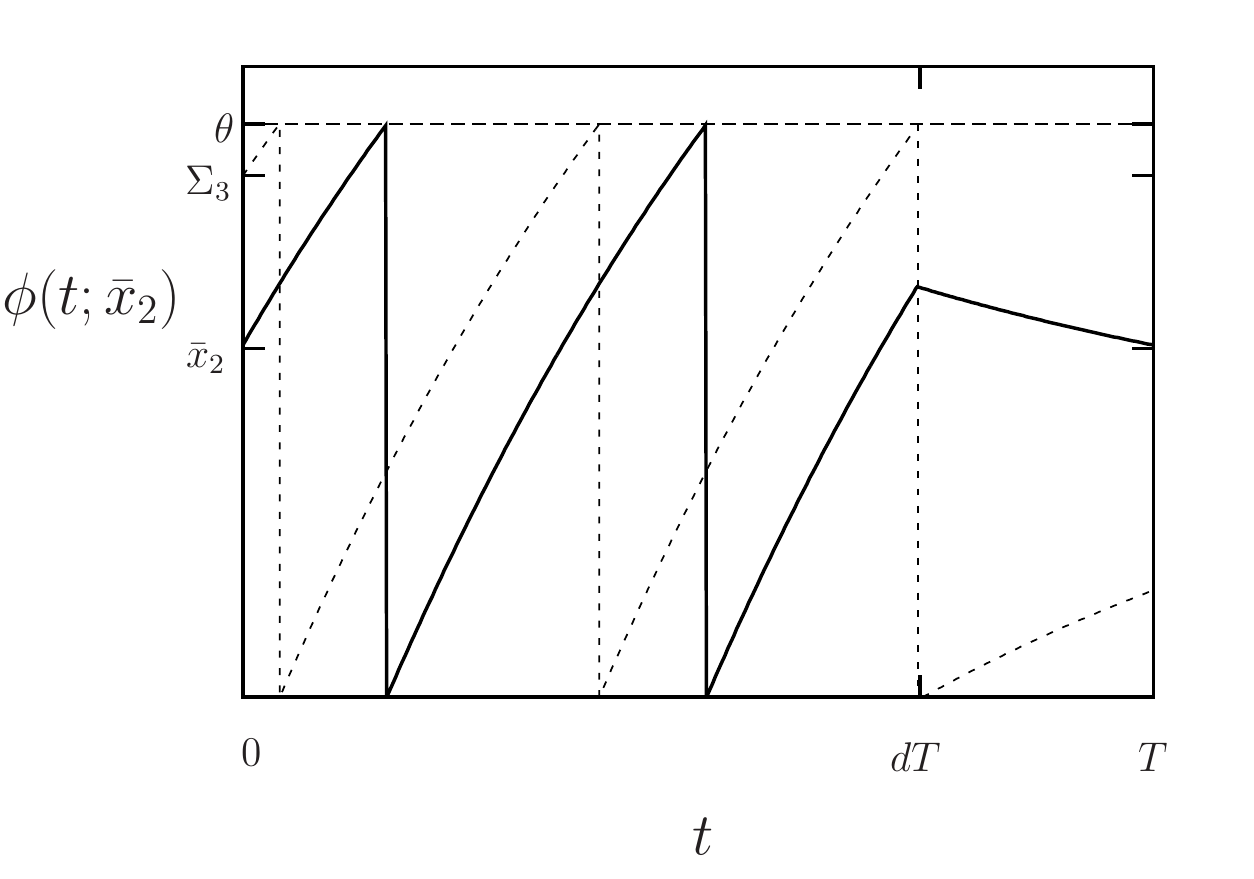}}
}
\put(0.5,0.4){
\subfigure[\label{fig:T-per_L-bif}]
{\includegraphics[angle=-90,width=0.5\textwidth]{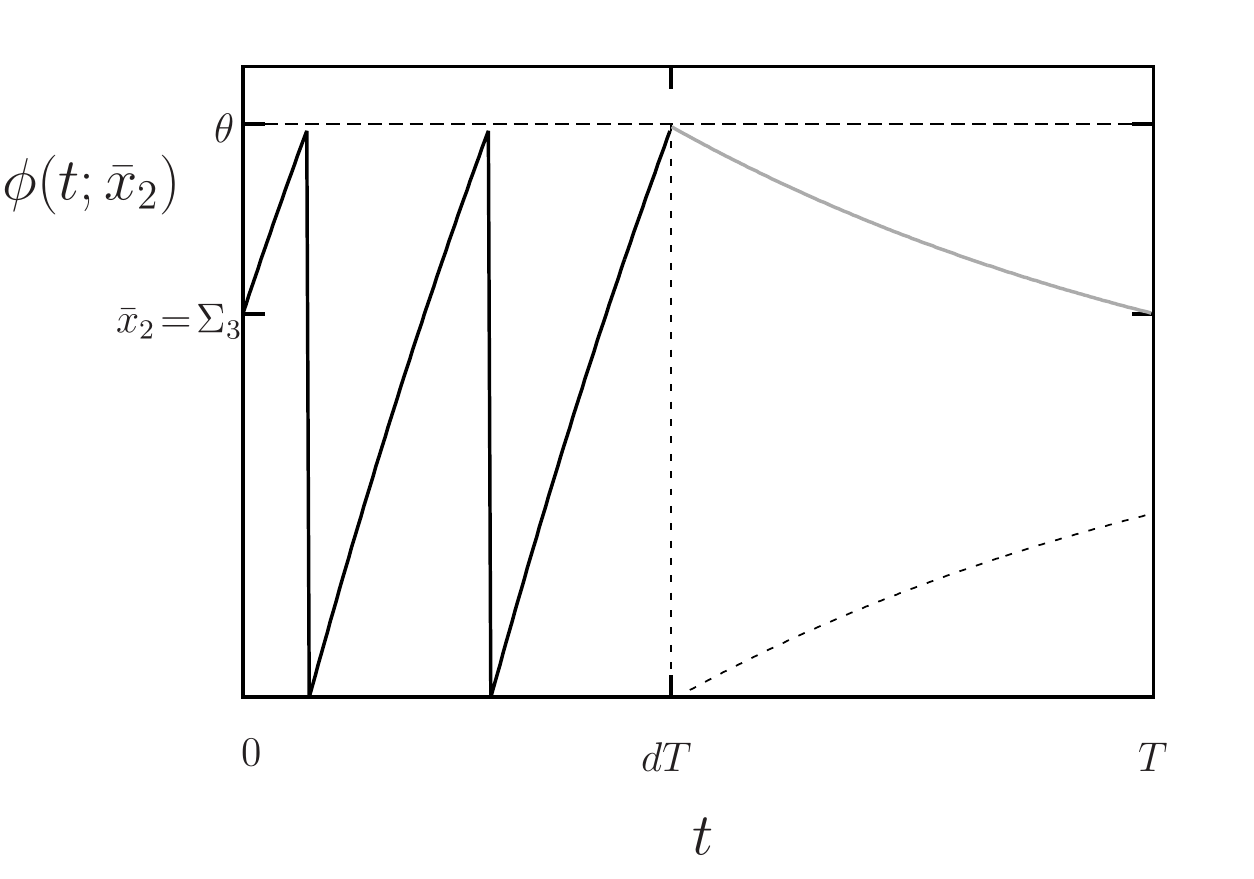}}
}
\end{picture}
\end{center}
\caption{$T$-periodic orbit spiking twice per period (fixed point $\bx_2$ of the
stroboscopic map) a its bifurcations when $A$ is varied along
$(A_2^\R(d),A_2^\LL(d))$. It undergoes border collision bifurcations when it
collides with the boundaries $\Sigma_2$ and $\Sigma_3$ (a) and (d),
respectively.  The periodic orbit shown in (d) is its limit when
$\bx_2\to\Sigma_3^-$; note that for $\bx_2=\Sigma_3$ it should be reset to $0$
at $t=dT$, this is why it is shown in gray. In (b) and (c), the trajectories of
these boundaries are shown in dashed lines; note that they collide with the
threshold at $t=dT$.  Parameter values for panel~(c) are the same as for point
$D$ of figure~\ref{fig:d-invA_generic_regions}.  The four figures are a in one
to one correspondence with the four figures of figure~\ref{fig:maps_bx2}, where
the stroboscopic map is shown.}
\label{fig:bif_T-per}
\end{figure}

\begin{figure}
\begin{picture}(1,0.8)
\put(0,0.8){
\subfigure[\label{fig:map_T_R-bif}]{\includegraphics[angle=-90,width=0.5\textwidth]
{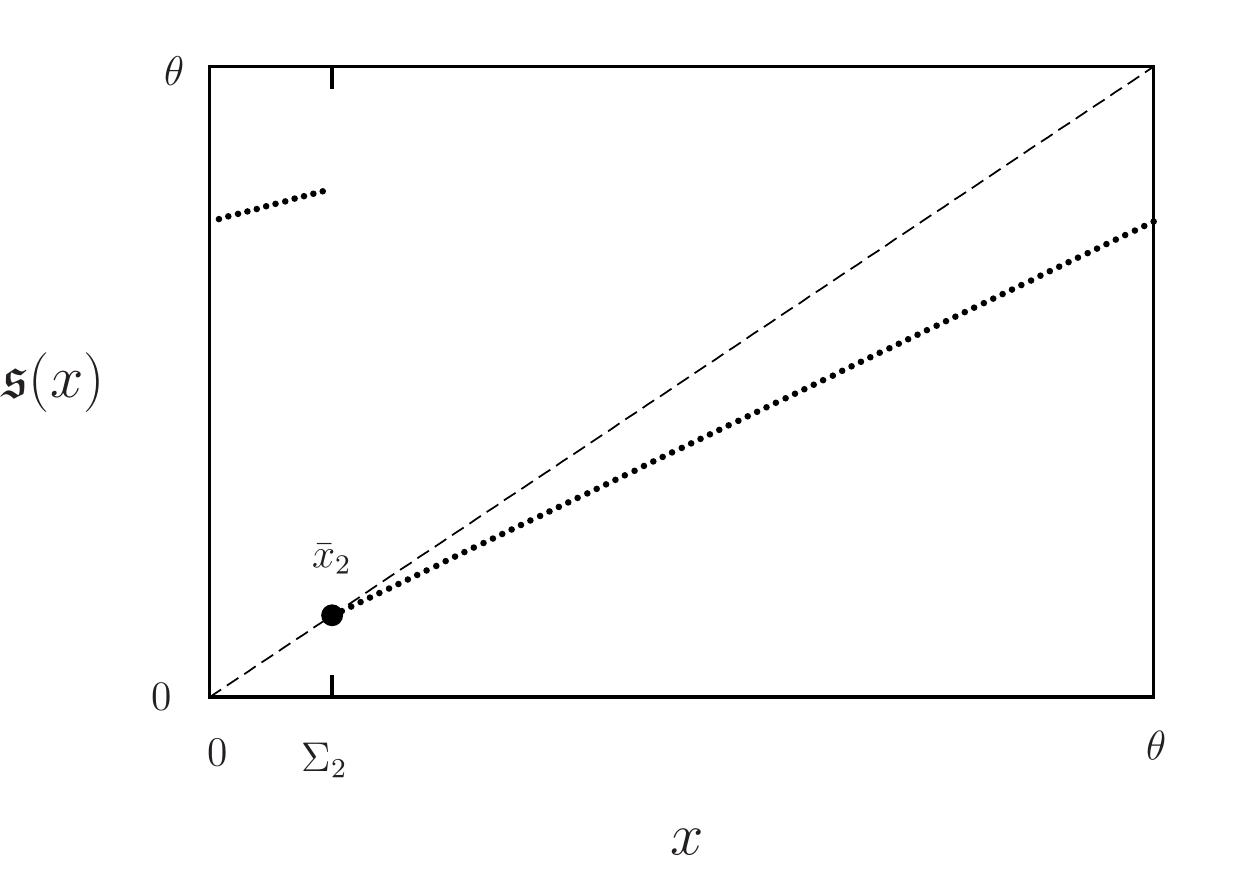}}
}
\put(0.5,0.8){
\subfigure[\label{fig:map2}]{\includegraphics[angle=-90,width=0.5\textwidth]
{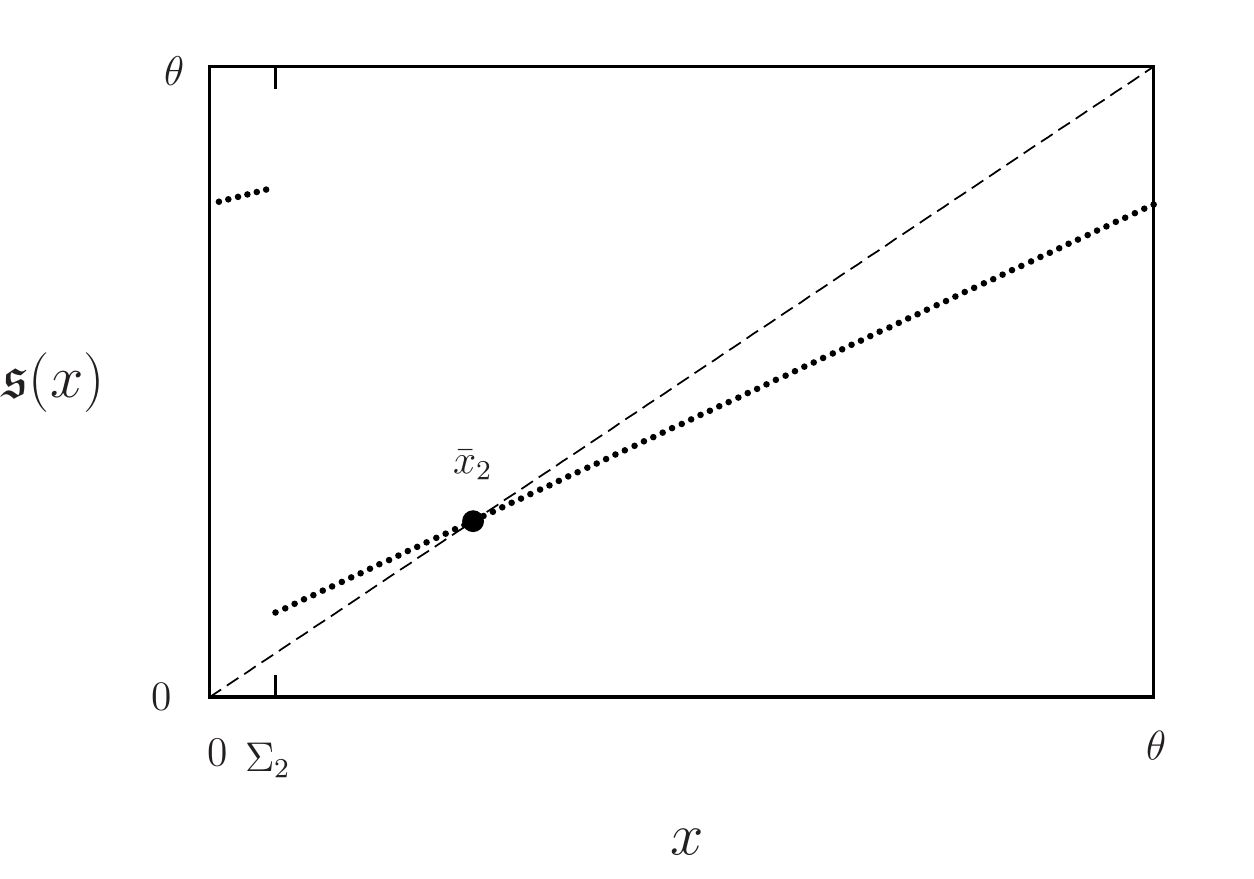}}
}
\put(0,0.4){
\subfigure[\label{fig:map3}]{\includegraphics[angle=-90,width=0.5\textwidth]
{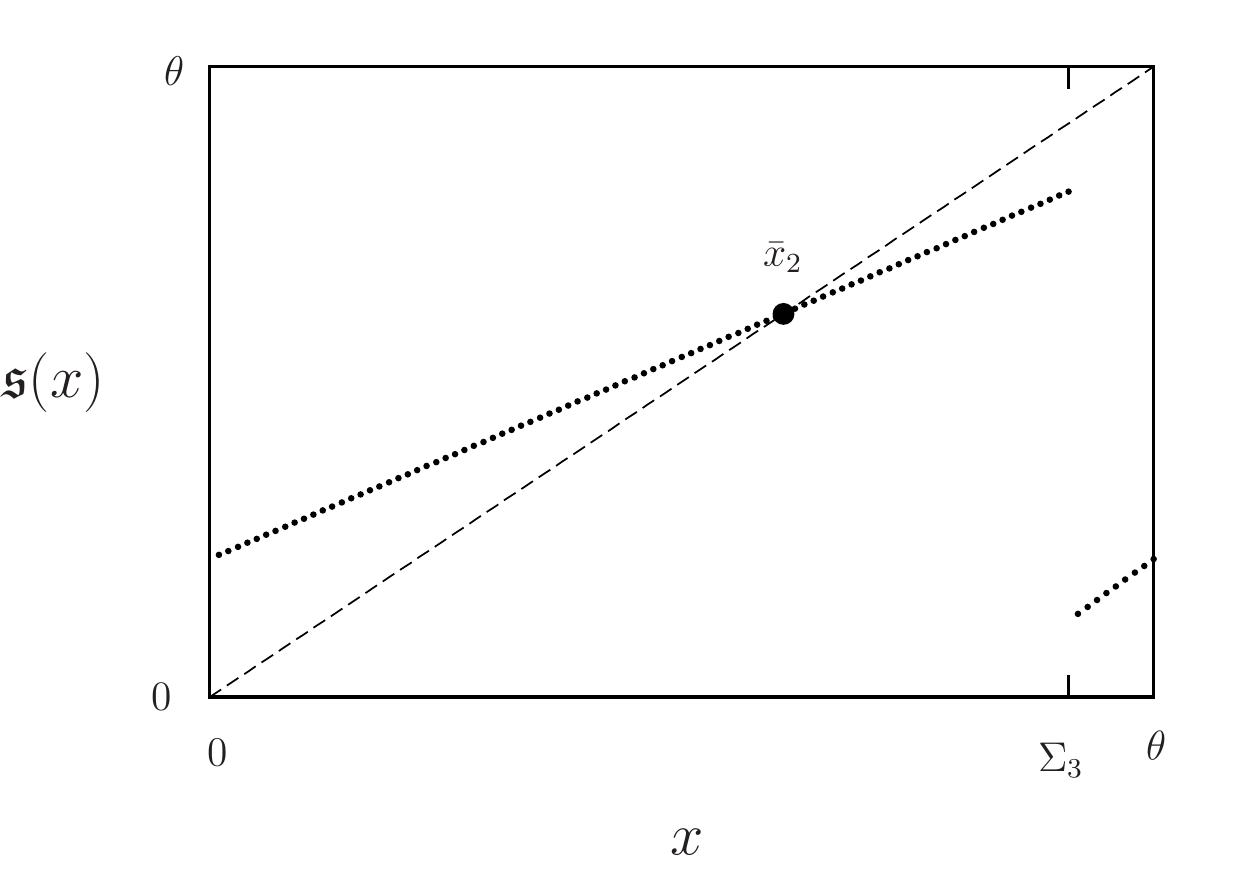}}
}
\put(0.5,0.4){
\subfigure[\label{fig:map_T_L-bif}]{\includegraphics[angle=-90,width=0.5\textwidth]
{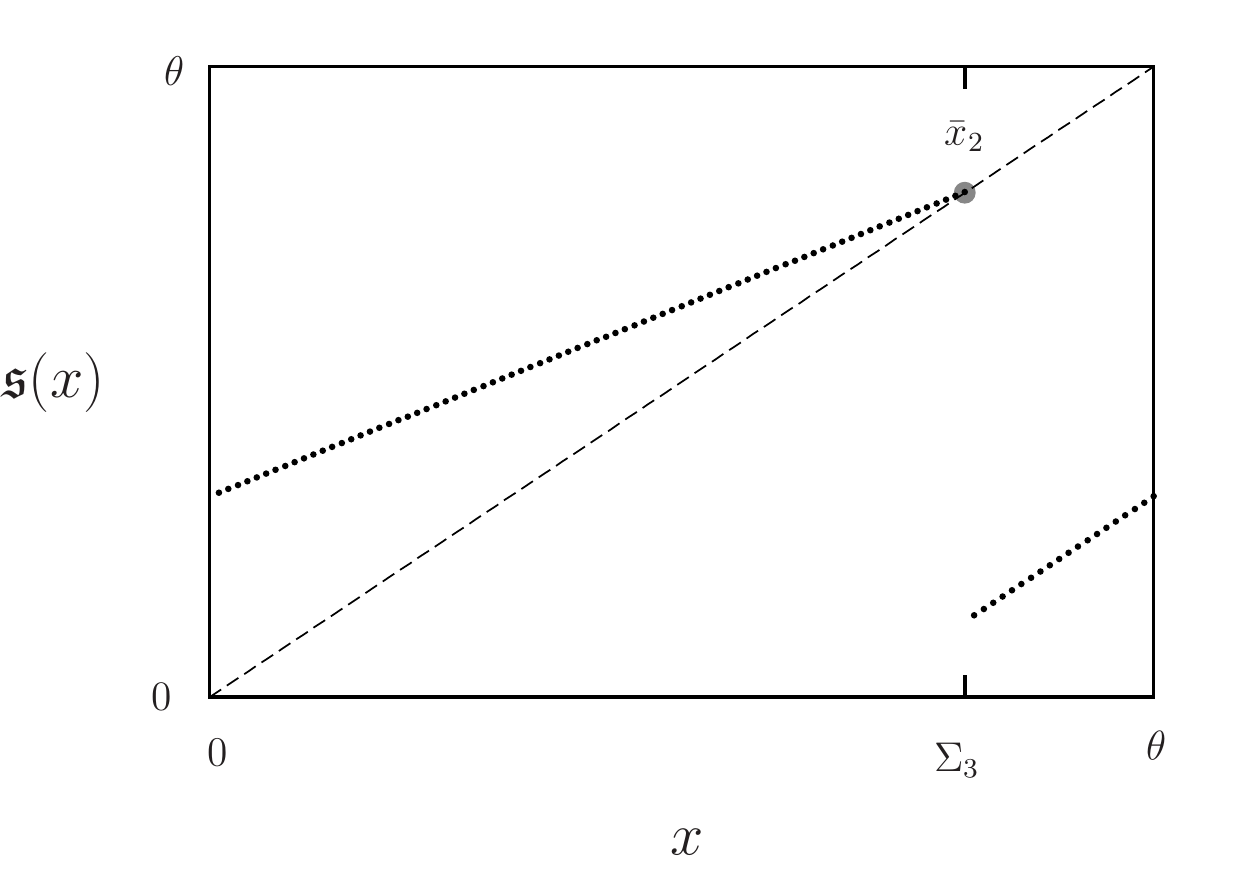}}
}
\end{picture}
\caption{Stroboscopic map for the $T$-periodic orbits shown in
figure~\ref{fig:bif_T-per}. In (a) and (d) the fixed point $\bx_2$ undergoes
border collision bifurcation when it collides with the boundaries $\Sigma_2$
from the right and $\Sigma_3$ from the left, respectively. Note that, in (d) the
fixed point is shown in gray to emphasize that the map takes indeed the value on
the right for $x=\Sigma_3$.  In (b)-(c) the boundary $\Sigma_2$ disappears and a
new boundary $\Sigma_3$ appears while the fixed point $\bx_2$ remains.}
\label{fig:maps_bx2}
\end{figure}
\begin{proof}
The existence of the fixed point $\bx_0\in S_0$ of the stroboscopic map
$\s$~\eqref{eq:strobo_map} for any $d\in(0,1)$ when $A>0$ is small enough comes
from proposition~\ref{prop:periodic_orbit}.
By induction we now show that, for any $n\ge0$,  there exist values of $A$ for
which one finds fixed points of $\s$, $\bx_n$, leading to periodic orbits
spiking $n$ times: $\bx_n\in S_n$. Assume that there exists a fixed point
$\bx_{n-1}\in S_{n-1}$. By if necessary increasing $A$, we can assume that
$S_{n-1}=[0,\Sigma_{n})$ ($\bx_{n-1}$ is located at the left of the
discontinuity). Then, by lemma~\ref{lem:monotonicity}, $\Sigma_{n}$ is a
monotonically decreasing function of $A$. Hence, by further increasing $A$, one
necessary finds some value, $A=A_{n}^{\cal C}(d)$ for which $\Sigma_{n}=0$. At
this point, by lemma~\ref{lem:unique_Sigma_n} \emph{iii)}, we get that
$[0,\theta)=S_{n}$ and the map $\s$ becomes continuous. Hence, as
$\s(S_{n})\subset S_{n}$ and $\s$ is a contracting map, there necessary exists a
fixed point $\bx_{n}\in S_{n}$ for $A=A_{n}^{\cal C}(d)$.

We now show that, when they exist, the fixed points $\bx_n$ are the only
possible invariant objects. On one hand note that the monotonicity of $f(x)$ and
the stability of $\bx\in(0,\theta)$ imply that $\s$ is monotonically increasing
and contracting in $S_n$.  On the other one we recall that, from
lemma~\ref{lem:unique_Sigma_n}, we use that at most two sets $S_{n-1}$ and $S_n$
or $S_n$ and $S_{n+1}$ can coexist. Hence, if $\s$ possesses a fixed point
$\bx_n\in S_n$, there exist only three possible situations:
\begin{enumerate}
\item $\bx_n\in(0,\theta)\subset S_n$
\item $[0,\Sigma_n)= S_{n-1}$ and
$\bx_n\in[\Sigma_n,\theta)= S_n$
\item $\bx_n\in(0,\Sigma_{n+1})\subset S_{n}$ and
$[\Sigma_{n+1},\theta)= S_{n+1}$
\end{enumerate}
Note that $\bx\in(0,\theta)$ implies that necessary $\bx_n\ne0$.\\
In the first case no other periodic points are possible because $\s$ is
monotonically increasing, contracting and continuous in all its domain.\\
In the second and third cases this becomes a direct consequence of the fact that
$\s$ has a negative gap at its discontinuities (illustrated in
fig.~\ref{fig:maps_bx2}); that is,
\begin{equation*}
\lim_{x\to(\Sigma_n)^-}\s(x)>\lim_{x\to(\Sigma_n)^+}\s(x),\,\forall
n\ge 0,
\end{equation*}
(see fig.~\ref{fig:boundary} for $n=3$). This is a direct consequence of
lemma~\ref{lem:lateral_values}.
\end{proof}

The next proposition provides us the bifurcation curves at which the fixed
points $\bx_n\in S_n$ given in proposition~\ref{pro:existence_of_fp} undergo
border collision bifurcations. That is, curves for which the fixed points
collide with the boundaries:
\begin{align*}
A\to A_n^\LL(d)&\Longrightarrow\bx_n\longrightarrow\left( \Sigma_n
\right)^-\\
A\to A_{n}^\R(d)&\Longrightarrow\bx_n\longrightarrow \left( \Sigma_{n-1}
\right)^+.
\end{align*}

\begin{prop}\label{pro:1T_po}
For any $0<d<1$ there exists a sequence
\begin{equation}
0<A_0(d)<A_1^\R(d)<A_1^\LL(d)<A_2^\R(d)<A_2^\LL(d)<\dots,
\label{eq:sequence_As}
\end{equation}
such that, for every $n>0$, the stroboscopic map $\s$~\eqref{eq:strobo_map}
possesses a unique fixed point $\bx_n\in S_n$ for
$A\in(A_n^\R(d),A_n^\LL(d))$. This fixed point undergoes a border collision
bifurcation at $A=A_n^\LL(d)$ and $A=A_n^\R(d)$.\\
The values $A_0(d)$, $A_n^\LL(d)$ and $A_n^\R(d)$ define smooth curves
fulfilling
\begin{align*}
&\lim_{d\to 0}A_0=\infty\\
&\lim_{d\to 0}A_n^{\R,\LL}(d)=\infty
\end{align*}
Hence these curves go through the origin of the parameter space $(d,1/A)$.
Moreover,
\begin{equation*}
\lim_{d\to 1}A_n^{\R}(d)=\lim_{d\to 1}A_n^{\LL}(d),\,n\ge1,
\label{eq:d1_limits}
\end{equation*}
and this limit becomes the value of $A$ for which
\begin{equation*}
\delta=\frac{T}{n},
\end{equation*}
where $\delta$ is defined in eq.~\eqref{eq:delta}.
\end{prop}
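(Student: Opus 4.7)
The plan is to build the sequence inductively, using the monotonicity of $\Sigma_n$ in $A$ from Lemma~\ref{lem:monotonicity} together with the fact that each restriction of $\s$ to a smooth branch $S_n$ is an increasing contraction, in order to track how the fixed point $\bx_n$ depends on $A$. For the base case $n=0$, the subthreshold fixed point $\bx_0$ exists for all sufficiently small $A>0$ by Proposition~\ref{prop:periodic_orbit}. As $A$ grows, the peak $\phi(dT;0)$ of the trajectory starting at $0$ rises monotonically (comparison principle for $\dot x = f(x)+A$) and first hits $\theta$ at a unique value $A_0(d)$, which by Lemma~\ref{lem:unique_Sigma_n} is exactly the $A$ at which $\Sigma_1 = 0$; hence $\bx_0$ ceases to exist beyond $A_0(d)$.

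For the inductive step, Proposition~\ref{pro:existence_of_fp} provides a seed value of $A$ at which $\bx_n \in S_n = [0,\theta)$. Applying the implicit function theorem to $\s(\bx_n;A,d) = \bx_n$ on the smooth branch, whose linearisation $1 - \s'(\bx_n)$ is nonzero by the contractivity $|\s'|<1$, shows that $\bx_n$ depends smoothly on $(A,d)$. Decreasing $A$ makes $\Sigma_n$ rise until it catches $\bx_n$ from below; combining this border collision condition with Lemma~\ref{lem:lateral_values} yields the explicit characterisation
\begin{equation*}
\Sigma_n(A_n^\R(d),d) = \s_+(d) = \varphi(T(1-d);0;0).
\end{equation*}
Symmetrically, increasing $A$ past the seed makes $\Sigma_{n+1}$ drop from $\theta$ until it catches $\bx_n$ from above, giving
\begin{equation*}
\Sigma_{n+1}(A_n^\LL(d),d) = \s_-(d) = \varphi(T(1-d);\theta;0).
\end{equation*}

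The strict inequalities in~\eqref{eq:sequence_As} then follow directly from the negative lateral gap $\s_-(d) > \s_+(d)$: inside $(A_n^\R, A_n^\LL)$ the fixed point persists and uniqueness is inherited from Proposition~\ref{pro:existence_of_fp}, whereas between $A_n^\LL$ and $A_{n+1}^\R$ the same boundary $\Sigma_{n+1}$ must traverse from $\s_-$ down to $\s_+$ along the strictly monotone map $A\mapsto\Sigma_{n+1}$, covering a positive distance. Smoothness of the three curve families is then a further application of the implicit function theorem to the defining equations $\varphi(dT;0;A)=\theta$ and $\Sigma_n(A,d)=\s_\pm(d)$, using the non-vanishing $\partial_A \Sigma_n$ from Lemma~\ref{lem:monotonicity}.

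For the asymptotic behaviour, as $d\to 0$ the pulse length $dT$ shrinks to zero, yet $n\ge 1$ spikes require $n\delta(A)\le dT$; since $\delta(A)>0$ tends to $0$ only as $A\to\infty$, this forces $A_n^{\R,\LL}(d)\to\infty$, and the analogous argument on $\varphi(dT;0;A)=\theta$ yields $A_0(d)\to\infty$. As $d\to 1$ the silent window shrinks, so $\s_+(d)\to 0$ and $\s_-(d)\to\theta$, and the two defining equations $\Sigma_n = \s_+$, $\Sigma_{n+1} = \s_-$ collapse in the limit onto the single condition that the orbit starting at $0$ completes exactly $n$ spikes in time $T$, namely $\delta(A)=T/n$; the two bifurcation curves merge at this common value. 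The main obstacle will be verifying uniformly in $n$ that these lateral characterisations remain valid and that $\s$ stays strictly contracting on each branch for every admissible $A$; both reduce to properties of the autonomous flows $\varphi(\cdot;\cdot;A)$ and $\varphi(\cdot;\cdot;0)$ that follow from the global monotonicity hypothesis~H.2.
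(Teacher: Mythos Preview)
Your overall strategy matches the paper's: track the discontinuity $\Sigma_n$ as a monotone function of $A$ and read off the border collisions from the lateral values $\s_\pm$, which are independent of $A$ and $n$. The inductive step and the characterisations $\Sigma_n(A_n^\R)=\s_+$ and $\Sigma_{n+1}(A_n^\LL)=\s_-$ are exactly what the paper uses, as is the implicit function argument for smoothness and the $d\to1$ limit.

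There is, however, a genuine error in your base case. You define $A_0(d)$ as the value at which the trajectory starting from $0$ first reaches $\theta$ at time $dT$, i.e.\ the value at which $\Sigma_1=0$. That is not where $\bx_0$ bifurcates. The non-spiking fixed point $\bx_0\in S_0=[0,\Sigma_1)$ collides with the boundary when $\bx_0=\Sigma_1$, equivalently when $\s(\Sigma_1^-)=\Sigma_1$, i.e.\ when $\Sigma_1=\s_-$; this is strictly \emph{earlier} than the value you wrote down. In fact your $A_0$ is what the paper calls $A_1^{\cal C}$, the value at which $S_1=[0,\theta)$ and $\bx_1$ already exists. With your definition the ordering~\eqref{eq:sequence_As} fails outright: since $\Sigma_1$ decreases through $\s_-$, then $\s_+$, then $0$ as $A$ grows, one has (paper's $A_0$) $<A_1^\R<$ (your $A_0$), so your $A_0$ would lie \emph{above} $A_1^\R$. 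Note also that your own general formula, specialised to $n=0$, gives the correct answer $\Sigma_1(A_0^\LL)=\s_-$, contradicting your base-case paragraph; and your $d\to0$ argument for $A_0$ invokes $\varphi(dT;0;A)=\theta$, which again is the equation for $\Sigma_1=0$, not for the actual border collision. The fix is simply to treat $A_0$ as the instance $n=0$ of your $A_n^\LL$ formula, i.e.\ $\Sigma_1=\s_-$, and drop the separate (incorrect) description.
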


\begin{proof}
Let $\s_-$ and $\s_+$ be as in eqs.~\eqref{eq-defsmin} and~\eqref{eq-defsplus}:
\begin{align*}
\s_-&=\varphi(T(1-d);\theta;0)\\
\s_+&=\varphi(T(1-d);0;0).
\end{align*}
As given in lemma~\ref{lem:lateral_values}, if the stroboscopic map exhibits a
discontinuity, $\Sigma_n\in(0,\theta)$, then these values coincide with the
lateral images of $\Sigma_n$:
\begin{align*}
\s(\Sigma_n^-)&=\s_-\\
\s(\Sigma_n^+)&=\s_+.
\end{align*}
For $0<d<1$ and $n\ge1$, let $A_n^{\cal C}(d)$ be as in the proof of
prop.~\ref{pro:existence_of_fp}, i.e. the value of $A$ defined by the condition
$\Sigma_{n}=0$. By lemma~\ref{lem:monotonicity} and the implicit function
theorem $A_n^{\cal C}(d)$ is a smooth function of $d$.  We begin by discussing
the bifurcation sequence occurring for some $0< d<1$ and $0<A<A_1^{\cal C}(d)$.
The sequence of bifurcations that we now describe are illustrated in
fig.~\ref{fig:p-orbits} for a particular example.\\
First note that for $A$ sufficiently small, $[0, \theta)= S_0\cup S_1$ and
$\Sigma_1> \s_-=\s(\Sigma_1^-)$.  In this case the graph of $\s$ is as shown in
figure~\ref{fig:map3} (for $n=3$) and clearly the fixed point $\bar x_0$
contained in $S_0$ must exist. By lemma~\ref{lem:monotonicity}, when $A$ is
increased $\Sigma_1$ decreases with non-zero speed. Hence there exists a unique
value of $A=A_0$ such that $\Sigma_1=\s_-=\s(\Sigma_1^-)$ (see
figure~\ref{fig:map_T_L-bif} for $\Sigma_3$). In other words, the fixed point
$\bx_0$ undergoes a border collision bifurcation as it collides with the
boundary $\Sigma_1$ on its left.  Moreover, by lemma~\ref{lem:monotonicity} and
the implicit function theorem, the equation $\Sigma_1=\s_-$ defines a smooth
function $A_0(d)$.\\
For $A>A_0(d)$ the map $\s$ is as shown in figure~\ref{fig:boundary_map} and the
fixed point $\bar x_0$ no longer exists.  As $A$ is further increased $\Sigma_1$
crosses $\s_+$ for some value $A=A_1^\R$. For this value, the fixed point
$\bx_1$ undergoes a border collision bifurcation as it collides with the
boundary $\Sigma_1$ on its right (see figure~\ref{fig:map_T_R-bif} for $\bx_2$).
Similarly as above, the equation $\Sigma_1=\s_+$ defines a smooth function
$A=A^{\cal R}_1(d)$ and for $A>A^{\cal R}_1(d)$ the fixed point $\bar x_1$
exists.  Finally, for $A=A_1^{\cal C}(d)$ the map is continuous on the entire
$(0,\theta)$ with fixed point $\bar x_1$.

We now repeat the same argument to show that the same bifurcation sequence
occurs for $A_{n}^{\cal C}(d)<A\le A_{n+1}^{\cal C}(d)$, $n\ge 1$.  Indeed, for
$A>A_{n}^{\cal C}(d)$ but close to $A_{n}^{\cal C}(d)$ we have $\Sigma_{n}>
\s_-$.  In this case the graph of $\s$ is again as shown in
figure~\ref{fig:map3} and clearly the fixed point $\bar x_{n-1}$ contained in
$S_{n-1}$ must exist. By lemma~\ref{lem:monotonicity}, when $A$ is increased
$\Sigma_{n}$ decreases with non-zero speed. Hence there exists a unique value of
$A=A^{\cal L}_{n-1}$ such that  $\Sigma_{n}= \s_-=\ts(\Sigma_n^-)$ (see
figure~\ref{fig:map_T_L-bif} for $n=3$). For this value of $A$, the fixed point
$\bx_{n-1}$ undergoes a border collision as it collides with the boundary
$\Sigma_n$ on its left.  Moreover, by lemma~\ref{lem:monotonicity} and the
implicit function theorem, the equation $\Sigma_{n}= \s_-=\s(\Sigma_n^-)$
defines a smooth function $A^{\cal L}_{n}(d)$.\\
For $A>A^{\cal L}_{n-1}(d)$ the map $\s$ is as shown in
figure~\ref{fig:boundary_map} and the fixed point $\bar x_{n-1}$ no longer
exists. As $A$ is further increased, there exists some value $A=A_{n}^\R(d)$ for
which $\Sigma_{n}$ crosses $\s_+$, as in figure~\ref{fig:map_T_R-bif}. For this
value, the fixed point $\bx_n$ undergoes a border collision bifurcation as it
collides with the boundary $\Sigma_n$ on its right. Similarly as above the
equation $\Sigma_{n}= \s_+=\s(\Sigma_n^+)$ defines a smooth function $A=A^{\cal
R}_n(d)$ and for $A>A^{\cal R}_n(d)$ the fixed point $\bar x_{n}$ exists.
Finally, for $A=A_{n}^{\cal C}(d)$ the map is continuous on the entire
$(0,\theta)$ with fixed point $\bar x_n$.

Finally, note that due to the fact that the lateral values of $\Sigma_n$ by $\s$
($\s_-$ and $\s_{+}$) do not depend on $A$ and, by lemma~\ref{lem:monotonicity},
$\Sigma_n$ monotonically decreases with $A$, the bifurcations described above
are unique, as $\s(\Sigma_n^\pm)=\s_\pm$ can occur only once.

As will be shown in next section, the periodic orbits that exist in the
intervals of the form $(A_{n-1}^\LL(d),A_{n}^\R(d))$ are given by the period
adding strucutre.\\

We now discuss the limiting values of the curves $A_n^{\R,\LL}(d)$ when $d\to0$
and $d\to 1$.

Let $A_0(d)$ be the curve where the fixed point $\bx_0\in S_0$
undergoes a border collision. As discused above, this occurs when the fixed
point $\bx_0$ collides with the boundary:
\begin{align}
\varphi(dT;\bx_0;A_0)&=\theta\label{eq:bif_condition_A0}\\
\varphi(T-dT;\theta;0)&=\bx_0,\nonumber
\end{align}
where $\varphi(t;x;A)$ is the flow associated with system $\dot{x}=f(x)+A$.  Due
to the monotonicity of $f(x)$, $\varphi(dT;x;A)$ is a monotonotonically
increasing function of $A$ for $A\in [A_0,A_1^\R)$ for all $x\in[0,\theta)$.
Hence, if $d\to 0$ then $A_0\to\infty$ in order to keep
equation~\eqref{eq:bif_condition_A0} satisfied. Therefore the curve defined by
$A_0(d)$ goes through the origin of the parameter space $d\times 1/A$. The
sequence given in eq.~\eqref{eq:sequence_As} implies that all the other
bifurcation curves given by $A_n^{\R,\LL}(d)$ also go through this point.

We now focus on $d\to1$. Let $A\in(A_n^\R(d),A_n^\LL(d))$ and hence there exists
a fixed point of $\s$, $\bx_n\in S_n$. In order to see~\eqref{eq:d1_limits} we
will show that, if there exists such a fixed point, then necessary $\s(x)\to x$
when $d\to1$; that is, if a fixed point exists, then the stroboscopic map $\s$
tends to be the identity when $d\to1$. If that's the case, then, both
bifurcations tend to occur at the same time and hence
\begin{equation*}
A_n^\R(d)-A_n^\LL(d)\to 0,
\end{equation*}
and both limits exists, which proves eq.~\eqref{eq:d1_limits}.\\
To see that $\s$ tends to be the identity when $d\to1$ if $\bx_n\in S_n$ we
first recall that, for $d=1$ the system~\SYSTEMWR{} becomes the autonomous
system $\dot{x}=f(x)+A$ plus the reset condition~\eqref{eq:reset_condition}. As
$A\in(A_n^\R(d),A_n^\LL(d))$ is large enough to make the system exhibit spikes,
if $d=1$ there exists only a periodic orbit with period $\delta=\delta(A)$
defined in~\eqref{eq:delta}. Moreover, all points in $[0,\theta)$ belong to this
periodic orbit because of the reset. As a consequence, all initial conditions in
$[0,\theta)$ are fixed points of the time-$\delta$ return map and this map is
hence the identity.\\
We now write the conditions for the existence of the fixed point $\bx_n$ in
terms of the flow:
\begin{equation*}
\begin{aligned}
\varphi(t_n;\bx_n;A)&=\theta\\
\varphi(dT-t_n-(n-1)\delta;0;A)&=x'\\
\varphi(T-dT;x';0)&=\bx_n.
\end{aligned}
\end{equation*}
Clearly, when $d\to1$, $x'\to\bx_n$ and $A$ tends to be such that $T$ is a
multiple of  $\delta(A)$; indeed 
\begin{equation*}
n\delta(A)T\to T.
\end{equation*}
As a consequence, for this limiting value of $A$, the time-$T$ return map, $\s$,
also tends to be the identity in $S_n$.
\end{proof}

\subsection{Period adding structures}\label{sec:period_adding}
In this section we study the invariant objects (periodic orbits) located in the
regions in the parameter space $d\times 1/A$ nested in between fixed points of
the stroboscopic map (gray regions in fig.~\ref{fig:d-invA_generic_regions}).\\
As announced in~\S\ref{sec:results}, these are organized by period adding
structures. Using the background on this topic provided by
theorem~\ref{theo:adding}, this is a direct consequence of the following
proposition, which states that the stroboscopic map~\eqref{eq:strobo_map} can be
written as the normal form given in eq.~\eqref{eq:normal_form} for parameter
values between two consecutive regions where fixed points exist. This will be a
consequence of the fact that all fixed points simultaneously undergo border
collision bifurcations at the origin of the parameter space $d\times 1/A$.

\begin{prop}\label{pro:adding}
Let
\begin{equation}
\begin{array}{cccc}
\mu:&[0,1]&\longrightarrow&\RR^{2}\\
&\lambda&\longmapsto&\left( d(\lambda),B(\lambda) \right)
\end{array}
\label{eq:param_curve}
\end{equation}
be a $C^\infty$ parametrization of a curve in the parameter space $d\times 1/A$
such that $\mu(0)=(d_0,B_0)$ and $\mu(1)=(d_1,B_1)$ fulfilling
\begin{align*}
d_i&\in(0,1)\\
B_0&=\frac{1}{A_n^\LL(d_0)}\\
B_1&=\frac{1}{A_{n+1}^\R(d_1)},
\end{align*}
for some $n>0$, and
\begin{equation*}
B'(\lambda)<0,\,\lambda\in(0,1).
\end{equation*}
Let $\s_\lambda$ be the map obtained by applying the reparametrization $\mu$ to
$\s$.  Then, the bifurcation diagram associated with the map $\s_\lambda$ for
$\lambda\in[0,1]$ follows a period adding as described for the
map~\eqref{eq:normal_form}.
\end{prop}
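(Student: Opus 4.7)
The strategy is to reduce the stroboscopic map $\s$, restricted to the parameter strip between the bifurcation curves $A=A_n^\LL(d)$ and $A=A_{n+1}^\R(d)$, to the normal form~\eqref{eq:normal_form} and then invoke Theorem~\ref{theo:adding}. Throughout this strip Lemma~\ref{lem:unique_Sigma_n} guarantees that $\s$ has exactly one discontinuity, located at $\Sigma_{n+1}\in(0,\theta)$, separating $S_n$ and $S_{n+1}$. Furthermore, the two characterizations of $A_n^\LL$ and $A_{n+1}^\R$ in the proof of Proposition~\ref{pro:1T_po} read $\Sigma_{n+1}=\s_-$ and $\Sigma_{n+1}=\s_+$ respectively, and by Lemma~\ref{lem:monotonicity} $\Sigma_{n+1}$ interpolates monotonically between these two values as $A$ increases.

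First I would translate the discontinuity to the origin by $y=x-\Sigma_{n+1}(\lambda)$, and use Lemma~\ref{lem:lateral_values} to extract the jump. This rewrites the reparametrized map as
\begin{equation*}
\s_\lambda(y)=\begin{cases}\mu_\LL(\lambda)+g_\LL(y;\lambda),& y<0,\\ -\mu_\R(\lambda)+g_\R(y;\lambda),& y>0,\end{cases}
\end{equation*}
with $g_\LL(0;\lambda)=g_\R(0;\lambda)=0$, and with the reparametrization
\begin{equation*}
\mu_\LL(\lambda)=\s_-(d(\lambda))-\Sigma_{n+1}(\lambda),\qquad \mu_\R(\lambda)=\Sigma_{n+1}(\lambda)-\s_+(d(\lambda)).
\end{equation*}
The smoothness of this change of variables and of the reparametrization follows from the implicit function theorem together with the smoothness of $\varphi$ and of $\Sigma_{n+1}$ in $(d,A)$ granted by Lemma~\ref{lem:monotonicity}.

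Next I would verify the hypotheses of Theorem~\ref{theo:adding}. The condition~\eqref{eq:increasing_contracting_condition} on $g_\LL,g_\R$ holds at least locally near $y=0$: the stroboscopic map on each branch is a composition of forward flows of smooth monotone ODEs, so the derivative is positive, and $f'<0$ on $[0,\theta]$ makes the variational equation strictly contracting, so $0<g_\LL',g_\R'<1$ near $y=0$. By Remark~\ref{rem:local_contractiveness} this local control is enough provided $|\mu_\LL|,|\mu_\R|$ are small, and these are indeed small since the window $[A_n^\LL(d),A_{n+1}^\R(d)]$ is narrow and closes at the codimension-two origin of the $(d,1/A)$ plane. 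For C.1–C.3, the endpoint conditions $\mu_\LL(0)=0$ and $\mu_\R(1)=0$ follow directly from the definitions of $A_n^\LL$ and $A_{n+1}^\R$ above; positivity on $(0,1)$ comes from the negative gap $\s_->\s_+$ of Lemma~\ref{lem:lateral_values} together with $\Sigma_{n+1}\in(\s_+,\s_-)$ throughout the interior. The signs of $\mu_\LL'(\lambda),\mu_\R'(\lambda)$ follow from Lemma~\ref{lem:monotonicity}: since $B'(\lambda)<0$, $A$ increases with $\lambda$ and hence $\Sigma_{n+1}(\lambda)$ decreases, giving $\mu_\LL'>0$ and $\mu_\R'<0$ modulo contributions from $d'(\lambda)$ in $\s_\pm(d(\lambda))$.

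The main obstacle is precisely that last point: Lemma~\ref{lem:monotonicity} gives monotonicity of $\Sigma_{n+1}$ only in $A$, while on the curve $d(\lambda)$ may also vary and shifts $\s_\pm$. I would handle this by noting that the two ``white'' boundary curves $\Sigma_{n+1}(d,A)=\s_\pm(d)$ are graphs $A=A_n^\LL(d)$ and $A=A_{n+1}^\R(d)$ of smooth functions of $d$ (implicit function theorem applied using the same non-vanishing derivative $\partial_A\Sigma_{n+1}\neq 0$), and that the function $\lambda\mapsto\mu_\LL(\lambda)$ is precisely the signed $1/A$-distance from $(d(\lambda),B(\lambda))$ to the graph $B=1/A_n^\LL(d)$; since the curve $\mu(\lambda)$ crosses this graph transversally at $\lambda=0$ by virtue of $B'<0$ and $\mu_\LL(\lambda)>0$ on $(0,1]$, and similarly for $\mu_\R$, one obtains strict monotonicity of both functions by a connectedness/transversality argument, possibly after a smooth reparametrization of $\lambda$ (which does not affect the conclusion of Theorem~\ref{theo:adding}). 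Once C.1–C.3 are in place, Theorem~\ref{theo:adding} applies and delivers the period adding structure for $\s_\lambda$ on $[0,1]$.
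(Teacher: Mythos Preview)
Your approach is essentially the same as the paper's: shift the discontinuity to the origin and verify the hypotheses of Theorem~\ref{theo:adding}. You are in fact more careful than the paper on two points. First, you correctly identify the relevant boundary as $\Sigma_{n+1}$; the paper's own proof writes $\Sigma_n$ and $\bar x_{n-1},\bar x_n$ throughout, which is off by one given the endpoints $A_n^\LL$ and $A_{n+1}^\R$ (compare with the proof of Proposition~\ref{pro:1T_po}). Second, you explicitly flag that condition~C.2 is not immediate because $\s_\pm$ depend on $d(\lambda)$; the paper simply asserts that $\Sigma_n(\lambda)$ decreasing yields C.2, ignoring this $d$-dependence entirely.

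Two small corrections to your write-up. Your appeal to Remark~\ref{rem:local_contractiveness} and the claim that the window $[A_n^\LL(d),A_{n+1}^\R(d)]$ is ``narrow'' are unnecessary and the latter is not generally true away from $d=0$: since $f'<0$ on all of $[0,\theta]$, the variational equation gives global contractiveness of each branch of $\s$ on its full domain, which is exactly what the paper uses. And your proposed fix for C.2 via ``reparametrization of $\lambda$'' cannot work as stated, since a reparametrization cannot turn a non-monotone function into a monotone one; if one really wants strict monotonicity of $\mu_\LL,\mu_\R$ along the curve, one needs an additional hypothesis on $d(\lambda)$ or a direct argument that the bifurcation curves in the white region are graphs over $B$ that the curve $\mu$ crosses monotonically. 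The paper does not resolve this either.
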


\begin{proof}
We show that the map $\s$ can be written in the form of
map~\eqref{eq:normal_form} fulfilling the conditions of
theorem~\ref{theo:adding}.

After performing the reparametrization $\lambda \longmapsto(d,A)$ given
by the\linebreak curve~\eqref{eq:param_curve}, we have that
$\Sigma_n(\lambda)\in[0,\theta)$ $\forall \lambda \in[0,1]$. Hence, the change
of variables
\begin{equation*}
\tx\longmapsto x-\Sigma_n(\lambda),
\end{equation*}
is well defined and makes the map
$\ts_\lambda(\tx):=\s(\tx+\Sigma_n(\lambda))-\Sigma_n(\lambda)$ undergo a
discontinuity at $\tx=0$, independently of $\lambda$. The stroboscopic map
$\ts_\lambda$ is continuous, increasing and contracting in
$[0,\theta-\Sigma_n(\lambda))$ and $[-\Sigma_n(\lambda),0)$ for
$\lambda\in[0,1]$. Hence, the map $\ts_\lambda(\tx)$ is of
type~\eqref{eq:normal_form} and
satisfies~\eqref{eq:increasing_contracting_condition} in its domain, which is
enough as argued in remark~\ref{rem:local_contractiveness}.

As argued in the proof of proposition~\ref{pro:1T_po}, the map
$\ts_\lambda(\tx)$ undergoes a negative gap at $\tx=0$ for $\lambda\in[0,1]$:
\begin{equation}
\ts_\lambda(0^-)>\ts_\lambda(0^+).
\label{eq:negative_gap}
\end{equation}
For $\lambda=0$ and $\lambda=1$, the fixed points $\bx_{n-1}-\Sigma_n(0)$ and
$\bx_n-\Sigma_n(1)$ of the map $\ts_\lambda(\tx)$ undergo border collision
bifurcations. This implies
\begin{align*}
&\ts_0(0^-)=0,\\
&\ts_1(0^+)=0.
\end{align*}
Hence, condition C.3 of theorem~\ref{theo:adding} is satisfied. Recalling the negative
gap~\eqref{eq:negative_gap}, the map $\ts_\lambda(\tx)$ fulfills
\begin{align*}
&\ts_\lambda(0^+)<0,\,\forall \lambda\in(0,1),\\
&\ts_\lambda(0^-)>0,\,\forall \lambda\in(0,1),
\end{align*}
and hence C.1 in theorem~\ref{theo:adding} is satisfied too.\\
From the fact that $B'(\lambda)<0$, the reparametrization is such that the
parameter $A$ monotonically decreases with $\lambda$. From
lemma~\ref{lem:monotonicity} we get that $\Sigma_n(\lambda)$ also decreases
monotonically with $\lambda$ and hence C.2 in theorem~\ref{theo:adding} is also
satisfied.

Finally, we can apply theorem~\ref{theo:adding} for the normal form
map~\eqref{eq:normal_form} to obtain that the bifurcation scenario exhibited by
the map $\ts_\lambda$ (an thus by the original map $\s_\lambda$) when varying
$\lambda$ is given by the period adding structure.
\end{proof}

By combining the previous result and the bifurcation scenario described for the
normal form map~\eqref{eq:normal_form} we get that the regions in parameter
space between curves of the form $(d,1/A_n^\LL(d))$ and $(d,1/A_{n+1}^\R(d))$
are covered by an infinite number of periodic orbits with arbitrarily high
period following the period adding structure described in~\S\ref{sec:results}.
In addition, by combining it with lemma~\ref{pro:1T_po} we get the full
description of the bifurcation scenario for system~\SYSTEMWR{} in the parameter
space $(d,1/A)$, as described in~\S\ref{sec:results}.\\

\subsection{Symbolic sequences, firing number and firing rate}\label{sec:symbolic}
We now introduce the symbolic dynamics described in~\S\ref{sec:results}. Let
$(x_1,\dots,x_n)$ be a $n$-periodic orbit located in the region in parameter
space between the curves $(d,1/A_n^\LL)$ and $(d,1/A_{n+1}^\R)$. We then assign
to this orbit the symbolic sequence given by the encoding
\begin{equation*}
\begin{aligned}
&x_i\to \LL&&\text{if }x_i<\Sigma_n\\
&x_i\to\R&&\text{if }x_i\ge\Sigma_n.
\end{aligned}
\end{equation*}
Note that $x_i<0$ and $x_i\ge0$ correspond to $x_i\in S_n$ and $x_i\in
S_{n+1}$, respectively. Hence, recalling the definition of the sets $S_i$
provided in eq.~\eqref{eq:sets}, when introducing the
encoding~\eqref{eq:encoding}, the symbols $\LL$ and $\R$ have to be interpreted
as the system spiking $n$ and $n+1$ times for one iteration of the stroboscopic
map, respectively. This determines the so-called \emph{firing number},
introduced in~\cite{KeeHopRin81} as follows. 
\begin{definition}\label{def:firing_number}
Let $s$ be the total number of spikes performed by a $p$-periodic orbit of the
stroboscopic map $\s(x)$ when iterated $p$ times, $n,p\in\mathbb{N}$; then we
define the firing-number as
\begin{equation}
\eta=\frac{s}{p}.
\label{eq:average_spkes_pT}
\end{equation}
\end{definition}
Provided that these periodic orbits are attractive, this number becomes the
asymptotic average number of spikes per iteration of the stroboscopic map.\\
Note that this quantity can be computed from the symbolic sequence. Let
$\sigma$ be a $p$-periodic orbit for parameter values located between the curves
$1/A=1/A_n^\LL(d)$ and $1/A=1/A_{n+1}^\R$ whose symbolic sequence contains
$m$ $\R's$ and $k$ $\LL's$ ($k=p-m$). Then, the firing number becomes
\begin{equation*}
\eta=\frac{nk+(n+1)m}{p}=n+\frac{m}{p}.
\end{equation*}
As explained in~\S\ref{sec:results}, the rotation numbers associated with the
periodic orbits in the period adding structure can be computed dividing the
number of $\R's$ symbols in its symbolic sequence by its total period,
\begin{equation*}
\rho=\frac{m}{p}.
\end{equation*}
Hence, the firing number can be related with the rotation number as
\begin{equation}
\eta=n+\rho,
\label{eq:firing-rotation_number}
\end{equation}
where $n$ is such that the periodic orbit steps on each side of the boundary
$\Sigma_n$.\\
As explained also in~\S\ref{sec:results}, the rotation number is organized by
the Farey tree structure shown in fig.~\ref{fig:farey_tree} when parameters are
varied from the bifurcation curve $1/A=1/A_n^\LL(d)$ towards $1/A=1/A_{n+1}^\R$
along curves as the one given in proposition~\ref{pro:adding}.  Hence, it
follows a devil's staircase from $0$ to $1$. As a consequence, the firing number
follows a devil's staircase from $n$ to $n+1$ when parameters $d$ and $A$ are
varied as mentioned.

Taking into account that the stroboscopic map consists of flowing
system~\SYSTEMWR{} for a time $T$, the firing number allows one to compute the
\emph{firing rate} associated with the corresponding periodic orbit as
\begin{equation*}
r=\frac{\eta}{T}.
\end{equation*}
This becomes the asymptotic number of spikes per unit time.\\
Note that, provided that these periodic orbits are attracting, $r$ becomes the
asymptotic firing rate defined in eq.~\eqref{eq:defirate},
\begin{equation*}
r=\lim_{\tau\to\infty}\frac{\#(\mbox{spikes performed by } \phi(t;x_0)
\mbox{ for } t\in[0,\tau])}{\tau},
\end{equation*}
which does not depend on $x_0\in[0,\theta)$.\\
Using relation given in eq.~\eqref{eq:firing-rotation_number}, the firing rate
follows also a devil's staircase from $n/T$ and $(n+1)/T$ when the parameter
values are varied through curves as the one in prop.~\ref{pro:adding} between
the bifurcation curves $1/A=1/A_n^\LL(d)$ and $1/A=1/A_{n+1}^\R(d)$,
respectively.

\section{Examples}\label{sec:examples}
In this section we illustrate the results presented so far in two different
examples. We consider a system of the type
\begin{equation*}
\dot{x}=f(x)+I(t),
\end{equation*}
with $I(t)$ a $T$-periodic input as defined in eq~\eqref{eq:forcing} and three
different types of functions for $f(x)$ satisfying conditions~\conds{}.
The first example consists of a linear system, leading to the so-called
\emph{linear integrate-and-fire} system (LIF):
\begin{equation}
f_1(x)=ax+b.
\label{eq:linear}
\end{equation}
Taking the threshold $\theta=1$, conditions~\conds{} are satisfied if $a<0$ and
$-b/a\in(0,1)$. In figure~\ref{fig:d-invA_T1d9} we show the existence of
periodic orbits in the parameter space $d\times 1/A$, which is as predicted.  In
fig.~\ref{fig:d-invA_1dscann_periods} we show the periods of the periodic orbits
found along the line drawn in figure~\ref{fig:d-invA_T1d9}, although the
bifurcation scenario is topologically the same for any other curve transversally
crossing the colored regions. As one can see in
fig.~\ref{fig:d-invA_1dscann_periods}, along such a curve there exist regions
where one finds only $T$-periodic orbits (fixed points of the stroboscopic map,
period $1$). These are the black regions in figure~\ref{fig:d-invA_T1d9}, for
example containing the points as $B$, $C$, $D$ and $E$. The periodic orbits for
the parameter values for $B$ and $C$ are shown in figs.~\ref{fig:poB}
and~\ref{fig:poC}, respectively.  Note that their associated firing number
defined in eq.~\eqref{eq:firing-rotation_number} (average number of spikes per
iteration of the stroboscopic map) is $0$ and $1$, respectively.\\
As given by propositions~\ref{pro:existence_of_fp} and~\ref{pro:1T_po}, there
exist an infinite number of regions, accumulating at the horizontal axis
$1/A=0$, for which one finds $T$-periodic orbits with arbitrarily large integer
firing number. This can be seen in figure~\ref{fig:d-invA_1dscann_FN} for the
points $D$ and $E$ and successive regions containing $T$-periodic orbits
(period one figure~\ref{fig:d-invA_1dscann_periods}).
\begin{figure}
\begin{center}
\includegraphics[width=0.5\textwidth,angle=-90]
{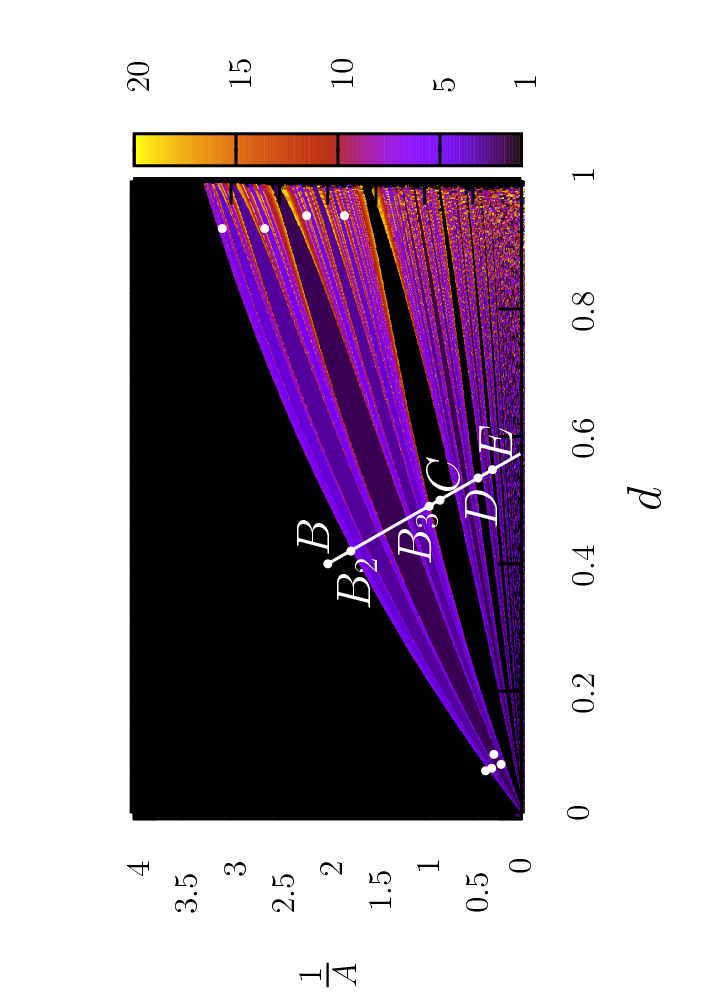}
\end{center}
\caption{Bifurcation diagram for the map $\s$ for $T=1.9$ associated with
system~\SYSTEMWRLinear{} for the linear system~\eqref{eq:linear}. The colors
refer to the periods of the periodic orbits found by simulating the system.
Periodic orbits are shown only up to period $20$ for clarity reasons. The
$T$-periodic orbits (fixed points of $\s$) for parameter values corresponding to
points $B$ and $C$, as well as their bifurcations ($B_2$ and $B_3$), are shown
in fig.~\ref{fig:p-orbits}. The non-labeled eight points correspond to
$5T$-periodic orbits with four different symbolic sequences. The ones for the
four points with small values of $d$ are shown in
fig.~\ref{fig:period5_orbits_smalld}, and the ones with large values of $d$ in
fig.~\ref{fig:period5_orbits_larged}.}
\label{fig:d-invA_T1d9}
\end{figure}
\begin{figure}
\begin{center}
\begin{picture}(1,0.5)
\put(0,0.35){
\subfigure[\label{fig:d-invA_1dscann_periods}]
{\includegraphics[angle=-90,width=0.5\textwidth]
{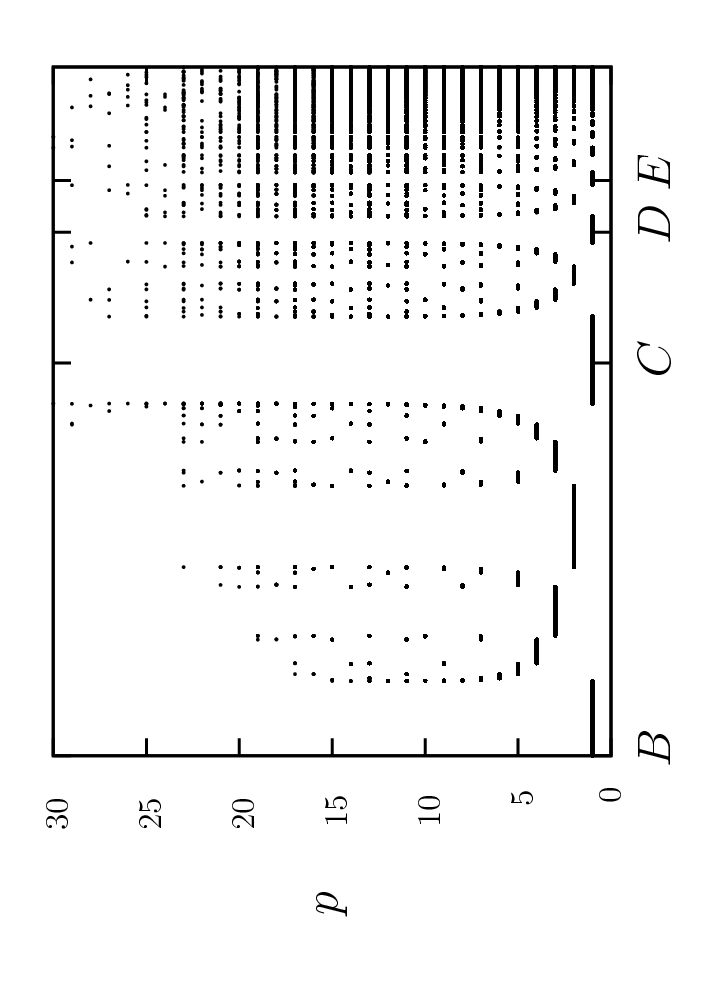}}
}
\put(0.5,0.35){
\subfigure[\label{fig:d-invA_1dscann_FN}]
{\includegraphics[angle=-90,width=0.5\textwidth]
{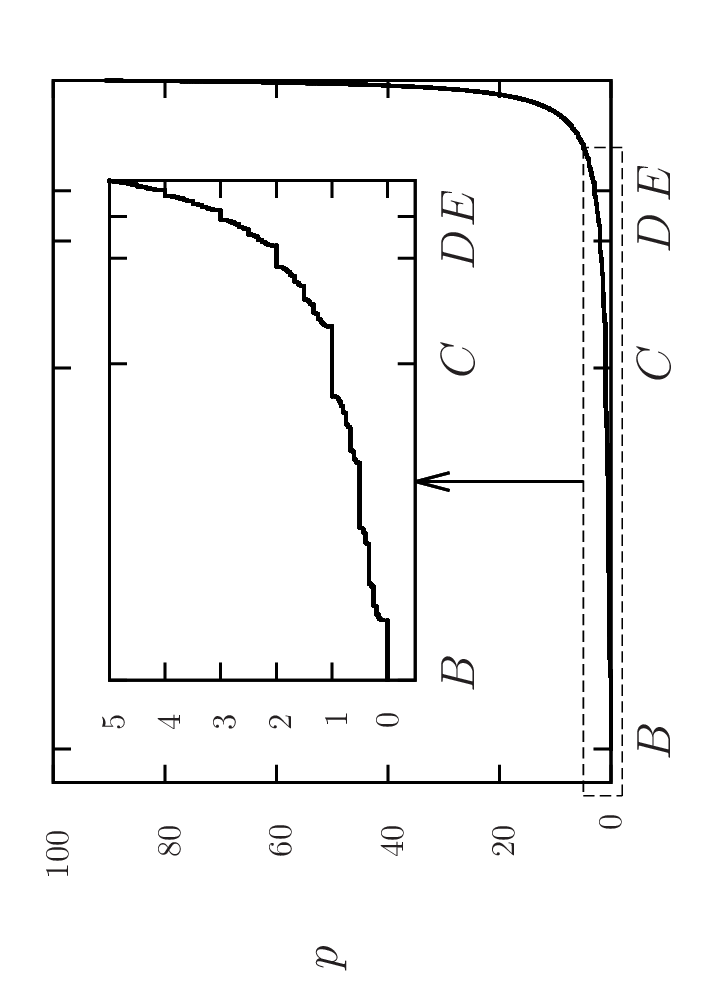}}
}
\end{picture}
\end{center}
\caption{(a) periods of the periodic orbits found along a line as the one shown
in fig.~\ref{fig:d-invA_T1d9}. (b) firing number associated with such periodic
orbits.}
\label{fig:1dscanns_linear}
\end{figure}

The two $T$-periodic orbits found in $B$ and $C$ undergo border collision
bifurcations at the points $B_2$ and $B_3$ labeled in
fig.~\ref{fig:d-invA_T1d9}. The periodic orbits at the moment of the bifurcation
are shown in figures~\ref{fig:poB2} and~\ref{fig:poB4}, respectively. This
occurs similarly for all other $T$-periodic orbits with higher firing number
(see proposition~\ref{pro:1T_po} for more details). In between points $B$ and
$C$, $C$ and $D$, $D$ and $E$ etc., a period adding bifurcation occurs; that is,
there exist an infinite number of periodic orbits whose periods, symbolic
sequences and rotation numbers are organized by the Farey tree structure shown
in fig.~\ref{fig:farey_tree}. After applying the symbolic
encoding~\eqref{eq:encoding}, the symbolic sequences and periods are obtained by
successive concatenation and addition as shown in fig.~\ref{fig:farey_tree} and
explained in section~\ref{sec:results}. In
figures~\ref{fig:period5_orbits_smalld} and~\ref{fig:period5_orbits_larged} we
show, for different parameter values, the four periodic orbits with period $5$
located in the adding structure between points $B$ and $C$, which correspond to
the symbolic sequences $\LL^4\R$, $\LL^2\R\LL\R$, $\LL\R\LL\R^2$ and $\LL\R^4$
(see the Farey tree in fig.~\ref{fig:farey_tree}). The parameter values used are
given by the points marked in fig.~\ref{fig:d-invA_T1d9}, for large
and small $d$. Note how the $\LL$ symbol corresponds to an iteration of the
stroboscopic map without exhibiting any spike, whereas for $\R$  one spike
occurs.

\begin{figure}
\begin{center}
\begin{picture}(1,0.8)
\put(0,0.8){
\subfigure[\label{fig:poB}]{\includegraphics[angle=-90,width=0.5\textwidth]
{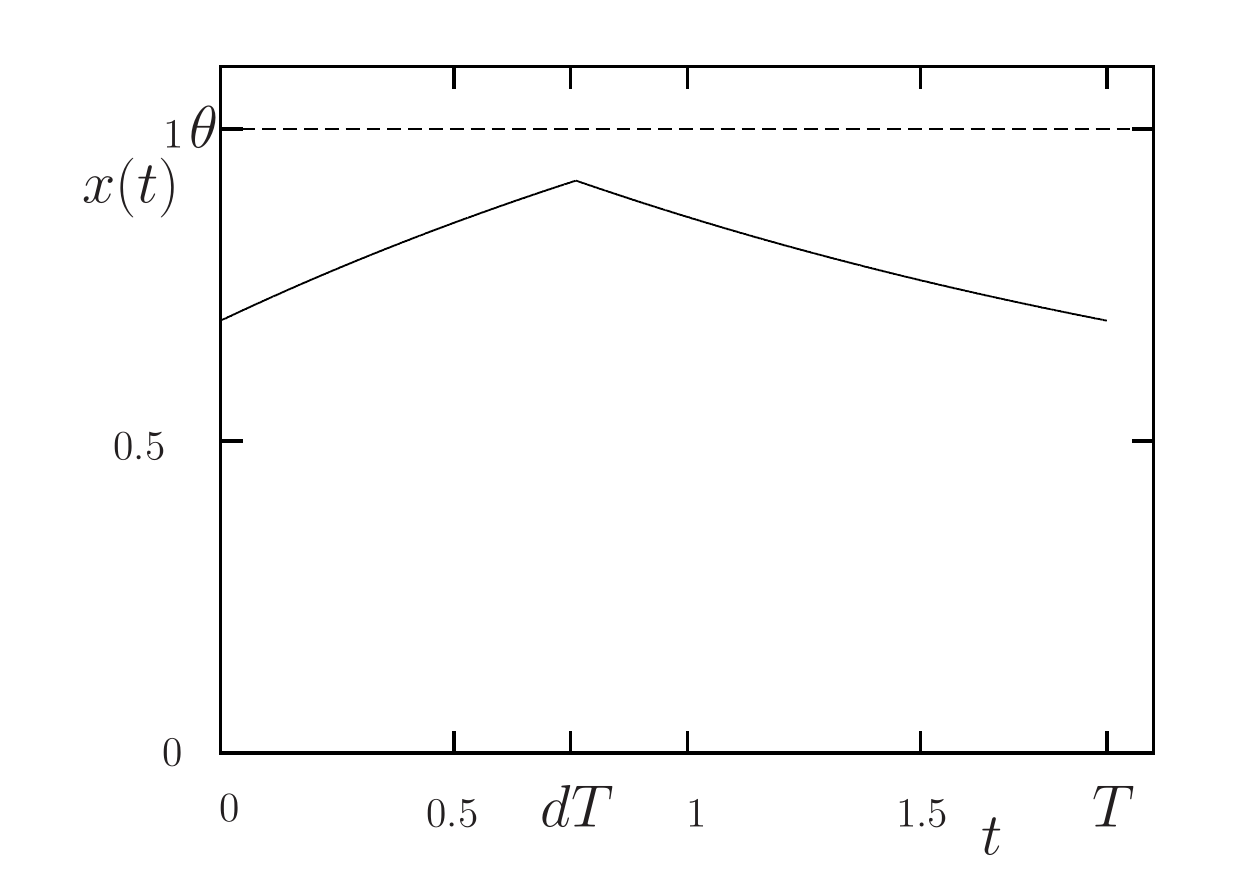}}
}
\put(0.5,0.8){
\subfigure[\label{fig:poB2}]{\includegraphics[angle=-90,width=0.5\textwidth]
{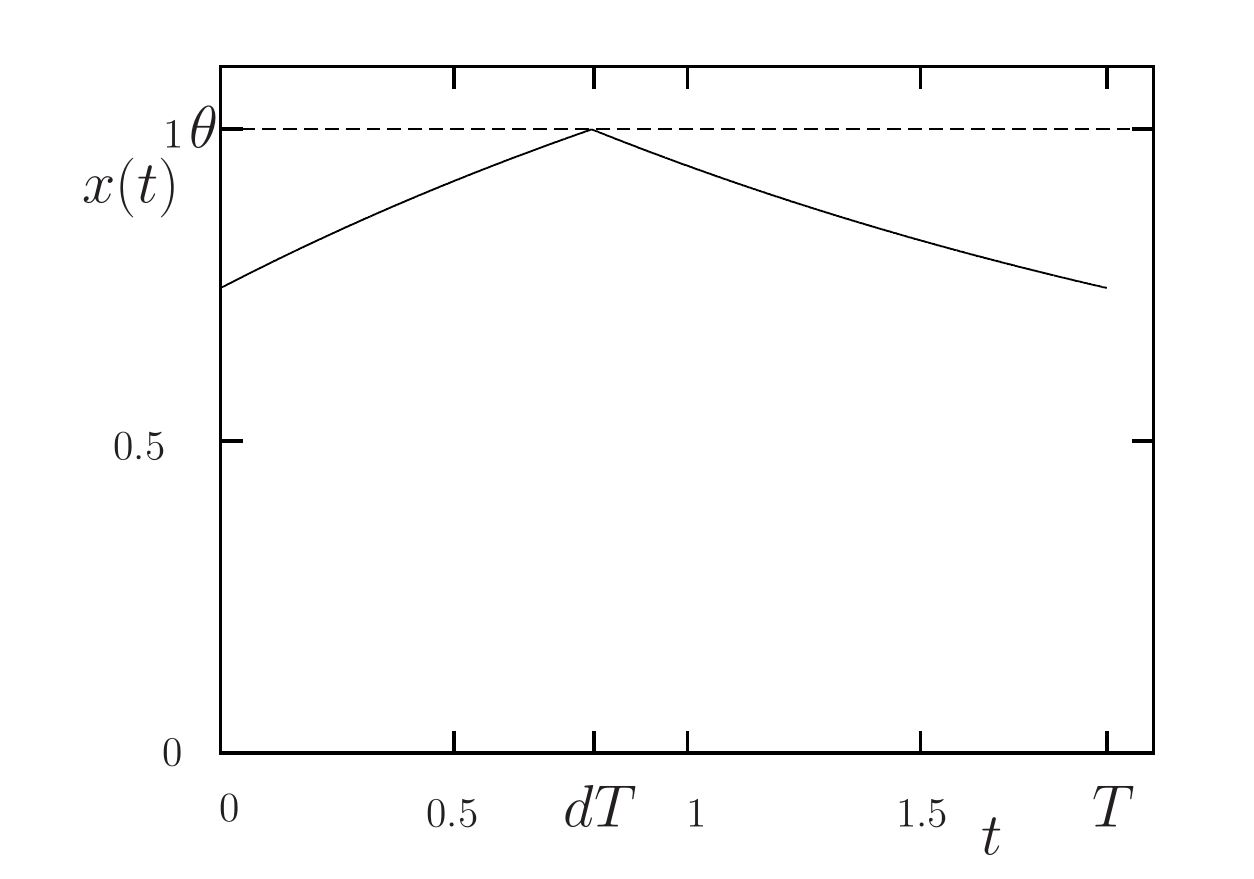}}
}
\put(0,0.4){
\subfigure[\label{fig:poB4}]
{\includegraphics[angle=-90,width=0.5\textwidth]
{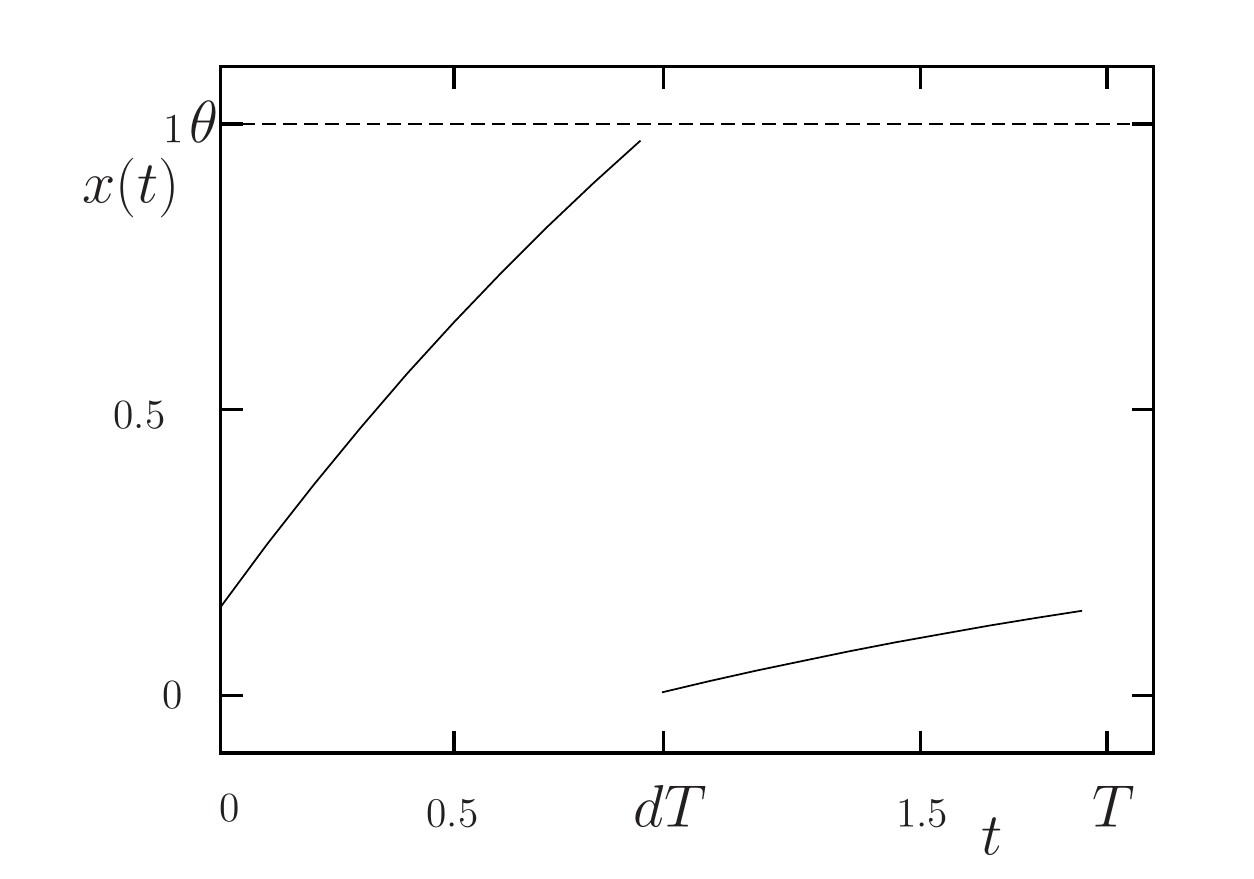}}
}
\put(0.5,0.4){
\subfigure[\label{fig:poC}]{
\includegraphics[angle=-90,width=0.5\textwidth]
{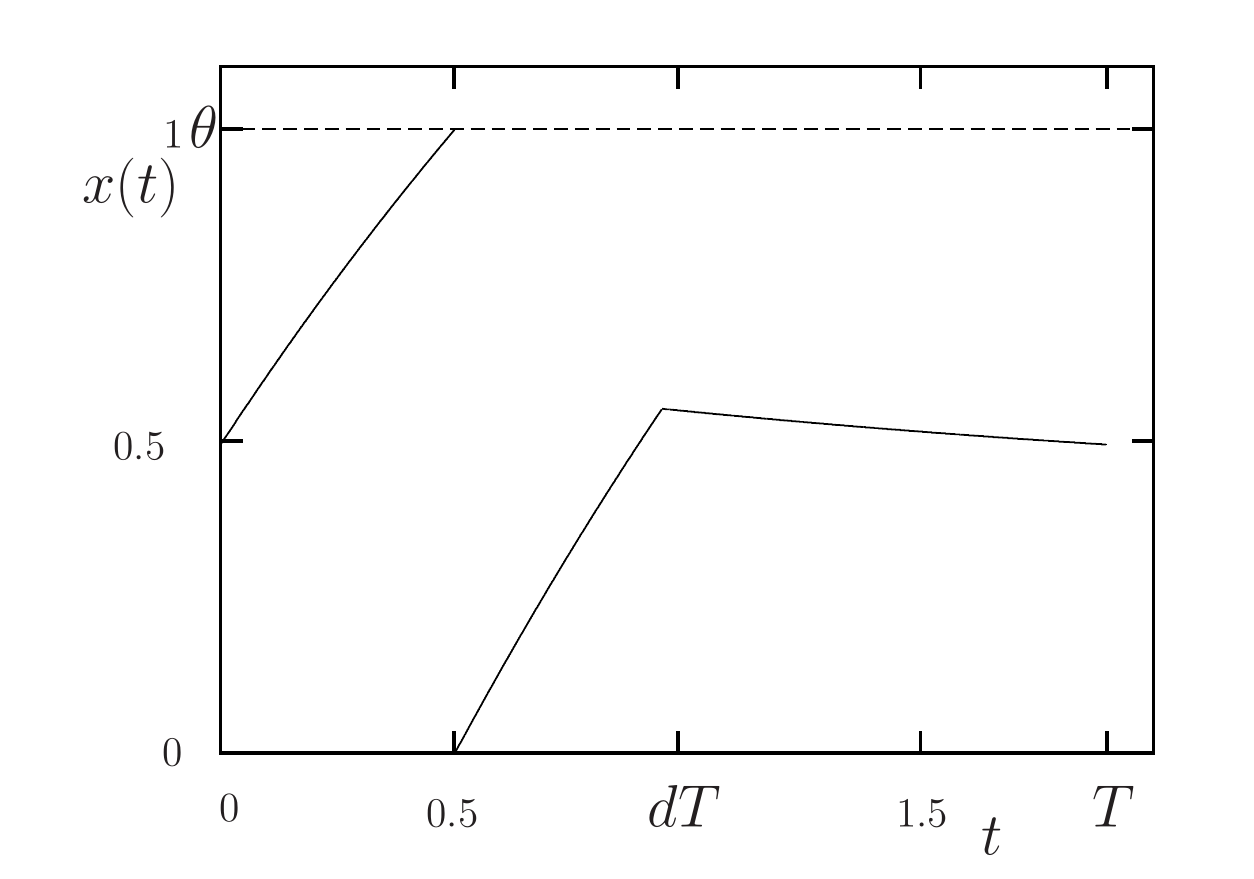}}
}
\end{picture}
\end{center}
\caption{Periodic orbits for the labeled points in fig.~\ref{fig:d-invA_T1d9}.
(a) periodic orbit with no spikes (fixed point $\bx_0$), (b) left bifurcation of
the fixed point, $\bx_0=\Sigma_1$, (c) right bifurcation of the fixed point
$\bx_1=\Sigma_1$, (d) periodic orbit spiking once.}
\label{fig:p-orbits}
\end{figure}

\begin{figure}
\begin{picture}(1,0.8)
\put(0,0.8){
\subfigure[\label{fig:L4R_smalld}]{\includegraphics[angle=-90,width=0.5\textwidth]
{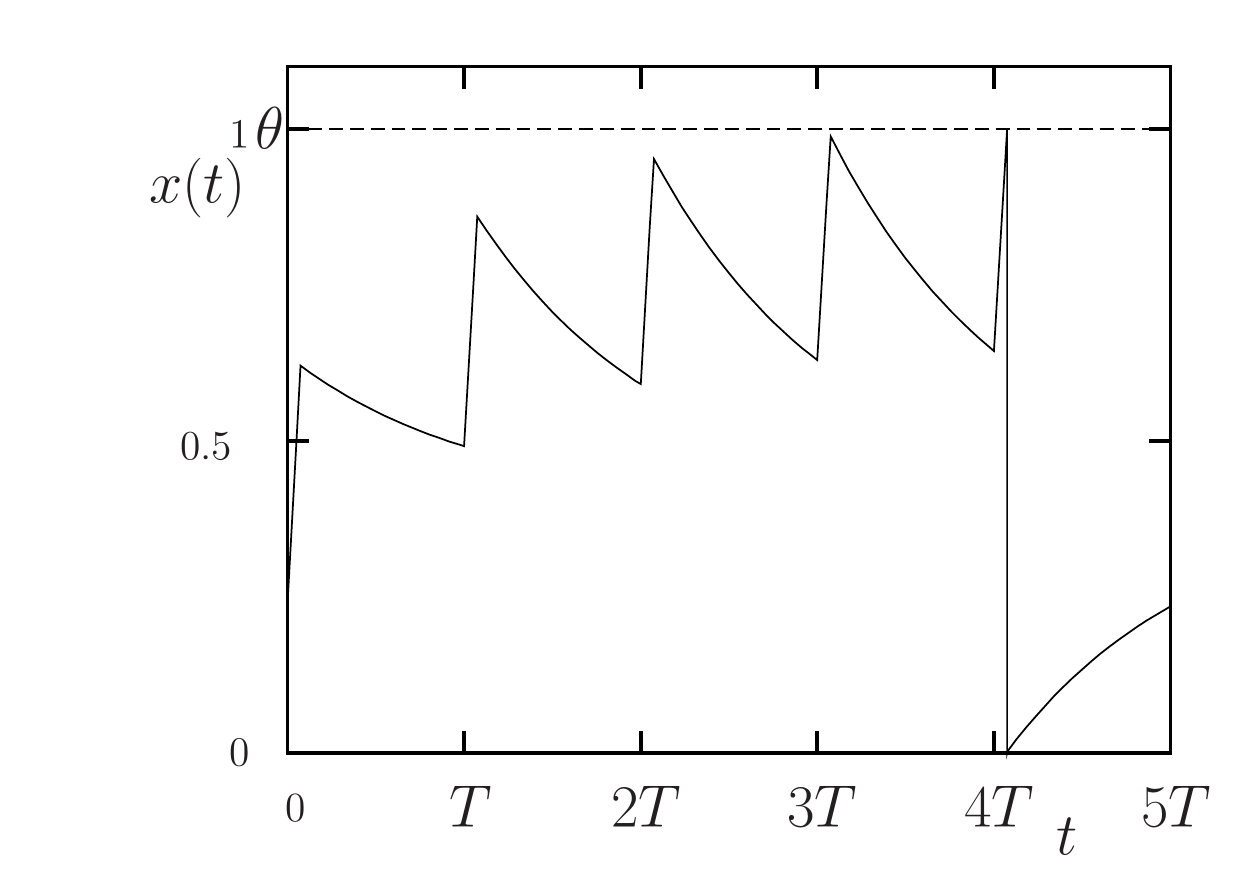}}
}
\put(0.5,0.8){
\subfigure[\label{fig:L2RLR_smalld}]{\includegraphics[angle=-90,width=0.5\textwidth]
{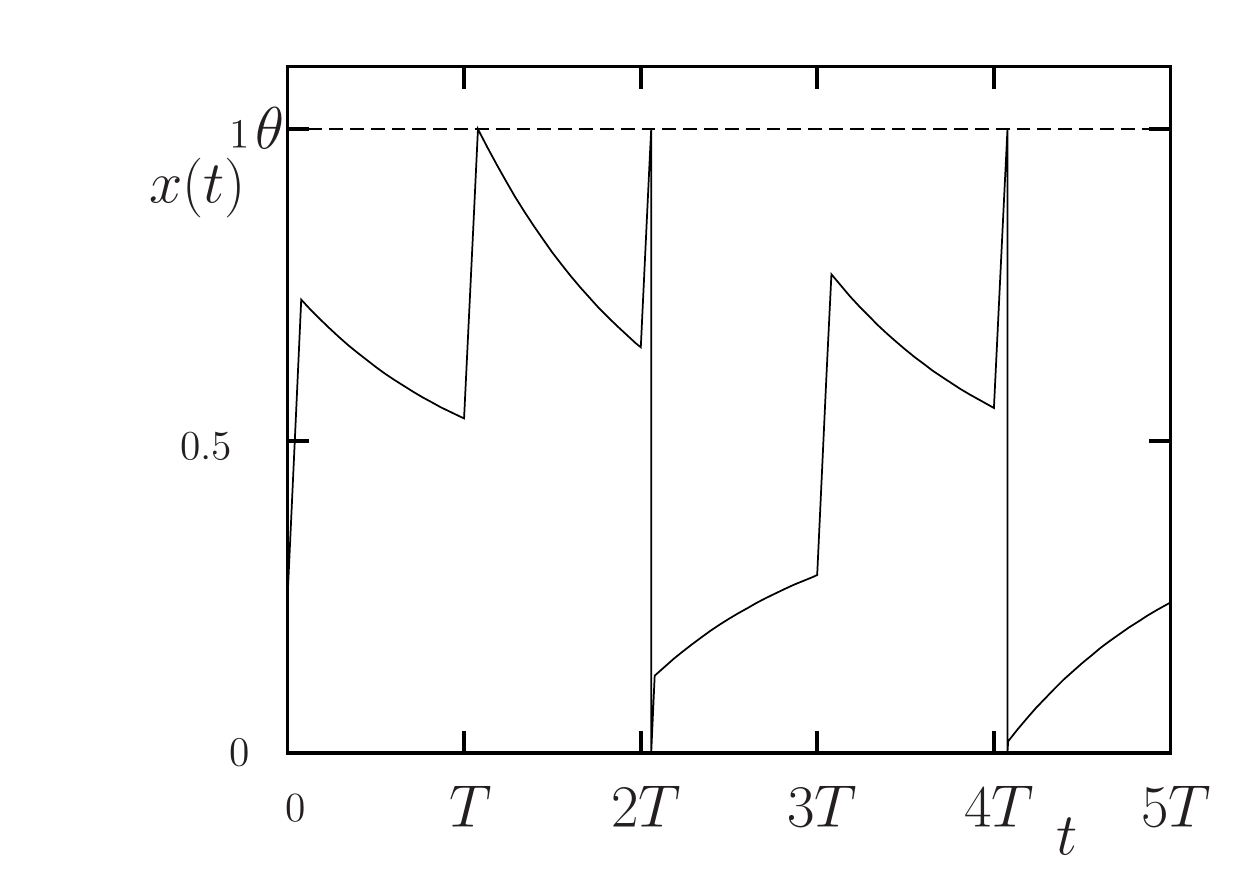}}
}
\put(0,0.4){
\subfigure[\label{fig:LRLR2_smalld}]{\includegraphics[angle=-90,width=0.5\textwidth]
{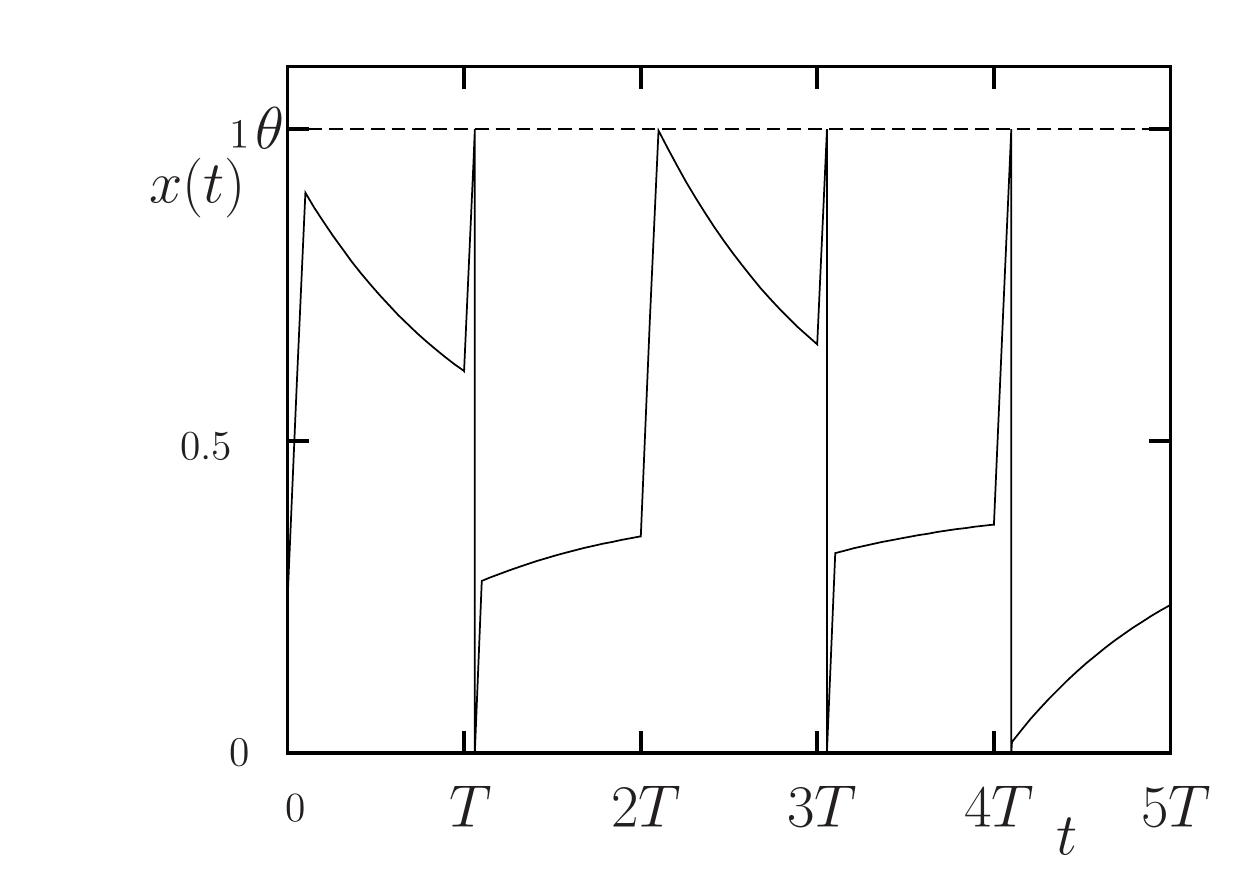}}
}
\put(0.5,0.4){
\subfigure[\label{fig:LR4_smalld}]{\includegraphics[angle=-90,width=0.5\textwidth]
{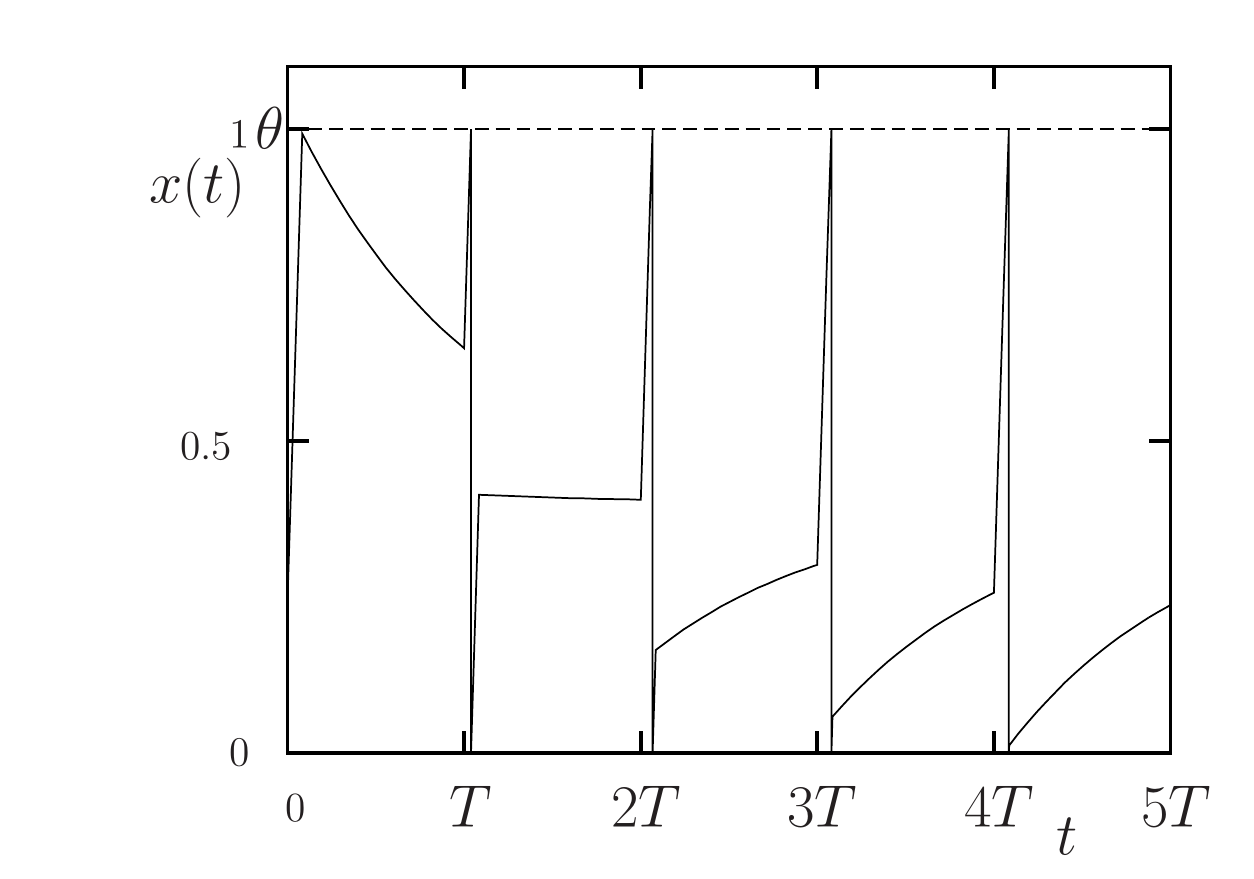}}
}
\end{picture}
\caption{Period-$5$ periodic orbits with different symbolic sequences and firing
number: $\LL^4\R$ $\eta=1/5$(a), $\LL^2\R\LL\R$ $\eta=2/5$ (b), $\LL\R\LL\R^2$
$\eta=3/5$ (c) and $\LL\R^4$ $\eta=4/5$ (d).  Parameter values for which these
periodic orbits exist are marked with points in figure~\ref{fig:d-invA_T1d9},
and correspond to small values of the duty cycle $d$.}
\label{fig:period5_orbits_smalld}
\end{figure}

\begin{figure}
\begin{picture}(1,0.8)
\put(0,0.8){
\subfigure[\label{fig:L4R_larged}]{\includegraphics[angle=-90,width=0.5\textwidth]
{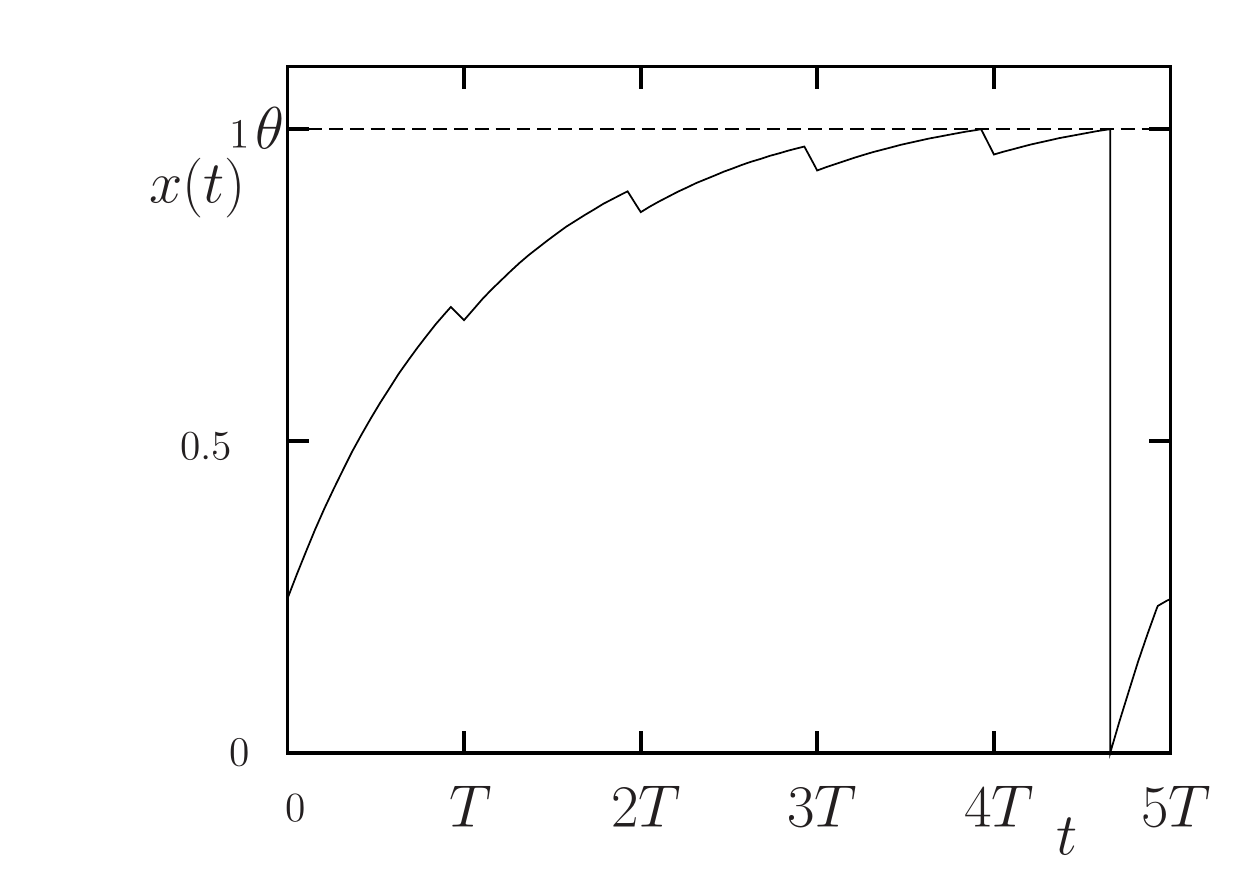}}
}
\put(0.5,0.8){
\subfigure[\label{fig:L2RLR_larged}]{\includegraphics[angle=-90,width=0.5\textwidth]
{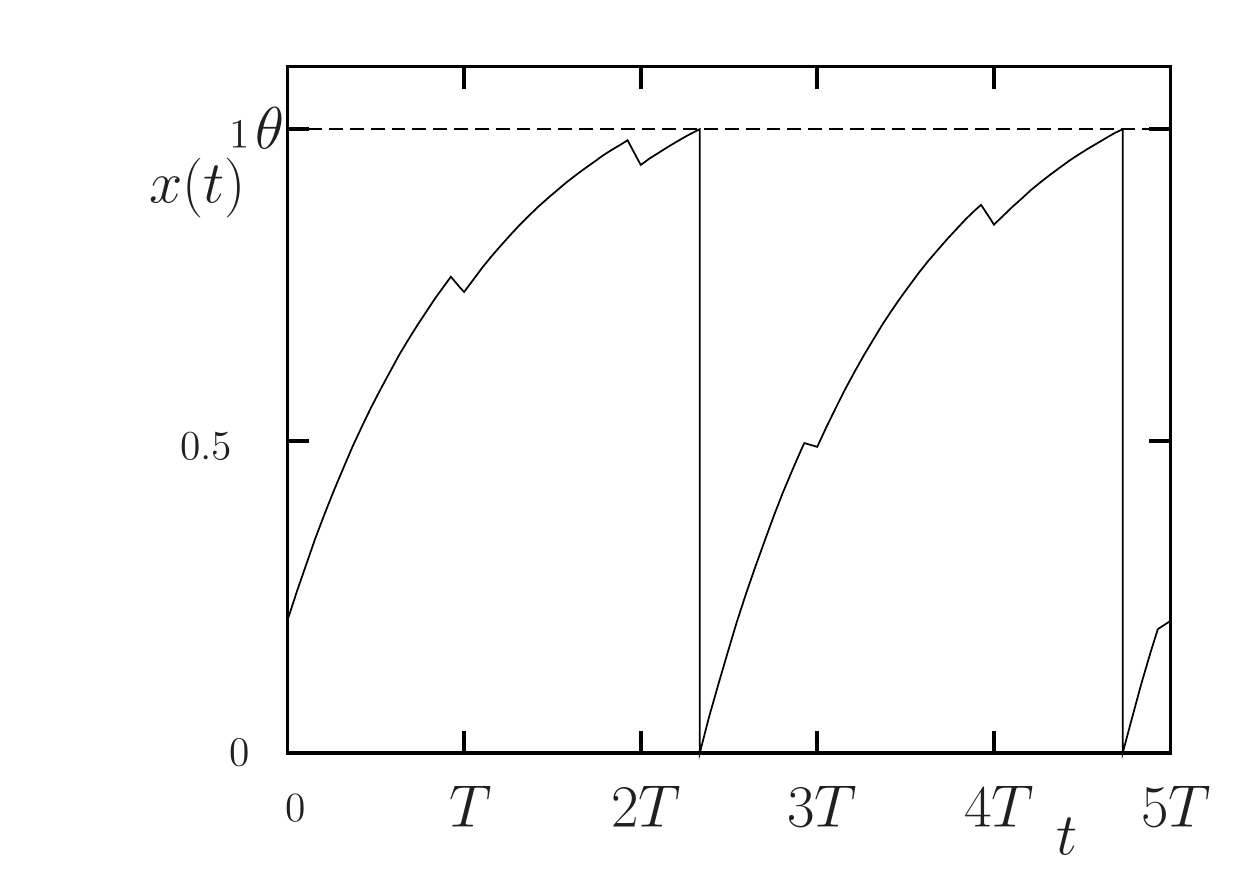}}
}
\put(0,0.4){
\subfigure[\label{fig:LRLR2_larged}]{\includegraphics[angle=-90,width=0.5\textwidth]
{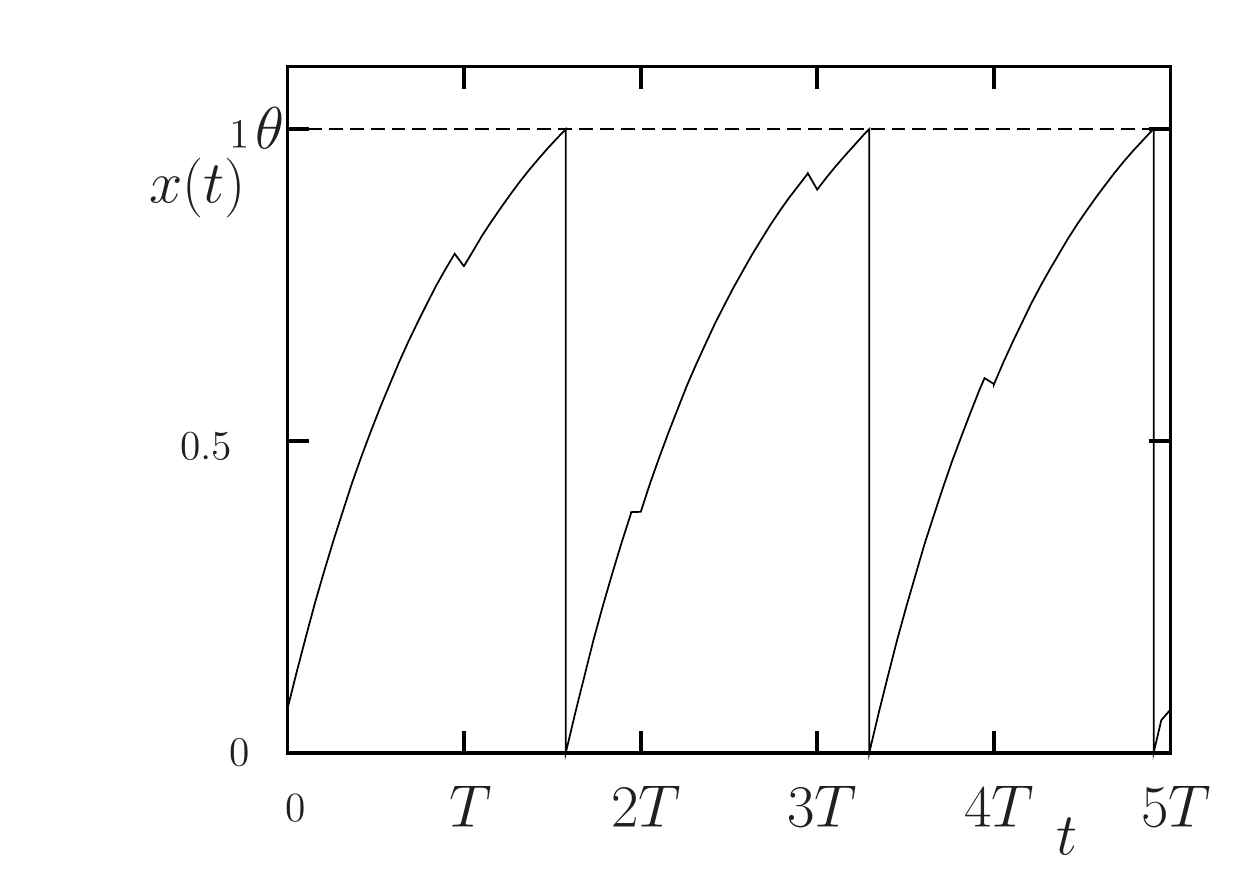}}
}
\put(0.5,0.4){
\subfigure[\label{fig:LR4_larged}]{\includegraphics[angle=-90,width=0.5\textwidth]
{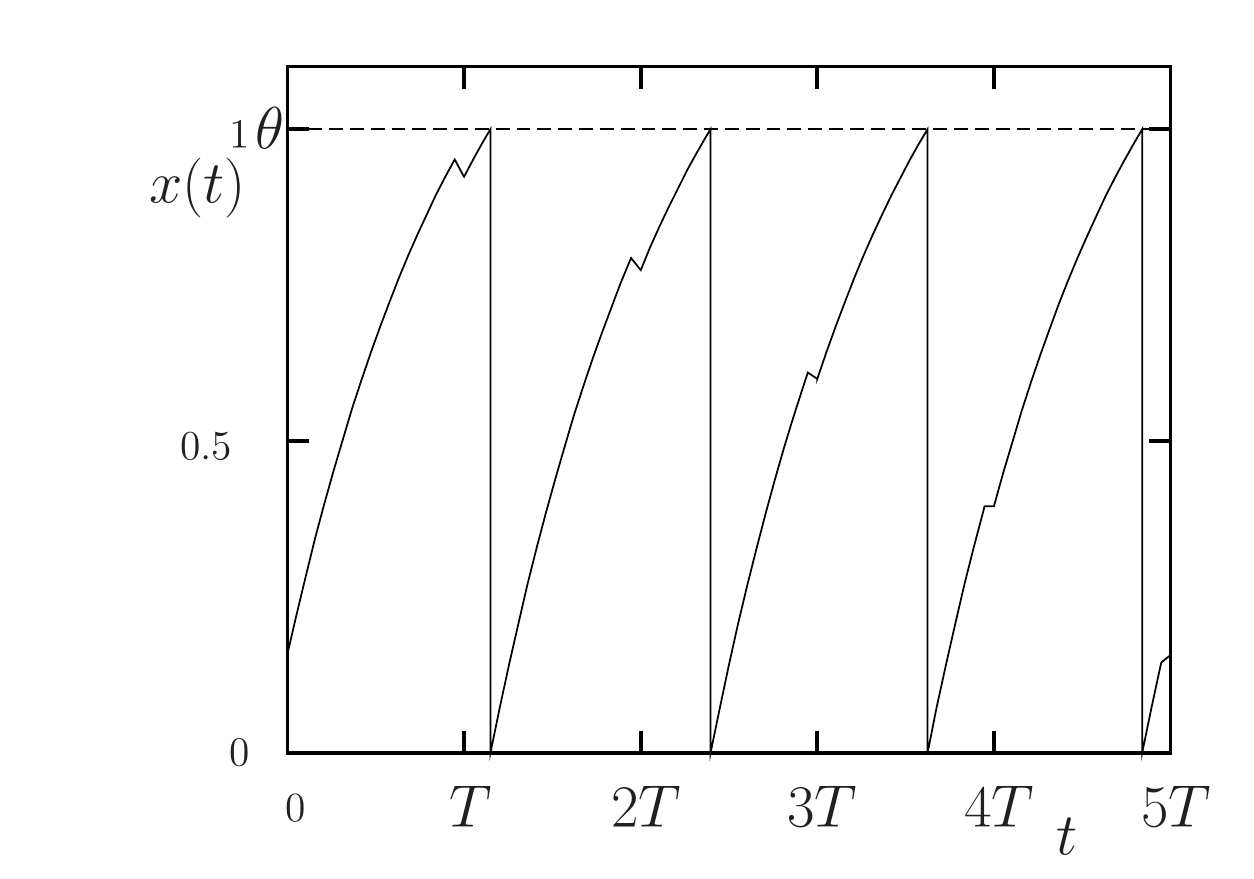}}
}
\end{picture}
\caption{Period $5$ periodic orbits with different symbolic sequences and firing
number: $\LL^4\R$ $\eta=1/5$ (a), $\LL^2\R\LL\R$ $\eta=2/5$ (b), $\LL\R\LL\R^2$
$\eta=3/5$ (c) and $\LL\R^4$ $\eta=4/5$ (d).  Parameter values for which these
periodic orbits exist are marked with points in figure~\ref{fig:d-invA_T1d9},
and correspond to large values of the duty cycle $d$.}
\label{fig:period5_orbits_larged}
\end{figure}

The firing number associated with the periodic orbits is related with their
rotation number through equation~\eqref{eq:firing-rotation_number}, and it is
shown in figure~\ref{fig:d-invA_1dscann_FN} for the periodic orbits found the
line labeled in fig.~\ref{fig:d-invA_T1d9}. As one can see there and as it comes
from proposition~\ref{pro:adding} (see \S~\ref{sec:symbolic}), the firing number
is a strictly increasing and unbounded devil's staircase as a function of the
parameters.\\

Finally, we show that the same results apply for system~\SYSTEMWR{} with $f(x)$
not necessary linear as long as conditions~\conds{} are satisfied. To illustrate
this we choose two different functions for $f$:
\begin{equation}
\begin{aligned}
f_2(x)&=a_2\left( x-b_2 \right)^5-c_2x\\
f_3(x)&=-\arctan(a_3(x-b_3)).
\end{aligned}
\label{eq:functions}
\end{equation}
With $\theta=1$, we choose parameter values in order to make the functions
$f_2$ and $f_3$ satisfy condition~\conds{}:
\begin{center}
\begin{minipage}[c]{0.2\textwidth}
\begin{align*}
a_2&=-10\\ b_2&=0.7\\c_2&=0.01
\end{align*}
\end{minipage}
\begin{minipage}[c]{0.2\textwidth}
\begin{align*}
a_3&=100\\ b_3&=0.1
\end{align*}
\end{minipage}
\end{center}
\begin{figure}
\begin{center}
\begin{picture}(1,0.4)
\put(0,0.35){
\subfigure[]{\includegraphics[angle=-90,width=0.5\textwidth]
{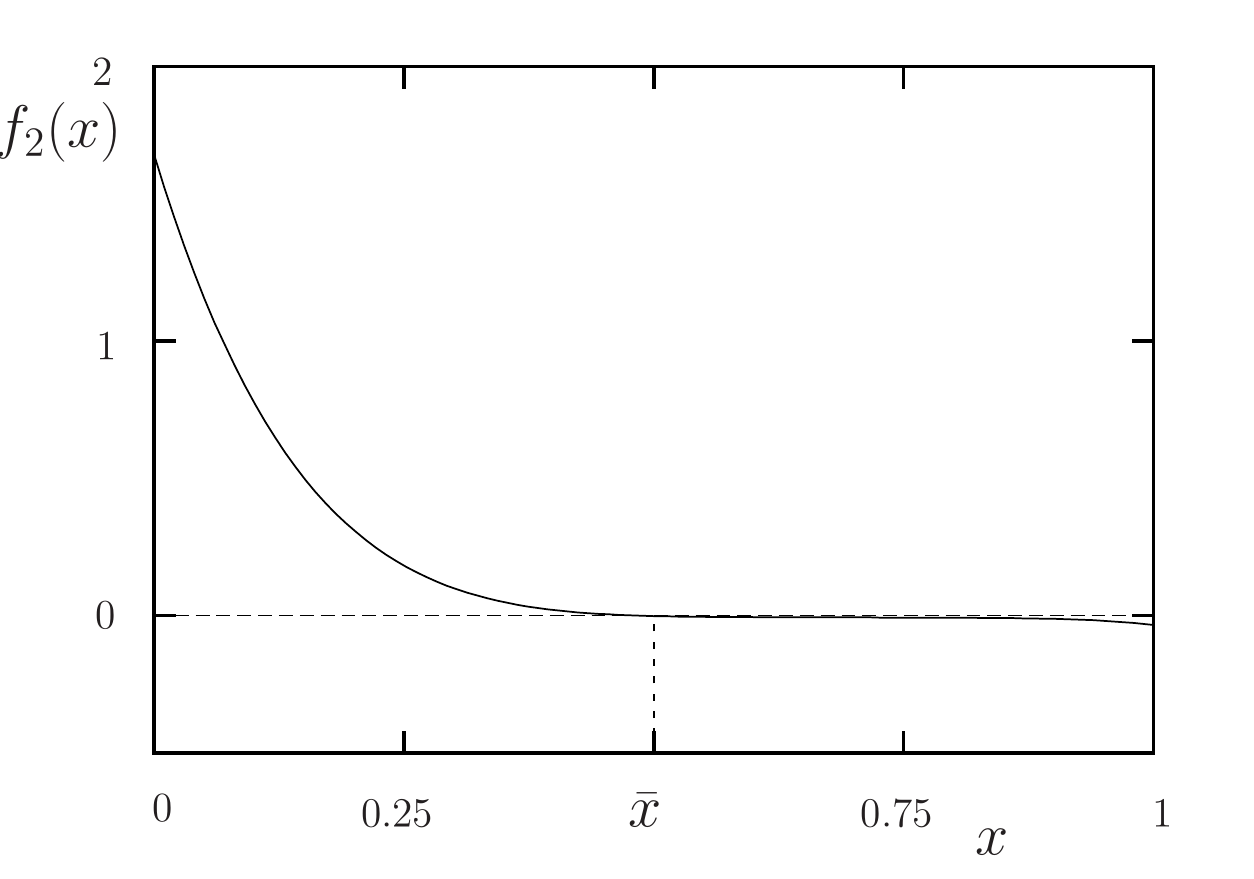}}
}
\put(0.5,0.35){
\subfigure[]{\includegraphics[angle=-90,width=0.5\textwidth]
{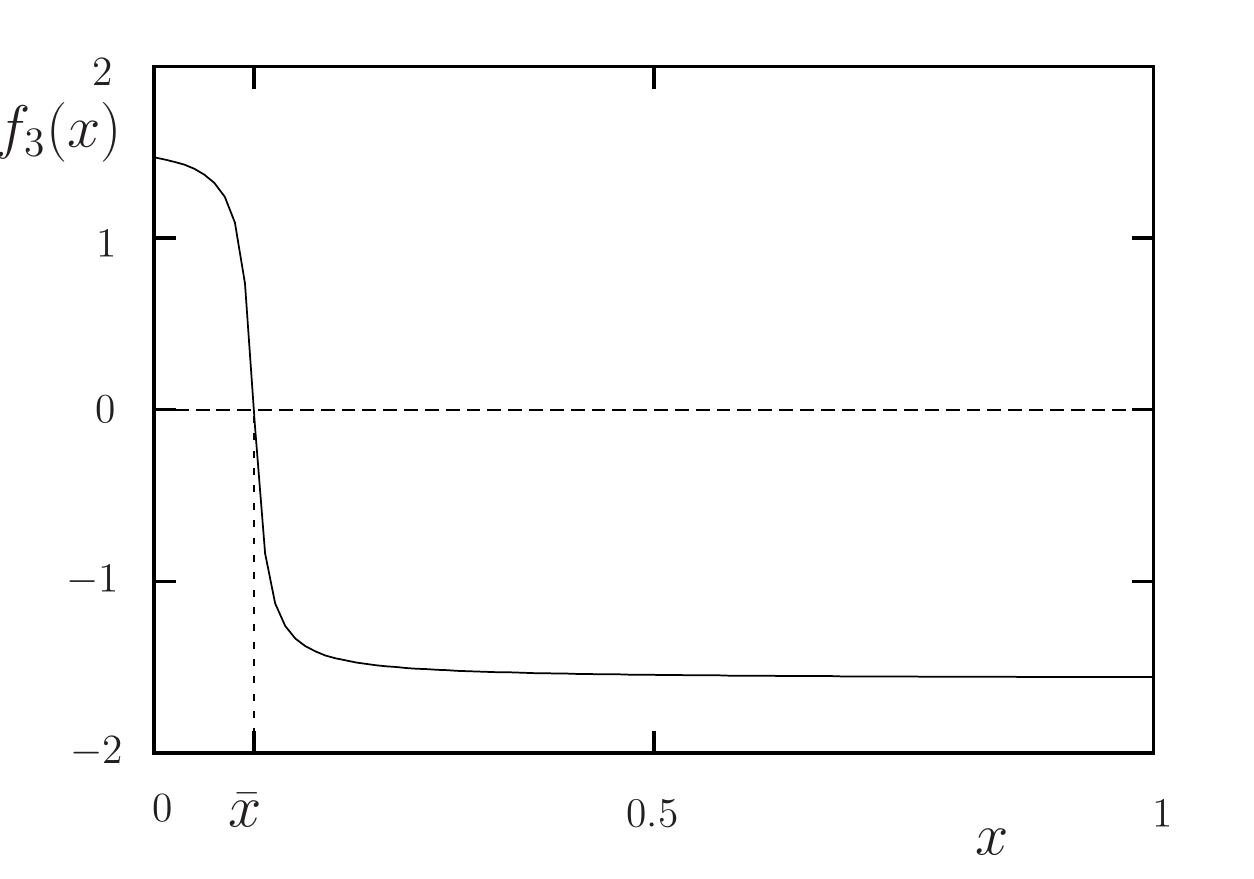}}
}
\end{picture}
\end{center}
\caption{Functions $f_2$ and $f_3$ given in eq.~\eqref{eq:functions}. $\bx$ is
the attracting equilibrium point.}
\label{fig:functions}
\end{figure}
These two functions are shown for these parameter values in
fig.~\ref{fig:functions}. There one can see that the first case corresponds to a
weak equilibrium point whereas the second one to a robust one. This leads to
slow and fast subthreshold dynamics, respectively.\\
The bifurcation structures in the parameter space $d\times 1/A$ are shown in
fig.~\ref{fig:d-invA_poly_atan}, which are as predicted. The bifurcation
scenarios along curves transversally crossing the bifurcation curves are
equivalent to the ones for the first example, as predicted by our results, and
we do not show further details.\\
The bifurcation scenario in the $d\times 1/A$ plane for the weak equilibrium
point is mainly covered by periodic orbits (bursting spiking), whereas in the
second case, the spiking dynamics is mainly given by fixed points of the
stroboscopic (tonic spiking).  This is because the subthreshold dynamics for
$f_2$ is much slower than for $f_3$. This can be seen in
fig.~\ref{fig:LRLR2_poly_atan}, where we show for both systems a $5$-periodic
orbit with symbolic sequence  $\LL\R\LL\R^2$. Note that the value of $T$ for
$f_3$ in fig.~\ref{fig:d-invA_atan} had to be reduced with respect to
fig.~\ref{fig:d-invA_poly} in order to observe bursting spiking (periodic
orbits), because, due to the fast subthreshold dynamics, for the same value of
$T$ as for $f_2$ one basically observes tonic spiking (fixed points).  When
reducing the period, the fast convergence of the subtreshold dynamics for $f_3$
is compensated and hence the regions in parameter space locating fixed points
become broauder and easier observable. We refer to~\cite{GraKruCle13b} for a
complete study of the behaviour of the bifurcation curves under frequency
variation of the input.

\begin{figure}
\begin{center}
\begin{picture}(1,0.4)
\put(0,0.35){
\subfigure[\label{fig:d-invA_poly}]{\includegraphics[angle=-90,width=0.5\textwidth]
{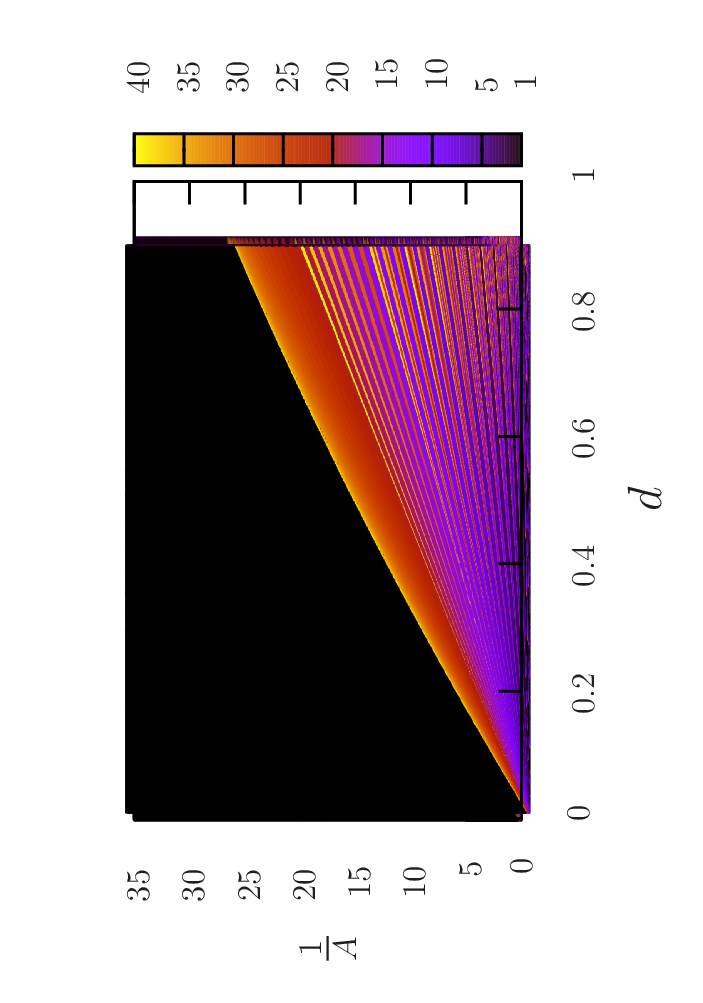}}
}
\put(0.5,0.35){
\subfigure[\label{fig:d-invA_atan}]{\includegraphics[angle=-90,width=0.5\textwidth]
{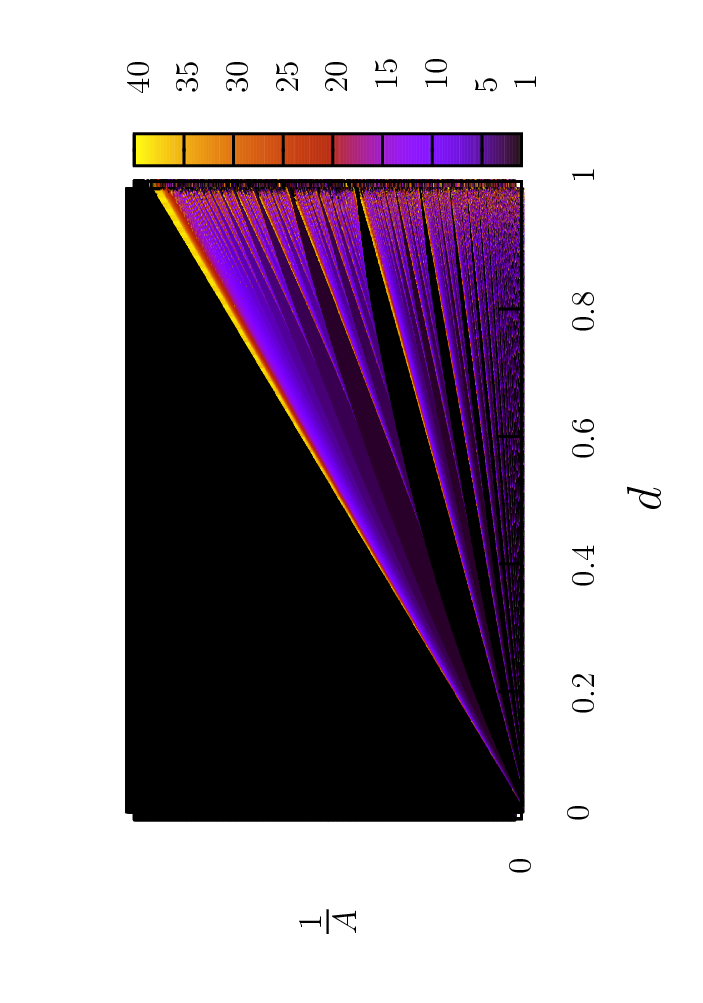}}
}
\end{picture}
\end{center}
\caption{Bifurcation scenarios for the second and third examples given in
eq.~\eqref{eq:functions}. (a) and (b) correspond to $f_2$ and $f_3$,
respectively. The values of the period $T$ have been $T=1$ for (a) and $T=0.5$
for (b).}
\label{fig:d-invA_poly_atan}
\end{figure}

\begin{figure}
\begin{center}
\begin{picture}(1,0.4)
\put(0,0.35){
\subfigure[]{\includegraphics[angle=-90,width=0.5\textwidth]
{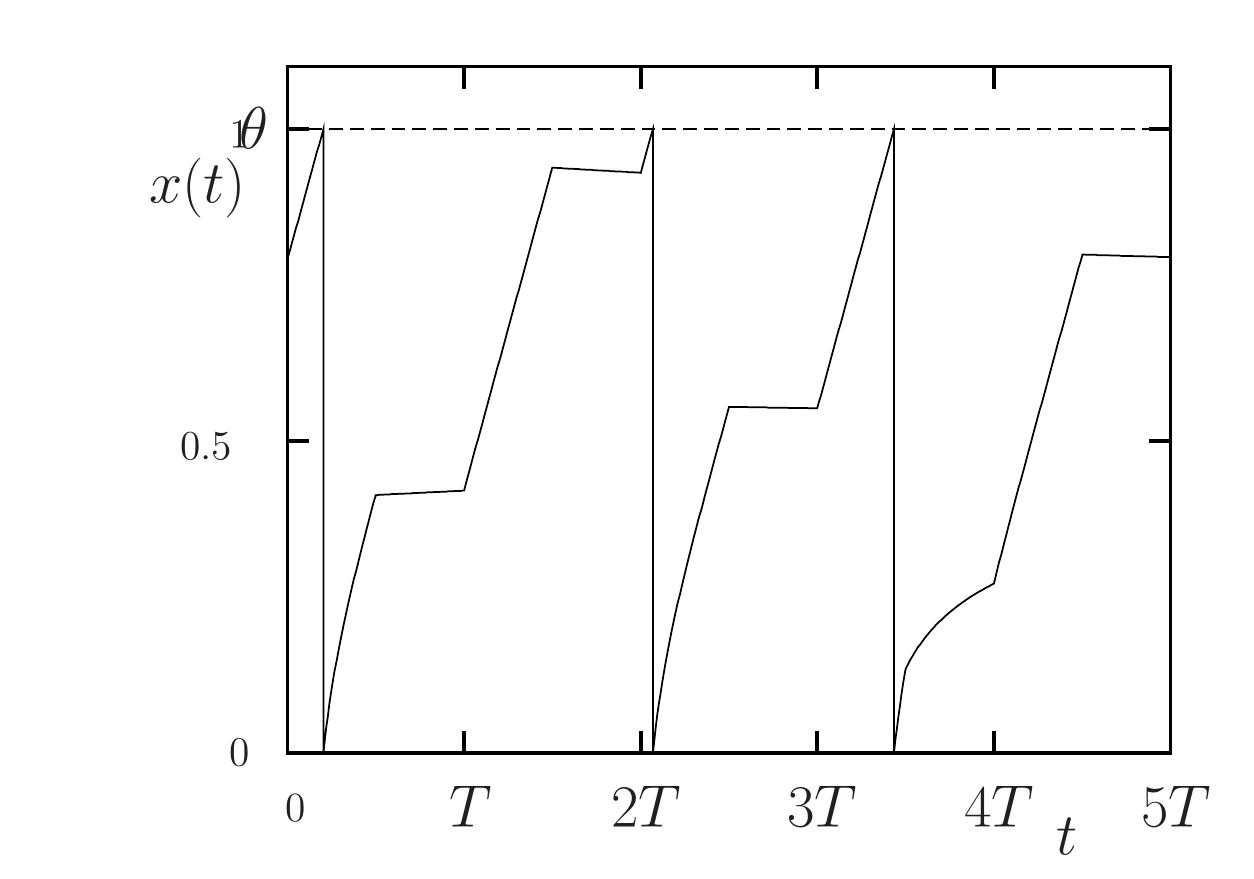}}
}
\put(0.5,0.35){
\subfigure[]{\includegraphics[angle=-90,width=0.5\textwidth]
{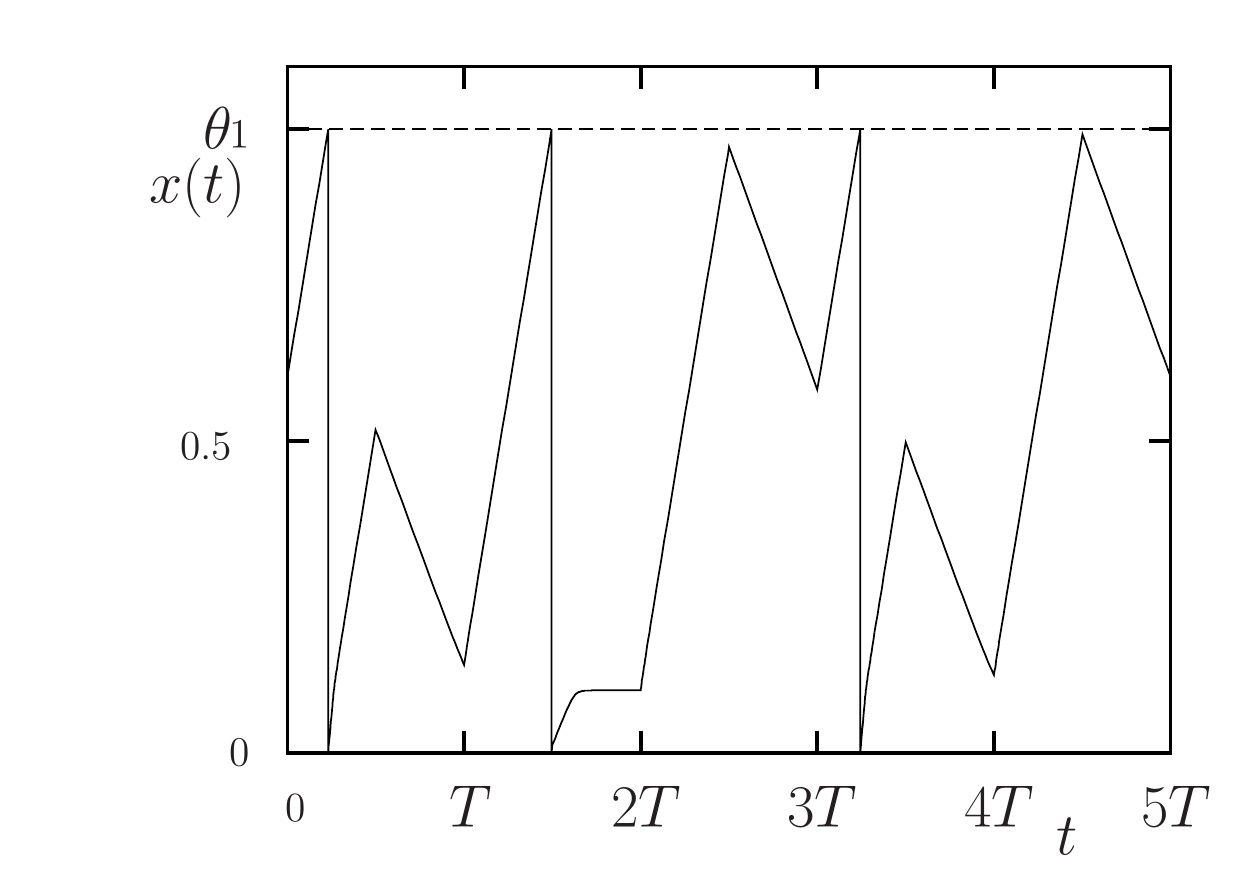}}
}
\end{picture}
\end{center}
\caption{Period-$5$ periodic orbits with symbolic sequence $\LL\R\LL\R^2$ and
firing number $\eta=3/5$ for the examples given in eq.~\eqref{eq:functions}.
(a) corresponds to  $f_2$  and (b) to $f_3$. Parameter values are $1/A=0.79$ and
$1/A=0.2$ for (a) and (b), respectively, and $d=0.5$ in both cases.}
\label{fig:LRLR2_poly_atan}
\end{figure}

\section{Discussion}
Hybrid systems with resets are a simplified version of excitable systems and are
widely used in modeling of biological systems, e.g. in the context of neuronal
activity and secretion of hormones. Typically, as such systems undergo
discontinuities (spikes), they are studied  by means of firing maps, or impact
maps, i.e. Poincar\'e maps defined using the threshold as a Poincar\'e section.  In
this work we have studied generic periodically forced hybrid systems under the
assumption of monotonicity and contracting dynamics.  The main innovation of
this paper was to use the stroboscopic (time $T$ return map) rather than the
return map.  We have obtained a complete description of the bifurcation
structure and the dynamics, showing the existence of a globally attracting
periodic orbit for every parameter value as well as giving a description of the
transitions between different regions of stable periodicity.  We have also
introduced the rotation number and shown that its typical dependence on
parameters is a devil's staircase. Finally, we defined firing rate in the
context of these systems and related it to the rotation number.\\

It is important and interesting to compare the method developed in this paper
with the approach based on the firing map. The main reason why the firing map has
been the tool of choice is that in certain contexts it becomes a regular  map;
however, the analysis relies on explicit computation of the firing times and
hence this method may not be suitable to provide general results that can be
systematically applied. Firing maps become even more difficult to use in the
presence of time dependent periodic forcings, as one needs to check for
congruency between spiking times and the period of the forcing.

The property of smoothness, which is the main advantage of the firing map, holds
automatically for systems in one dimension as the threshold is a real number
which is identified with a different real number by means of the reset.
However, in higher dimensions, (when considering dynamical adaption currents or
dynamical thresholds) the threshold becomes a codimension-one manifold and one
can find much richer dynamics. In fact, the firing map may exhibit
discontinuities near those points where the flow exhibits tangencies with the
threshold. As shown in~\cite{CooThuWed12,JimMihBroNieRub13} for two-dimensional
linear systems that can be solved explicitly, the firing map may exhibit
opposite slopes on both sides of such discontinuities. In this case, one can
apply the results for piecewise-smooth maps given in~\cite{AvrGraSch11} to
obtain a full description of the \emph{bursting} spiking dynamics (periodic
orbits of the map) and its symbolic dynamics.

The advantage of using the stroboscopic map  when considering periodically
forced systems with resets is that, after proper reparametrizations, it can be
classified as a discontinuous map with positive slopes near the discontinuity.
Such maps have been extensively studied and are well
understood~\cite{CouGamTre84,GamGleTre84,GamLanTre84,GamProThoTre86,Gam87,ProThoTre87,TurShi87,GamGleTre88,GamTre88,LyuPikZak89,AvrSch06}.
This allows us to provide general results that can be systematically applied to
systems satisfying generic conditions. Moreover, the whole analysis of the
period adding bifurcation structures comes from understanding only the
bifurcations undergone by fixed points, and is independent of the number of
spikes exhibited by the periodic orbits corresponding to these fixed points for
the time continuous system.  Finally, the presented framework is well suited for
generalizations of the considered systems. For example, a similar analysis can
be done for general periodic forcings, systems with expansive subthreshold
dynamics or even systems in higher dimensions, as could be the case of
considering a dynamical threshold.

\section*{Acknowledgements}
We would like thank Jean-Marc Gambaudo for his helpful comments on discontinuous
maps.
\newcommand{\etalchar}[1]{$^{#1}$}
\def\zh{Zh}\def\yu{Yu}\def\ya{Ya}

\end{document}